\theoremstyle{plain}
\newtheorem{theorem}{Theorem}[section]
\newtheorem{lemma}[theorem]{Lemma}
\newtheorem{proposition}[theorem]{Proposition}
\newtheorem{corollary}[theorem]{Corollary}
\theoremstyle{remark}
\newtheorem{definition}{Definition}
\newtheorem{example}{Example}[section]
\newtheorem{remark}{Remark}[section]
\begin{document}
\setcounter{section}{0}

\thispagestyle{empty}

\newcommand{\QQ}{\mathbb{Q}}
\newcommand{\RR}{\mathbb{R}}
\newcommand{\ZZ}{\mathbb{Z}}
\newcommand{\NN}{\mathbb{N}}
\newcommand{\Nor}{\mathscr{N}}
\newcommand{\CC}{\mathbb{C}}
\newcommand{\HH}{\mathbb{H}}
\newcommand{\EE}{\mathbb{E}}
\newcommand{\Var}{\operatorname{Var}}
\newcommand{\PP}{\mathbb{P}}
\newcommand{\Rd}{\mathbb{R}^d}
\newcommand{\Rn}{\mathbb{R}^n}
\newcommand{\XX}{\mathcal{X}}
\newcommand{\YY}{\mathcal{Y}}
\newcommand{\MM}{\FF}
\newcommand{\BHH}{\overline{\mathbb{H}}}
\newcommand{\XB}{( \mathcal{X},\mathscr{B} )}
\newcommand{\BB}{\mathscr{B}}
\newcommand{\system}{(\Omega,\mathcal{F},\mu,T)}
\newcommand{\FF}{\mathcal{F}}
\newcommand{\MBS}{(\Omega,\mathcal{F})}
\newcommand{\MBSE}{(E,\mathscr{E})}
\newcommand{\MS}{(\Omega,\mathcal{F},\mu)}
\newcommand{\PS}{(\Omega,\mathcal{F},\mathbb{P})}
\newcommand{\LDP}{LDP(\mu_n, r_n, I)}
\newcommand{\Def}{\overset{\text{def}}{=}}
\newcommand{\Series}[2]{#1_1,\cdots,#1_#2}
\newcommand{\independent}{\perp\mkern-9.5mu\perp}
\def\avint{\mathop{\,\rlap{-}\!\!\int\!\!\llap{-}}\nolimits}

\author[Shuo Qin]{Shuo Qin}
\address[Shuo Qin]{Courant Institute of Mathematical Sciences \& NYU-ECNU Institute of Mathematical Sciences at NYU Shanghai}
\email{sq646@nyu.edu}

\title{Recurrence and transience of multidimensional elephant random walks}

\date{}

\begin{abstract}
   We prove a conjecture by Bertoin \cite{bertoin2022counting} that the multi-dimensional elephant random walk on $\ZZ^d$($d\geq 3$) is transient and the expected number of zeros is finite. We also provide some estimates on the rate of escape. In dimensions $d= 1, 2$, we prove that phase transitions between recurrence and transience occur at $p=(2d+1)/(4d)$. 
   
   Let $S$ be an elephant random walk with parameter $p$. For $p \leq 3/4$, we provide a new version of Berry-Esseen type bound for properly normalized $S_n$. For $p>3/4$, the distribution of $\lim_{n\to \infty} S_n/n^{2p-1}$ will be studied.
  
  \end{abstract}
 \keywords{Elephant random walk, Multi-dimensional elephant random walk, P\'{o}lya's urn, Recurrence and transience, Berry-Esseen type bounds}

\maketitle

\section{General introduction}

\subsection{Definitions and main results}
\label{secdefmainre}
The elephant random walk(ERW) was introduced by Schütz and Trimper \cite{schutz2004elephants} to investigate the long-term memory effects in non-Markovian random walks. The multi-dimensional elephant random walk(MERW), which is the natural extension to a higher dimension of the ERW, is a nearest-neighbor random walk on $\mathbb{Z}^d$ $(d\geq 1)$, studied by Bercu, Laulin in \cite{bercu2019multi} and Bertenghi in \cite{bertenghi2022functional}. When $d=2$, similar models were studied in \cite{cressoni2013exact} and \cite{lyu2017residual}.

The aim of this paper is to solve some open problems raised, among others, by Bertoin \cite{bertoin2022counting} on the MERW.

For $d\geq 1$, let $e_1, \ldots, e_d$ denote the standard basis for the Euclidean space $\mathbb{R}^d$. Define a random walk $(S_n)_{n\in \NN}$ on $\ZZ^d$ as follows. First let $S_0=0 \in \ZZ^d$. Without loss of generality, we assume that $S_1=\sigma_1=e_1$ unless otherwise specified. At any time $n \geq 1$, we choose a number $n^{\prime}$ uniformly at random among the previous times $1, \ldots, n$ and set $M_{n^{\prime}}:=\left\{\pm e_1, \ldots, \pm e_d\right\} \backslash\left\{\sigma_{n^{\prime}}\right\}$. We then define a random vector $\sigma_{n+1}$ by 
\begin{equation}
  \label{lawMERWsigma}
  \mathbb{P}\left(\sigma_{n+1}=\sigma_{n^{\prime}}\right)=p, \quad \mathbb{P}\left(\sigma_{n+1}=\sigma\right)=\frac{1-p}{2 d-1} \quad \text { for all } \sigma \in M_{n^{\prime}}
\end{equation}
where $p \in [0,1]$. Now set
$$
S_{n+1}:=S_n+\sigma_{n+1}
$$
Then $(S_n)_{n\in \NN}$ is called a multi-dimensional elephant random walk(MERW) on $\ZZ^d$ with memory parameter $p$. When $d=1$, $S$ is called a one-dimensional ERW(or simply called an ERW) with memory parameter $p$. 
\begin{remark}
  \label{remdefMERW}
 (i) By definition, a MERW $S$ with parameter $p=1/(2d)$ is the simple random walk on $\ZZ^d$ with $S_1=e_1$. \\
 (ii) The ERW is related to a bounded rationality model in economics \cite{MR1268114}: suppose we have two competing technologies, at each time step, a new agent randomly asks one previous agent which technology he/she is using. Then the agent selects with probability $p$ the technology used by the previous agent and with probability $1-p$ the other one. The case $p=0$ has been introduced and considered by Arthur et al. \cite{MR0720758}. \\
 (iii) The (multi-dimensional) elephant random walk has been a fundamental example of the so-called step-reinforced random walks, see e.g. \cite{MR4116724}, \cite{bertoin2021universality}: Fix a parameter $a \in[0,1]$ and a distribution $\mu$ on $\mathbb{Z}^d$. At each time step, with probability $a$, a step-reinforced random walk repeats one of its preceding steps chosen uniformly at random, and otherwise, with complementary probability $1-a$, it has an independent increment with distribution $\mu$. As was pointed out in \cite{MR3652690} for the case $d=1$, if $\mu$ is the uniform measure on $\left\{ \pm e_1, \ldots, \pm e_d\right\}$, the step-reinforced random walk is a version of the MERW on $\mathbb{Z}^d$ with parameter $p=\frac{(2 d-1) a+1}{2 d}$.
\end{remark}

\begin{definition}
  A random walk $\left(S_n\right)_{n \in \mathbb{N}}$ on $\mathbb{Z}^d$, $d\geq 1$, is said to be recurrent, resp. transient, if for all $i \in \mathbb{Z}^d$, 
  $$\mathbb{P}(S_n=i,\ \text{infinitely often})=1, \quad \text{resp.}\quad  \mathbb{P}(S_n=i, \ \text{finitely often})=1$$ 
\end{definition}

In dimensions $d\geq 3$, Bertoin \cite{bertoin2022counting} conjectured that the expected number of zeros of a MERW $(S_n)_{n\in \NN}$, i.e. $\EE( \sum_{n=1}^{\infty}\mathds{1}_{\{S_n=0\}})$, is finite and, a fortiori, the walk is transient. We prove that conjecture and give a lower bound for the rate of escape. We write $\|x\|:=\|x\|_{L^2}$ for $x\in \ZZ^d$.
\begin{theorem}
 \label{MERWtran12d}
 Let $S=(S_n)_{n\in \NN}$ be a MERW on $\ZZ^d$ with $d\geq 3$ and memory parameter $p$. Then, for any $p \in [0,1]$, $S$ is transient. More precisely, for any $\nu \in (0,\frac{1}{2}-\frac{1}{d})$,
 $$
 \EE \left( \sum_{n=1}^{\infty}\mathds{1}_{\{\|S_n\|\leq n^{\nu}\}}\right) < \infty  
  $$
  in particular, almost surely, $\|S_n\|> n^{\nu}$ for large $n$. 
\end{theorem}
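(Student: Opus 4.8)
The plan is to deduce both transience and the quantitative escape rate from a single summability statement: it suffices to prove that
\begin{equation*}
\sum_{n=1}^{\infty}\PP\left(\|S_n\|\leq n^{\nu}\right)<\infty \qquad \text{for every } \nu\in\left(0,\tfrac12-\tfrac1d\right).
\end{equation*}
Indeed, by Tonelli this equals $\EE\big(\sum_n \mathds{1}_{\{\|S_n\|\leq n^\nu\}}\big)<\infty$, which is exactly the displayed bound; it dominates $\EE\big(\sum_n\mathds 1_{\{S_n=0\}}\big)$, settling the counting conjecture, and by Borel--Cantelli it forces $\|S_n\|>n^\nu$ for all large $n$ almost surely, so $S_n$ visits each fixed site only finitely often, i.e. $S$ is transient. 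The degenerate case $p=1$ (where $S_n=ne_1$) is immediate, so throughout I take $p\in[0,1)$.

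The reduction to a \emph{local} estimate is the next step. Writing the ball probability as a sum of point masses and bounding the number of lattice points,
\begin{equation*}
\PP\left(\|S_n\|\leq n^{\nu}\right)=\!\!\sum_{x\in\ZZ^d,\ \|x\|\leq n^{\nu}}\!\!\PP(S_n=x)\ \leq\ \#\{x\in\ZZ^d:\|x\|\leq n^{\nu}\}\cdot\max_{x\in\ZZ^d}\PP(S_n=x)\ \leq\ C_d\, n^{\nu d}\max_{x}\PP(S_n=x).
\end{equation*}
Consequently everything comes down to the anti-concentration (spreading) bound $\max_{x\in\ZZ^d}\PP(S_n=x)\leq C\, n^{-d/2}$ for $p\in[0,1)$. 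Granting this, $\PP(\|S_n\|\leq n^\nu)\lesssim n^{d(\nu-1/2)}$, and $\sum_n n^{d(\nu-1/2)}<\infty$ precisely when $d(\nu-\tfrac12)<-1$, i.e. $\nu<\tfrac12-\tfrac1d$. That the threshold produced by ``number of lattice points $\times$ maximal point mass'' coincides exactly with the range of $\nu$ in the statement is strong evidence that this is the correct route.

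It remains to prove the spreading bound, which I regard as the heart of the matter. The natural tool is Fourier inversion on the torus,
\begin{equation*}
\max_{x}\PP(S_n=x)\ \leq\ \frac{1}{(2\pi)^d}\int_{[-\pi,\pi]^d}\big|\phi_n(\xi)\big|\,d\xi,\qquad \phi_n(\xi):=\EE\,e^{i\langle \xi,S_n\rangle},
\end{equation*}
so it would suffice to establish Gaussian-type decay $|\phi_n(\xi)|\lesssim \exp(-c\,n\|\xi\|^2)$ for $\xi$ near $0$, together with a uniform bound $|\phi_n(\xi)|\leq e^{-cn}$ away from the origin of the torus; integrating then gives $\int|\phi_n|\lesssim n^{-d/2}$. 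The dynamics are explicit: conditioning on the uniformly chosen past time, writing $c(\xi):=2\sum_j\cos\xi_j$ and setting $a:=\tfrac{2dp-1}{2d-1}$, one finds $\EE[\sigma_{n+1}\mid\FF_n]=\tfrac{a}{n}S_n$, exhibiting $(S_n)$ as a $2d$-colour generalized P\'olya/Friedman urn, and one checks that $\EE[e^{i\langle\xi,\sigma_{n+1}\rangle}\mid\FF_n]=\tfrac{a}{n}\sum_{k\leq n}e^{i\langle\xi,\sigma_k\rangle}+\tfrac{1-p}{2d-1}\,c(\xi)$.

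The main obstacle is that this last identity shows the characteristic-function recursion does \emph{not} close: the conditional expectation couples $S_n$ to the empirical direction-sum $\sum_k e^{i\langle\xi,\sigma_k\rangle}$, which is the analytic signature of the long-range memory. To break this I would either (i) run the argument on the enlarged functional tracking $\phi_n$ together with the coupled empirical sum, deriving a closed recursive (or differential) inequality for its modulus; or (ii) bypass the exact recursion by a conditional-variance argument: for $p\in[0,1)$ every step carries a genuinely random choice with probability bounded below, so $\Var(\langle u,\sigma_{n+1}\rangle\mid\FF_n)$ is bounded below uniformly in the unit vector $u$, forcing the martingale part of $\langle u,S_n\rangle$ to have quadratic variation of order $n$ in every direction; feeding this into an Esseen-type inequality yields the per-direction small-ball bound, which the Fourier product structure assembles into the $d$-dimensional estimate. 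I expect the diffusive and critical regimes $a\leq\tfrac12$ to be the binding cases; in the superdiffusive regime $a>\tfrac12$ the walk already spreads on the larger scale $n^a\gg\sqrt n$, so $\max_x\PP(S_n=x)\lesssim n^{-ad}\leq n^{-d/2}$ holds with room to spare.
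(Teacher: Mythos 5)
Your reduction is sound and finds the right threshold: summability of $\PP(\|S_n\|\le n^{\nu})$ gives the expectation bound, Borel--Cantelli, and transience, and the lattice-point count shows it would suffice to prove the spreading bound $\max_{x}\PP(S_n=x)\le Cn^{-d/2}$ for $p\in[0,1)$. But that bound is precisely what you do not prove, and it is the entire difficulty: it is a local-limit-theorem type statement for a non-Markovian walk, strictly stronger than the CLT/Berry--Esseen results available for the ERW (those control ball probabilities, not point masses). Your two proposed routes both meet concrete obstructions. For route (ii): the conditional variance of $\langle u,\sigma_{n+1}\rangle$ is indeed bounded below for $p<1$, but this does not yield Gaussian decay of the characteristic function, because the peeling argument $|\EE e^{\mathrm{i} t\langle u,S_n\rangle}| \le \EE\,\bigl|\EE[e^{\mathrm{i} t\langle u,\sigma_n\rangle}\mid\FF_{n-1}]\bigr| \le 1-ct^2/4$ produces only \emph{one} contraction factor: the modulus has to be taken inside the outer expectation, which destroys the telescoping, so you cannot iterate to get $(1-ct^2/4)^n$. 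This is exactly what ``the recursion does not close'' means quantitatively, and there is no off-the-shelf Esseen-type inequality for processes whose increments carry an adapted, path-dependent drift. Moreover, even granting per-direction small-ball bounds $\PP(|\langle u,S_n\rangle|\le 1)\lesssim n^{-1/2}$, these do not assemble into $\max_x\PP(S_n=x)\lesssim n^{-d/2}$: the coordinates are dependent, and joint anti-concentration requires either integrability of the full $d$-dimensional characteristic function or some independence structure, neither of which you establish. Finally, the superdiffusive claim rests on a false heuristic: $S_n/n^{a}\to Y_d$ a.s. means the walk localizes near $n^{a}Y_d$ with fluctuations of order $\sqrt n$, so the global scale $n^{a}$ says nothing about point masses; $\max_x\PP(S_n=x)\lesssim n^{-ad}$ would require $Y_d$ to have a bounded density together with a local CLT for the fluctuations --- the former is itself a delicate result (Proposition \ref{Lnot0MERW}), not ``room to spare''.

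For contrast, the paper's proof never needs a local estimate. It couples $S$ with a d-ERW $\tilde S^{(0)}$ so that $|S_n(i)|\ge|\tilde S^{(0)}_n(i)|$ coordinatewise (Proposition \ref{coupleMERW} (iv)--(v)), and --- this is the structural device your proposal lacks --- conditionally on the direction counts $(b_n(i))$, the coordinates of the d-ERW are $d$ \emph{independent} one-dimensional ERWs with parameter $0$ (Proposition \ref{coupleMERW} (iii)). Independence allows the one-dimensional Berry--Esseen ball estimate of Theorem \ref{cltp} (a distributional bound, much weaker than a local one) to be raised to the $d$-th power, while Lemma \ref{ratecon1overd} controls the event that some $b_n(i)/n$ strays from $1/d$; summing the two contributions gives exactly the exponent $d(\tfrac12-\nu)$ your reduction asks for. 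If you want to salvage your plan, you must either genuinely prove the anti-concentration bound for the MERW (an open-ended task) or import an independence-generating mechanism such as the paper's conditioning.
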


In the case $p\geq 1/(2d)$, we give a sharper lower bound.

\begin{proposition}
    \label{rateescapMERW2d}
 Let $S$ be a MERW on $\ZZ^d$ with $d\geq 3$ and memory parameter $p\geq 1/(2d)$. Then, almost surely, for all but finitely many $n> 1$, 
 $$
\|S_n\| \geq n^{\frac{1}{2}}(\log n)^{-3}
$$
\end{proposition}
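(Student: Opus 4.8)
The plan is to run a discrete analogue of the Dvoretzky--Erd\H{o}s integral test for transient Brownian motion, organized around the reinforcement drift of the walk. First I would record the one-step conditional law. Writing $f_i^{(n)}$ for the fraction of the first $n$ steps pointing along $\pm e_i$ and setting $\alpha:=\frac{2dp-1}{2d-1}$, a direct computation from \eqref{lawMERWsigma} (using $\sum_{\sigma}\sigma=0$) gives
\begin{equation*}
\EE[\sigma_{n+1}\mid\FF_n]=\frac{\alpha}{n}S_n,\qquad \PP\!\left(\sigma_{n+1}\in\{\pm e_i\}\mid\FF_n\right)=\frac{2(1-p)}{2d-1}+\alpha f_i^{(n)}=:P_i^{(n)},
\end{equation*}
with $\sum_i P_i^{(n)}=1$. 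Since $p\ge 1/(2d)$ forces $\alpha\in[0,1]$, the drift $\frac{\alpha}{n}S_n$ points \emph{outward}, which is what makes a lower bound on $\|S_n\|$ plausible; in particular the recursion $\EE[\|S_{n+1}\|^2\mid\FF_n]=(1+\tfrac{2\alpha}{n})\|S_n\|^2+1$ already gives $\EE\|S_n\|^2\ge n$, fixing $\sqrt n$ as the correct scale.

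The heart of the argument is a potential function. I would fix a small exponent $\gamma\in(\tfrac13,d-2]$ and a regularisation $h(x):=(\|x\|^2\vee 1)^{-\gamma/2}$, and show that $h(S_n)$ is a supermartingale while $\|S_n\|$ is large. Expanding $h(S_{n+1})-h(S_n)$ to second order, the first-order term contributes $-\gamma\alpha n^{-1}\|S_n\|^{-\gamma}\le 0$ (the outward drift), while the second-order term equals $\tfrac12\gamma\|S_n\|^{-\gamma-2}\big(-1+(\gamma+2)q_n\big)$ with $q_n:=\|S_n\|^{-2}\sum_i P_i^{(n)}(S_n^{(i)})^2$. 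When the occupation of directions is balanced one has $q_n\approx 1/d$, and the bracket is $\le 0$ precisely for $\gamma\le d-2$, reproducing the Brownian threshold. \textbf{The step I expect to be the main obstacle} is controlling the \emph{anisotropic} configurations in which $q_n$ is large: the walk is non-Markovian and the $P_i^{(n)}$ depend on the whole history through $f_i^{(n)}$, so I must verify that the combined first- and second-order drift stays $\le 0$ throughout the annulus $\rho\le\|x\|\le\sqrt n$ uniformly in $p$ (the worst case being $\|x\|\ll\sqrt n$, where $\theta:=\|x\|^2/n$ is small and the helpful drift is weak). In the strongly reinforced, superdiffusive range I would bypass this altogether, since there $\|S_n\|\sim n^{\alpha}$ with $\alpha>\tfrac12$ gives the bound with enormous room; the delicate supermartingale is only needed in the diffusive and critical range.

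Granting the supermartingale, I would import from the proof of Theorem~\ref{MERWtran12d} the uniform small-ball estimate $\PP(\|S_n\|\le u)\lesssim (u/\sqrt n)^{d}$ for $1\le u\le\sqrt n$ (equivalently $\sup_x\PP(S_n=x)\lesssim n^{-d/2}$), which is exactly the ingredient producing the exponent $\tfrac12-\tfrac1d$ there. Working along the dyadic times $T_j=2^j$ with avoidance radius $\rho_j:=\sqrt{T_{j+1}}(\log T_{j+1})^{-3}$, this estimate controls the potential at the left endpoint, $\EE[h(S_{T_j})]\lesssim T_j^{-\gamma/2}$ (the contribution of $\{\|S_{T_j}\|\le\rho_j\}$ being negligible precisely because $\gamma<d$). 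Letting $\tau_j$ be the first time after $T_j$ with $\|S_n\|\le\rho_j$, optional stopping applied to $h(S_{n})$ gives
\begin{equation*}
\PP\big(\tau_j\le T_{j+1}\big)\;\le\;\rho_j^{\gamma}\,\EE[h(S_{T_j})]\;\lesssim\;(\log T_j)^{-3\gamma}\;\asymp\;j^{-3\gamma}.
\end{equation*}

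Since $\gamma>\tfrac13$, these probabilities are summable, so by Borel--Cantelli almost surely $\tau_j>T_{j+1}$ for all large $j$. Finally, because $t\mapsto\sqrt t\,(\log t)^{-3}$ is eventually increasing, avoiding the ball of radius $\rho_j=\max_{n\in[T_j,T_{j+1}]}\sqrt n(\log n)^{-3}$ on the whole block forces $\|S_n\|>\rho_j\ge\sqrt n(\log n)^{-3}$ for every $n$ in $[T_j,T_{j+1}]$, which is the assertion. The two quantitative inputs, the exponent $d$ in the small-ball bound and the exponent $3$ in the logarithm, combine only through the single requirement $3\gamma>1$; the generous power $3$ is what lets one fixed $\gamma>\tfrac13$ serve simultaneously for every $d\ge 3$.
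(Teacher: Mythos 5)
Your skeleton (the potential $h(x)=(\|x\|^2\vee 1)^{-\gamma/2}$, dyadic blocks, small-ball control of $\EE[h(S_{T_j})]$, optional stopping, Borel--Cantelli, monotonicity of $\sqrt{t}(\log t)^{-3}$) is sound and close in spirit to the paper's argument, which also runs a Lyapunov function $\|x\|^{-\gamma}$ (with $\gamma=1/2$) through optional stopping and Borel--Cantelli, though the paper feeds in the expected exit-time bound $\EE\zeta_m\le 6(m+1)^2$ of Proposition \ref{estexpectMERWhit}(i) and last-exit times $\eta_x$ where you feed in small-ball estimates at dyadic times. The problem is the step you yourself flag as ``the main obstacle'': it is not something one can ``verify,'' because the claimed supermartingale property is \emph{false} as a pathwise statement. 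Consider a history in which every step so far lies along the first axis, $f_1^{(n)}=1$, while the walk has oscillated so that $\|S_n\|\ll\sqrt{n}$. Then $q_n=P_1^{(n)}=a+\tfrac{2(1-p)}{2d-1}$, which for $d=3$, $p=p_3=7/12$ equals $2/3$, so the bracket $-1+(\gamma+2)q_n$ is strictly positive for \emph{every} $\gamma>0$; and since the outward drift term has relative size $a\|S_n\|^2/n\ll 1$, it cannot compensate. So there is no drift inequality holding uniformly over histories on the annulus $\rho\le\|x\|\le\sqrt{n}$, and ``granting the supermartingale'' grants something untrue. The missing idea is a mechanism for discarding unbalanced histories: introduce the stopping time $T_j':=\inf\{n\ge T_j:\ \sum_{i}|b_n(i)/n-1/d|\ge n^{-1/4}\}$, prove the block estimate $\PP(\tau_j\le T_{j+1},\,T_j'=\infty)\lesssim j^{-3\gamma}$ for the process stopped at $\tau_j\wedge T_j'\wedge T_{j+1}$, and then invoke the almost-sure rate $n^{1/4}\,(b_n(i)/n-1/d)\to 0$ of Lemma \ref{ratecon1overd} to conclude that a.s. $T_j'=\infty$ for all large $j$, so Borel--Cantelli still applies to the intersected events. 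This is exactly the role of the stopping times $T_k$ in the paper's proof, and Lemma \ref{ratecon1overd} is a separate stochastic-approximation estimate that does not follow from your one-step computations.

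Three smaller points. First, your bypass of the superdiffusive regime via $\|S_n\|\sim n^{a}$ silently uses $\PP(Y_d=0)=0$, i.e. Proposition \ref{Lnot0MERW}; the convergence (\ref{superlimY}) alone does not exclude $Y_d=0$, so cite it (it is proved independently, so there is no circularity). Second, the small-ball bound you import from the proof of Theorem \ref{MERWtran12d} also carries an additive error $\PP\bigl((b_n(i))_{1\le i\le d}\notin B_n(\varepsilon)\bigr)$ coming from the same balancedness issue; it is superpolynomially small by Lemma \ref{ratecon1overd}, hence harmless, but it must be carried along, and your parenthetical ``equivalently $\sup_x\PP(S_n=x)\lesssim n^{-d/2}$'' is a strictly stronger local bound that neither you nor the paper proves (you do not need it). Third, at the endpoint $\gamma=d-2$ the balanced-case bracket is exactly $0$, so the imbalance and Taylor remainders can push it positive; take $\gamma$ strictly inside $(1/3,d-2)$ so the drift is bounded away from $0$, as the paper does with $\gamma=1/2$ and the margin $\tfrac12-\tfrac{5}{4d}>0$ for $d\ge 3$.
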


The two-dimensional case is considered as a challenging problem by Bertoin \cite{bertoin2022counting}, as even in the diffusive regime, one cannot conclude that recurrence holds because of the failure of the Markov property, and a finer analysis is required. We prove that there is a phase transition between recurrence and transience at $p=5/8$. 

\begin{theorem}
  \label{phasetranZ2MERW}
  The MERW on $\ZZ^2$ with memory parameter $p$ is recurrent for $p<5/8$ and transient for $p \geq 5/8$.
\end{theorem}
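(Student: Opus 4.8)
The plan is to exploit the step-reinforced representation of the MERW, which makes the threshold $p=5/8$ transparent: with probability $1-a$ the walk takes a fresh uniform step in $\{\pm e_1,\pm e_2\}$, and with probability $a:=(4p-1)/3$ it repeats a uniformly chosen past step, so that $p=5/8$ is exactly $a=1/2$. From this representation I would set up the genealogical decomposition $S_n=\sum_j F_j(n)\,\xi_j$, where the $\xi_j$ are i.i.d.\ uniform on $\{\pm e_1,\pm e_2\}$ and are independent of the family sizes $F_j(n)$ (the number of steps descending from the $j$-th fresh innovation), the latter evolving as a P\'olya-type urn. Conditionally on the genealogy $\mathcal{G}=\sigma(\{F_j(n)\}_{j,n})$, the vector $S_n$ is a sum of independent centered lattice vectors, so its conditional covariance is $\tfrac12\Sigma_n I$ with $\Sigma_n=\sum_j F_j(n)^2$, and the known urn asymptotics give $\Sigma_n\asymp n$ for $a<1/2$, $\Sigma_n\asymp n\log n$ for $a=1/2$, and $\Sigma_n\asymp n^{2a}$ for $a>1/2$; by the sign- and swap-symmetries the two coordinates are uncorrelated, hence asymptotically independent.

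The regime $p>5/8$ is the easy one. Projecting the urn onto an eigenvalue-$a$ direction produces the martingale $S_n/\gamma_n$ with $\gamma_n\sim c\,n^{a}$ and $a>1/2$, which is bounded in $L^2$, so $S_n/n^{a}\to L\in\RR^2$ almost surely. I would then show that $L$ carries no atom at the origin (nondegeneracy of the superdiffusive limit law), from which $\|S_n\|\to\infty$ almost surely and transience follows at once, just as in the bound behind Theorem~\ref{MERWtran12d} but now without needing a summable Green's function.

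The two genuinely hard cases, recurrence for $p<5/8$ and transience at $p=5/8$, both rest on a \emph{conditional} local limit theorem. Using the Berry--Esseen/LCLT machinery I would prove that, conditionally on $\mathcal{G}$, $\PP(S_n=0\mid\mathcal{G})\approx C/\Sigma_n$ with explicit error control, and integrate to get $\PP(S_n=0)\asymp 1/\Var(S_n^{(1)})$. This gives $\sum_n\PP(S_n=0)=\infty$ in \emph{both} the diffusive and the critical regimes, so the expected number of visits to the origin diverges in both; consequently neither recurrence nor transience can be read off the Green's function, and the entire problem reduces to controlling the two-time correlations $\PP(S_m=0,\,S_n=0)$.

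For $p<5/8$ I would run a second-moment argument on $V_N=\sum_{n\le N}\mathds{1}_{\{S_n=0\}}$: splitting $S_n$ into the part carried by families already present at time $m$ and a conditionally independent later increment, I expect a bound $\EE[V_N^2]\le C(\EE V_N)^2$, hence by Paley--Zygmund a uniform lower bound $\PP(\exists\,n\in[m,m^2]:S_n=0\mid\FF_m)\ge c>0$; a conditional Borel--Cantelli over dyadic scales, using that the urn composition stays asymptotically balanced so the restart probability does not degrade, then forces infinitely many returns almost surely. For $p=5/8$ the same correlation estimates are pushed the other way: although $\EE[V_N]\to\infty$, a return at time $m$ forces the shared early families to cancel and thereby strongly biases later returns, so $V_N$ should equal zero with probability tending to one, which I would make precise through a truncated first/second-moment comparison proving $\PP(S_n=0\text{ for some }n\ge N)\to0$. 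The main obstacle throughout is precisely this sharp two-time estimate for $\PP(S_m=0,\,S_n=0)$: the non-Markovian memory keeps $S_m$ and $S_n$ correlated through their common early families, and quantifying that correlation tightly enough to separate the (recurrent) diffusive case from the (transient) critical case — two regimes with the same divergent Green's function — is where the argument is most delicate.
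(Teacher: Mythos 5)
Your route is genuinely different from the paper's (a genealogical/Fourier program versus Lyapunov functions and martingales), and you identify the threshold correctly, but there are gaps in both hard directions, and the one at $p=5/8$ is fatal to the strategy itself. Granting your conditional local limit theorem, at criticality $\PP(S_n=0)\asymp 1/(n\log n)$ along even times, so $\sum_{n\ge N}\PP(S_n=0)=\infty$ for \emph{every} $N$. Consequently no ``truncated first/second-moment comparison'' can produce $\PP(\exists\, n\ge N:\ S_n=0)\to 0$: there is no union bound to truncate, and showing that returns cluster (your ``a return at time $m$ biases later returns'') is perfectly compatible with recurrence --- the planar simple random walk has a divergent Green's function and strongly clustered returns, yet is recurrent. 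Separating the critical case from the diffusive one requires a \emph{pathwise escape estimate}, not a correlation estimate, and this is exactly the idea missing from your proposal. It is what the paper proves: Proposition \ref{z258logn} shows $\log\|S_n\|^2/\log n\to 1$ a.s.\ at $p=5/8$, via an almost-submartingale analysis of $x_n=\log(\|S_n\|^2+n^{4/5})/\log n$, where the drift term $a\|S_n\|^2/n$ with $a=1/2$ is just strong enough to make the negative parts of the conditional increments summable, and the limit is pinned to $1$ by the critical law of the iterated logarithm (\ref{ltilScrit}); transience then follows from the dichotomy of Proposition \ref{merw01}.

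There are three further gaps. First, the step-reinforced representation underlying your whole decomposition requires $a=(4p-1)/3\in[0,1]$, i.e.\ $p\ge 1/4$ (Remark \ref{remdefMERW}); recurrence for $p\in[0,1/4)$ is part of the theorem but lies entirely outside your framework. Second, for $p<5/8$ the bound $\EE[V_N^2]\le C(\EE V_N)^2$, and worse its conditional analogue given $\FF_m$ (which your conditional Borel--Cantelli needs), are not proved and constitute the entire difficulty: conditionally on $S_m=0$ the walk keeps re-sampling the now-constrained first $m$ steps, and no relation of the form $\PP(S_n=0\mid S_m=0)\le C\,\PP(S_{n-m}=0)$ is available for this non-Markovian process. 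The paper avoids two-time estimates altogether, using the Lyapunov function $f(x)=\sqrt{\log\|x\|}$, optional stopping between the scales $n^s$ and $n^t$ with $a<s<t<1/2$, the uniform conditional bound (\ref{inequtkcapTk}), and Levy's 0--1 law together with Proposition \ref{merw01}. Third, for $p>5/8$ you assert that the superdiffusive limit has no atom at $0$; what is known from (\ref{superlimY}) is only $\PP(Y_2=0)<1$, and since Proposition \ref{merw01} is a dichotomy rather than a 0--1 law for each alternative, positive escape probability does not imply transience. The no-atom statement is precisely Proposition \ref{Lnot0MERW}, whose proof (continuous-time embedding, the characteristic-function ODE (\ref{wcharode}), and the decay estimate of Proposition \ref{wdensity}) is a substantial part of the paper and cannot be quoted as known.
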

\begin{remark}
Recently, Curien and Laulin \cite{curien2023recurrence} gave a new proof of the recurrence for $p<5/8$.
\end{remark}

In dimension 1, we retrieve a result by Coletti and Papageorgiou \cite{coletti2021asymptotic} on the phase transition at $p=3/4$. Note that their proof is incorrect, as we explain in Remark \ref{p34proincorrect}.

\begin{theorem}
  \label{phasetranERWZ}
  The ERW on $\ZZ$ with parameter $p$ is recurrent for $p\leq 3/4$ and transient for $p>3/4$.
\end{theorem}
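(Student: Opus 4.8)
The plan is to reduce recurrence to a two-sided oscillation statement and to read both phases off a single martingale, since the non-Markovian nature of $S$ rules out the usual recurrence criteria. As the increments $\sigma_{n+1}=S_{n+1}-S_n$ take values in $\{+1,-1\}$, the path of $(S_n)$ cannot jump over an integer; hence if I can show that almost surely $\limsup_n S_n=+\infty$ and $\liminf_n S_n=-\infty$, then by the discrete intermediate value property every $i\in\ZZ$ is visited infinitely often and the walk is recurrent. Conversely, transience will follow once I show $|S_n|\to\infty$ almost surely. Everything is driven by the drift identity $\EE[\sigma_{n+1}\mid\FF_n]=\frac{2p-1}{n}S_n$, which makes $M_n:=S_n/a_n$ a martingale for the deterministic weights $a_1=1$, $a_{n+1}=\frac{n+2p-1}{n}a_n$, so that $a_n=\Gamma(n+2p-1)/(\Gamma(2p)\Gamma(n))\sim n^{2p-1}/\Gamma(2p)$.

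Next I would compute the conditional quadratic variation. A short computation gives $\Delta M_n=a_{n+1}^{-1}\big(\sigma_{n+1}-\frac{2p-1}{n}S_n\big)$, and since $\sigma_{n+1}^2=1$ with $\frac{2p-1}{n}S_n=\EE[\sigma_{n+1}\mid\FF_n]$,
\[
\EE[(\Delta M_n)^2\mid\FF_n]=\frac{1}{a_{n+1}^2}\Big(1-\frac{(2p-1)^2}{n^2}S_n^2\Big).
\]
Because $|S_n|\le n$ and $(2p-1)^2<1$, this is bounded below by $(1-(2p-1)^2)a_{n+1}^{-2}$, and since $a_{n+1}^{-2}\asymp n^{2-4p}$ the series $\sum_n a_{n+1}^{-2}$ diverges exactly when $p\le 3/4$ and converges when $p>3/4$: this is the arithmetic source of the phase transition. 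For $p\in[1/2,3/4]$ the weights satisfy $a_{n+1}\ge a_1=1$, so $|\Delta M_n|\le 2$, and $M_n$ is a martingale with uniformly bounded increments and $\langle M\rangle_\infty=\infty$ almost surely; the standard dichotomy for bounded-increment martingales (convergence on $\{\langle M\rangle_\infty<\infty\}$, oscillation between $\pm\infty$ on its complement) then forces $\limsup_n M_n=+\infty$ and $\liminf_n M_n=-\infty$, and since $a_n\ge 1$ the same holds for $S_n=M_na_n$. This yields recurrence including the critical case $p=3/4$, where the divergence is merely $\sum 1/n$ and hence costs nothing. For $p>3/4$ the same martingale has $\langle M\rangle_\infty<\infty$, is $L^2$-bounded, and converges almost surely and in $L^2$ to a limit $M_\infty$ with $\EE[M_\infty]=1$.

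The transience direction for $p>3/4$ then reduces to showing $\PP(M_\infty=0)=0$: on $\{M_\infty\neq0\}$ one has $|S_n|=|M_n|a_n\sim|M_\infty|a_n\to\infty$, so the walk visits each point finitely often. I would obtain the absence of an atom at $0$ from the distributional analysis of $\lim_n S_n/n^{2p-1}$ announced in the abstract (the limit possesses a density), or directly from a conditioning argument showing that the conditional law of $\sum_{k\ge n}\Delta M_k$ given $\FF_n$ is non-atomic.

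The main obstacle is the anti-persistent regime $p<1/2$, where $a_n\to0$ so that $|\Delta M_n|\asymp n^{1-2p}$ is unbounded and the elementary dichotomy no longer applies; moreover the Doob drift of $S_n$ is of the same order $\sqrt n$ as its martingale part and cannot simply be discarded. Here I would still work with $M_n$ but invoke the martingale law of the iterated logarithm: since $\langle M\rangle_n\asymp n^{3-4p}\to\infty$ while $|\Delta M_n|/\sqrt{\langle M\rangle_n}\asymp n^{-1/2}\to0$ verifies the Lindeberg/Lyapunov hypothesis, one gets $\limsup_n M_n/\sqrt{2\langle M\rangle_n\log\log\langle M\rangle_n}=1$ together with the symmetric lower bound. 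Evaluating $S_n=M_na_n$ along the extreme times, where $a_n\sqrt{\langle M\rangle_n}\asymp\sqrt n\to\infty$, yields $\limsup_n S_n=+\infty$ and $\liminf_n S_n=-\infty$, hence recurrence for $p<1/2$ as well (the degenerate value $p=0$, where $a_2=0$, being handled by a direct oscillation argument). Verifying the LIL hypotheses uniformly in $p$ and establishing the non-atomicity of $M_\infty$ are the two steps I expect to require the most care.
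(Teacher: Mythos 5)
Your recurrence argument for $p\leq 3/4$ is correct, and it is essentially a self-contained version of what the paper does: the paper disposes of this half by citing the law of the iterated logarithm for the ERW (Bercu; Coletti--Gava--Sch\"utz), whereas you rederive the two-sided oscillation $\limsup_n S_n=+\infty$, $\liminf_n S_n=-\infty$ from the weighted martingale $M_n=S_n/a_n$, via the bounded-increment convergence/oscillation dichotomy when $a_n\geq 1$ (i.e.\ $p\in[1/2,3/4]$, including the critical case) and Stout's martingale LIL when $p<1/2$. The reduction ``oscillation between $\pm\infty$ implies every site is visited infinitely often'' is valid for a nearest-neighbour walk, so no zero--one law is needed on this side; your variance computation does give the deterministic two-sided bounds on $\langle M\rangle_n$ needed for Stout's hypotheses. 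Two minor caveats: the case $p=0$ genuinely escapes this scheme (there $a_n$ vanishes and $1-(2p-1)^2=0$, so both your lower variance bound and the normalization degenerate) and needs the separate argument you only allude to; and at $p=1$ transience is trivial and should be stated as such.

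The transience direction, however, has a genuine gap. Your reduction is the right one --- and it is exactly the paper's: with $M_\infty=\lim_n S_n/a_n=\Gamma(2p)Y_1$ in the notation of (\ref{superlimY}), almost-sure transience is equivalent to $\PP(M_\infty=0)=0$, because $\EE M_\infty=1$ only gives $\PP(M_\infty\neq 0)>0$, i.e.\ transience with positive probability, and no zero--one law is available for this non-Markovian walk: Proposition \ref{merw01} only says that a.s.\ either all sites are visited infinitely often or all finitely often, not that either alternative has trivial probability. But you do not prove $\PP(M_\infty=0)=0$. Your first suggestion --- to quote ``the distributional analysis of $\lim_n S_n/n^{2p-1}$ announced in the abstract'' --- is circular, since that analysis is this paper's own Proposition \ref{Lnot0MERW}, i.e.\ precisely the statement at issue. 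Your second suggestion, a ``conditioning argument showing that the conditional law of $\sum_{k\geq n}\Delta M_k$ given $\FF_n$ is non-atomic,'' is undeveloped and is not a routine step: nothing soft rules out an atom at $0$ for a reinforced limit, and Remark \ref{p34proincorrect} records that a published proof of exactly this density claim was wrong. The paper's actual argument occupies Section \ref{secLneq0}: embed the walk in a continuous-time branching process, use the independence of $Y_1$ and the exponential limit $\xi$ to pass to $W_1=Y_1\xi^{2p-1}$, derive the characteristic-function ODE (\ref{wcharode}) from a branching fixed-point identity, and prove the decay estimate $\sup_x|x^{1/a}\varphi(x)|<\infty$, which gives $\varphi\in L^1$, hence a density for $W_1$ and $\PP(Y_1=0)=0$. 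Some argument of this strength (or an appeal to the earlier literature mentioned in Remark \ref{fixedpointpaper}(ii)) is indispensable; as written, your proposal establishes recurrence for $p\leq 3/4$ but only transience with positive probability for $p>3/4$.
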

\begin{remark}
Bertoin proved in Theorem 4.1 \cite{bertoin2022counting} that the ERW is positive recurrent $($i.e. the expectation of $\inf\{n\geq 1: S_n=0\}$ is finite$)$ if and only if $p<1/4$. 
\end{remark}

The recurrence for $p\leq 3/4$ could already be deduced from the law of the iterated logarithm for the ERW by Bercu \cite{bercu2017martingale} and Coletti, Gava and Schütz \cite{coletti2017strong}, as was pointed out by Coletti and Papageorgiou in \cite{coletti2021asymptotic}. 

Also, for a MERW $S$ on $\ZZ^d$, it has been shown by Bercu \cite{bercu2017martingale} and Coletti, Gava and  Schütz \cite{coletti2017central} in dimension 1, Bercu and Laulin \cite{bercu2019multi} in any dimension $d\geq 1$ that if $p>(2d+1)/(4d)$, then
\begin{equation}
  \label{superlimY}
  \lim _{n \rightarrow \infty} \frac{ S_n}{n^{a(d,p)}}=Y_d \quad a.s., \ \text{with}\ a(d,p):=\frac{2 d p-1}{2 d-1}
\end{equation}
where the limit $Y_d$ is a random vector such that $\PP(Y_d=0)<1$. In particular, $S$ goes to infinity with positive probability for $p>(2d+1)/(4d)$. We write $a=a(d,p)$ when there is no ambiguity and let
\begin{equation}
  \label{pddef}
  p_d:=\frac{2d+1}{4d}
\end{equation}
Note that $a<1/2 \Leftrightarrow p<p_d$, $a=1/2 \Leftrightarrow p=p_d$, and  $a>1/2 \Leftrightarrow p> p_d$

Our proof of transience in Theorem \ref{phasetranZ2MERW} and Theorem \ref{phasetranERWZ} relies on those results, as we show that for any $p>p_d$, $Y_d\neq 0$ a.s.. 

\begin{proposition}
   \label{Lnot0MERW}
  Let $S$ be a MERW on $\ZZ^d$ with parameter $p >p_d$. Then, $\PP(Y_d=0)=0$, where $Y_d$ is defined in (\ref{superlimY}). In particular, $S$ is transient.
\end{proposition}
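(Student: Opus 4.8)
The plan is to realize $Y_d$ as the a.s. limit of a martingale and then reduce the statement to a single uniform non-degeneracy estimate. Since $\{\pm e_1,\dots,\pm e_d\}$ sums to $0$, a direct computation from \eqref{lawMERWsigma} gives, writing $N_v(n)=\#\{k\le n:\sigma_k=v\}$,
\[
\PP(\sigma_{n+1}=v\mid\mathcal{F}_n)=\frac{1-p}{2d-1}+a\,\frac{N_v(n)}{n}\qquad (v\in\{\pm e_1,\dots,\pm e_d\}),
\]
whence $\EE[\sigma_{n+1}\mid\mathcal{F}_n]=\tfrac{a}{n}S_n$ and $\EE[S_{n+1}\mid\mathcal{F}_n]=(1+a/n)S_n$. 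Setting $\gamma_n:=\Gamma(n)\Gamma(1+a)/\Gamma(n+a)\sim\Gamma(1+a)\,n^{-a}$, the $\RR^d$-valued process $M_n:=\gamma_nS_n$ is a martingale and, comparing with \eqref{superlimY}, $M_n\to\Gamma(1+a)\,Y_d$ a.s., so $\{Y_d=0\}=\{M_\infty=0\}$ up to a null set. I would work entirely with $M_n$.

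The decisive feature is uniform ellipticity: for $p<1$ the displayed probabilities are all bounded below by $q:=(1-p)/(2d-1)>0$ (the case $p=1$ is trivial, $S_n=ne_1$). Writing $\Delta_n=M_{n+1}-M_n=\gamma_{n+1}(\sigma_{n+1}-\tfrac{a}{n}S_n)$, this yields a two-sided bound $c_0 I\preceq\mathrm{Cov}(\sigma_{n+1}\mid\mathcal{F}_n)\preceq I$ with $c_0=c_0(p,d)>0$, the lower bound coming from the fact that each axis direction carries conditional mass at least $q$. Because $a>1/2$ we have $\sum_n\gamma_{n+1}^2<\infty$, so $M_n$ is $L^2$-bounded; moreover the deterministic bound $\sum_{k\ge n}\|\Delta_k\|^2\le Cw_n$, with $w_n:=\sum_{k\ge n}\gamma_{k+1}^2\sim c\,n^{1-2a}$, gives through Burkholder–Davis–Gundy the conditional estimate $\EE[\|M_\infty-M_n\|^4\mid\mathcal{F}_n]\le C'w_n^2$, while the lower covariance bound gives $\EE[\|M_\infty-M_n\|^2\mid\mathcal{F}_n]\ge c_0 d\,w_n$.

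By Lévy's $0$–$1$ law, $\PP(M_\infty=0\mid\mathcal{F}_n)\to\mathds{1}_{\{M_\infty=0\}}$, so it suffices to prove a uniform bound $\PP(M_\infty=0\mid\mathcal{F}_n)\le 1-\delta$ with $\delta>0$ independent of $n$. This is straightforward at the two extreme scales. If $\|M_n\|^2\ge 4w_n$, Chebyshev applied to $\|M_\infty-M_n\|$ shows $M_\infty\ne0$ with conditional probability $\ge 3/4$. If $\|M_n\|^2\le\tfrac{c_0 d}{2}w_n$, a conditional Paley–Zygmund argument from the two moment bounds closes it: the second moment of $\|M_\infty-M_n\|^2\mathds{1}_{\{M_\infty\ne0\}}$ is forced to be $\gtrsim w_n$ while its fourth moment is $\lesssim w_n^2$, so $\PP(M_\infty\ne0\mid\mathcal{F}_n)\ge\delta$ by Cauchy–Schwarz.

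The main obstacle is the intermediate scale $\|M_n\|\asymp\sqrt{w_n}$, which genuinely occurs on $\{Y_d=0\}$ (there $\|S_n\|$ may be of diffusive order $n^{1/2}$, i.e.\ exactly $\|M_n\|\asymp\sqrt{w_n}$), and where the moment method cannot by itself exclude that the conditional law of $M_\infty-M_n$ concentrates at the single point $-M_n$. Overcoming this requires a genuine anti-concentration (no-atom) input, and here I would invoke the P\'olya-urn structure named in the abstract: tracking $(N_v(n))_v$ exhibits $S_n$ as the large-eigenvalue projection of a $2d$-colour generalized P\'olya urn whose mean replacement matrix has principal eigenvalue $1$ and secondary eigenvalue $a>1/2$, and $Y_d$ is precisely the associated normalized limit. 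To show this limit has no atom at $0$, I would exploit the independent randomness supplied by ellipticity: write the conditional law of each step as a mixture that, with probability $2dq$, resamples a uniformly random axis direction independently of the past; flipping the sign of such a \emph{fresh} step and propagating the reflection $R_j:e_j\mapsto-e_j$ along its entire genealogical subtree produces, by equivariance of the dynamics, two equally likely trajectories whose limits differ by $2e_j$ times the normalized signed mass of that subtree. Feeding in infinitely many such independent sign bits precludes concentration of $M_\infty$ at any point, giving the uniform bound $\PP(M_\infty=0\mid\mathcal{F}_n)\le 1-\delta$ and hence $\PP(Y_d=0)=0$. Transience then follows at once: since $Y_d\ne0$ a.s.\ and $a>0$, almost surely $\|S_n\|\ge\tfrac12\|Y_d\|\,n^{a}\to\infty$, so every site is visited only finitely often.
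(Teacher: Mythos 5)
Your rigorous part — the martingale $M_n=\gamma_nS_n$, the two-sided conditional moment bounds, the L\'evy $0$--$1$ reduction to a uniform bound on $\PP(M_\infty=0\mid\FF_n)$, and the Chebyshev/Paley--Zygmund estimates at the two extreme scales — is essentially sound (modulo constants: with $\EE[\|M_\infty-M_n\|^2\mid\FF_n]\le Cw_n$ your threshold $\|M_n\|^2\ge 4w_n$ must be $\|M_n\|^2\ge 4Cw_n$, and the covariance lower bound $c_0 I\preceq\mathrm{Cov}(\sigma_{n+1}\mid\FF_n)$ needs the small argument that picks a coordinate $i$ with $|u_i|\ge 1/\sqrt d$ and uses mass at least $q$ on \emph{both} $\pm e_i$). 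But, as you yourself observe, this machinery cannot close the intermediate regime $\|M_n\|\asymp\sqrt{w_n}$, which is exactly where $\{Y_d=0\}$ lives; so the entire content of the proposition is carried by your final paragraph. Note, moreover, that if that paragraph were correct it would make the rest superfluous: ``conditionally on the genealogy, $M_\infty$ has no atom'' integrates directly to $\PP(M_\infty=0)=0$, with no need for the $0$--$1$ law or any moment bound.

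The final paragraph, however, is a sketch whose three key claims are precisely the difficulty, and none is established. (i) The forest/equivariance statement (flip a fresh root's direction together with its whole subtree and obtain an equally likely trajectory) requires an explicit construction realizing the MERW as the step-reinforced walk of Remark \ref{remdefMERW}(iii) with reinforcement probability $a$, in which the genealogy is sampled independently of the directions and fresh directions are i.i.d.\ uniform. (ii) The assertion that the flipped trajectories' limits differ by ``$2e_j$ times the normalized signed mass of that subtree'' presupposes that $V_j:=\lim_n\gamma_n\,|T_j\cap[1,n]|$ exists and is a.s.\ \emph{strictly positive}; this is itself a nondegeneracy statement for a P\'olya-type limit — the same kind of statement you are trying to prove — and needs its own proof (e.g.\ via the continuous-time embedding, under which the subtree of a fresh step is a rate-$a$ Yule process). (iii) The conclusion ``infinitely many independent sign bits preclude concentration at any point'' is merely asserted: one must justify the identity $M_\infty=\sum_j V_j\sigma_j$ (the terms are not dominated, since $\sum_j\gamma_n|T_j\cap[1,n]|=\gamma_n n\to\infty$, so the series converges only through cancellation), and then invoke a genuine anti-concentration input — e.g.\ that the concentration function of a sum of independent variables is bounded by that of any finite partial sum, combined with an Erd\H{o}s--Littlewood--Offord or Kolmogorov--Rogozin bound after projecting onto a coordinate. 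For comparison, the paper's proof avoids all of this by a different route: the continuous-time embedding yields the distributional fixed point (\ref{markovequW}), hence the ODE (\ref{wcharode}) for the characteristic function of $w=\sum_i W_d(i)$, and Proposition \ref{wdensity} shows $\sup_x|x^{1/a}\varphi(x)|<\infty$, so $|\varphi|\in L^1$ and $w$ has a density. Your plan is instead the Bertoin--Baur--Businger-type decomposition that Remark \ref{fixedpointpaper}(ii) flags as possibly feasible; it can plausibly be completed, but as written its decisive steps are assumed rather than proven.
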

\begin{remark}
  \label{fixedpointpaper}
 (i) A work of Gu{\'e}rin, Laulin and Raschel \cite{guerin2023fixed} appeared very recently on the arXiv, showing independently, using a similar approach, that, for $d=1,2,3$, $Y_d$ admits a density, which obviously implies $\PP(Y_d = 0) = 0$ in those dimensions, and in particular, Theorem 1.4. 
 
 (ii) As is noted in the introduction part of \cite{bercu2022estimate}, $\PP(Y_1=0)=0$ could already be deduced from the works of Bertoin, Baur \cite{MR3399834} and Businger \cite{MR3827299}. See also Proposition 2.15 in \cite{guerin2023fixed}. To show that $\PP(Y_d=0)=0$, it may be possible to follow a similar technique by using Equation (3.5) in \cite{guerin2023fixed}, but here we adopt a continuous-time embedding method.
\end{remark}

For a MERW $S$ on $\ZZ^2$ with $p=5/8$, we prove the following almost-sure convergence. 

\begin{proposition}
  \label{z258logn}
  Let $S$ be a MERW on $\ZZ^2$ with parameter $p=5/8$. Then, 
  \begin{equation}
    \label{logS2lognlim1}
     \lim_{n\to \infty} \frac{\log \|S_n\|^2}{\log n}=1, \quad a.s.
  \end{equation}
\end{proposition}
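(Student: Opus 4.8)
The target is the single critical value $p=5/8=p_2$, where $a=a(2,5/8)=\tfrac12$, so the walk is diffusive up to a logarithmic factor and one expects $\|S_n\|^2=n^{1+o(1)}$. I would split the claim into the two one-sided statements: for every $\epsilon>0$, almost surely $n^{1-\epsilon}\le\|S_n\|^2\le n^{1+\epsilon}$ for all large $n$. The main tool is the compensated martingale. A direct computation from (\ref{lawMERWsigma}) gives $\EE[\sigma_{n+1}\mid\mathcal F_n]=\tfrac an S_n$, hence $\EE[S_{n+1}\mid\mathcal F_n]=\tfrac{n+a}{n}S_n$, so with $\gamma_n:=\prod_{k=1}^{n-1}\tfrac{k+a}{k}\sim n^{1/2}/\Gamma(3/2)$ the $\RR^2$-valued process $M_n:=S_n/\gamma_n$ is a martingale. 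From $\EE[\|S_{n+1}\|^2\mid\mathcal F_n]=(1+\tfrac{2a}{n})\|S_n\|^2+1$ one solves the scalar recursion at $a=\tfrac12$ to obtain $\EE\|S_n\|^2\sim n\log n$, and therefore $\EE\|M_n\|^2\sim c\log n$. I would also record that the two coordinate axes stay balanced: the numbers of steps on axis $1$ and axis $2$ form a Friedman-type urn (the sampled axis is repeated with probability $\tfrac{2p+1}{3}=\tfrac34$ and switched with probability $\tfrac14$) whose fraction converges almost surely to $\tfrac12$, so the predictable covariation $\langle M\rangle_n=\sum_{k<n}\gamma_{k+1}^{-2}\bigl(\EE[\sigma_{k+1}\sigma_{k+1}^{\top}\mid\mathcal F_k]-\tfrac{a^2}{k^2}S_kS_k^{\top}\bigr)$ is asymptotically isotropic, $\langle M\rangle_n\sim c_0(\log n)\,I$ for a constant $c_0>0$.

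The upper bound is the easy half. For each coordinate $M^{(i)}$ ($i=1,2$) Doob's $L^2$ maximal inequality on the dyadic block $[2^j,2^{j+1})$ gives $\PP(\max_{k\le 2^{j+1}}|M^{(i)}_k|>2^{j\epsilon/2})\le 2^{-j\epsilon}\,\EE|M^{(i)}_{2^{j+1}}|^2=O(j\,2^{-j\epsilon})$, which is summable in $j$. Borel–Cantelli yields $\|M_n\|=O(n^{\epsilon/2})$ almost surely, whence $\|S_n\|^2=\gamma_n^2\|M_n\|^2=O(n^{1+\epsilon})$; since $\epsilon$ is arbitrary, $\limsup_n \log\|S_n\|^2/\log n\le 1$. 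As a by-product this also gives $\tfrac{a^2}{k^2}\|S_k\|^2=o(1)$, the estimate needed for the isotropy asserted above.

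The lower bound is the crux, and it is here that two-dimensionality must be exploited. Writing $\tau_n\sim c_0\log n$ for the scalar clock, the isotropy makes $M_n$ behave like a planar Brownian motion run up to time $\tau_n$, and the target $\|S_n\|^2\ge n^{1-\epsilon}$ is equivalent to $\log\|M_n\|\ge-\tfrac\epsilon2\log n+O(1)$. I would prove this by the discrete analogue of the statement that planar Brownian motion is never super-exponentially close to the origin: since $x\mapsto\log\|x\|$ is harmonic on $\RR^2\setminus\{0\}$ and the increments of $M_n$ are $O(\gamma_n^{-1})$ and asymptotically isotropic, $\log\|M_n\|$ is an approximate martingale away from $0$, and a gambler's-ruin/harmonic-measure computation controls excursions toward the origin. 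The correct way to sum is to take geometric blocks $N_{j+1}\approx\theta N_j$, chosen so the clock advances by $O(1)$ across each block; for such a block I would bound the probability that $\|M_n\|$ ever drops below $n^{-\epsilon/2}$ by decomposing over the starting radius $\|M_{N_j}\|$, using a planar anti-concentration estimate $\PP(\|M_{N_j}\|\le r)\lesssim r^2/\log N_j$ at the start together with the conformal hitting bound $\log(R/\rho)/\log(R/\delta)$ for the plunge and a large-deviation estimate ruling out a long traverse within one block. The product is of order $1/j^2$, hence summable, and Borel–Cantelli with a within-block oscillation bound then closes the argument.

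The main obstacle is precisely this lower bound. A purely size-based argument fails: the naive estimate $\PP(\|S_n\|\le r)\asymp r^2/(n\log n)$ along a subsequence is defeated because over any block long enough to make the tail sum converge, the diffusive fluctuation of $\|S_n\|$ is of the same order as $\|S_n\|$ itself, so the walk can dip to $0$ inside a block — this is exactly the failure-of-the-Markov-property difficulty flagged for $d=2$. The resolution must use the conformal structure (that $\log\|\cdot\|$ is harmonic, so approaches to the origin are only logarithmically likely once the clock increment is controlled), and the two genuinely technical points are (i) establishing the planar anti-concentration bound $\PP(\|M_n\|\le r)\lesssim r^2/\log n$, for instance through a local-limit or characteristic-function analysis of $M_n$, and (ii) controlling the breakdown of discrete harmonicity of $\log\|M_n\|$ in the regime $\|M_n\|\lesssim\gamma_n^{-1}$ very near the origin, where the second-order expansion underlying the approximate-martingale property is no longer valid.
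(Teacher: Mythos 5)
Your upper bound (Doob's maximal inequality on dyadic blocks plus Borel--Cantelli) is correct, and you have located the real difficulty in the lower bound. But as written the lower bound is a plan, not a proof: its two load-bearing ingredients are exactly the items you yourself flag as ``technical points'' and leave unestablished. (i) The anti-concentration estimate $\PP(\|M_n\|\le r)\lesssim r^2/\log n$ is a local-limit-type statement for a non-Markovian reinforced walk at the critical parameter; the known CLT for the critical MERW (Bercu--Laulin) only controls $\PP(\|M_n\|\le r)$ for $r$ of order $\sqrt{\log n}$, not down to the polynomially small scales $r\approx n^{-\epsilon/2}$ your block argument requires, and the route you gesture at (``a local-limit or characteristic-function analysis of $M_n$'') is itself a substantial project --- the Fourier-analytic material in this paper concerns the superdiffusive limit variable $W_d$, not finite-$n$ laws at criticality. (ii) The gambler's-ruin/harmonic-measure step requires $\log\|M_n\|$ to be an approximate martingale whose errors, accumulated over a block of $\approx(\theta-1)N_j$ steps, are negligible; those errors include the covariance anisotropy driven by the axis-count fluctuations $b_n(i)/n-1/2$, which at $p=5/8$ are themselves critical (of order $\sqrt{\log n/n}$ by Lemma \ref{ratecon1overd}), and you neither verify summability of these errors uniformly on the event being tracked nor handle the regime $\|M_n\|\lesssim\gamma_n^{-1}$ where the expansion of $\log\|\cdot\|$ breaks down. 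Without (i) and (ii), the claimed $O(1/j^2)$ per-block bound, and hence the Borel--Cantelli conclusion, is unsupported.

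For contrast, the paper's proof sidesteps both issues with a regularization: it studies $x_n:=\log(\|S_n\|^2+n^{4/5})/\log n$ and shows $\sum_n\EE\bigl[\EE(x_{n+1}-x_n\mid\FF_n)^-\bigr]<\infty$. The point of the additive $n^{4/5}$ is that when $\|S_n\|$ is small the term $n^{4/5}/(\|S_n\|^2+n^{4/5})^2$ supplies a positive drift, so no lower bound on the probability of being near the origin and no hitting estimate are ever needed. Submartingale convergence then gives that $x_n$ converges a.s., and the critical law of the iterated logarithm (\ref{ltilScrit}) forces $\limsup_n x_n=1$, hence $\lim_n x_n=1$, which is equivalent to the claim. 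If you want to salvage your route, be aware that proving your ingredient (i) alone is plausibly harder than this entire argument; alternatively, you could import the regularization idea to eliminate the near-origin analysis while keeping your martingale framework for the rest.
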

\begin{remark}
 (i) The law of the iterated logarithm for a MERW $S$ on $\ZZ^d$ with parameter $p\leq p_d$ was proved by Bercu and Laulin in \cite{bercu2019multi}: for $p<p_d$,
  \begin{equation}
  \label{ltilSdiff}
  \limsup _{n \rightarrow \infty} \frac{\left\|S_n\right\|^2}{2 n \log \log n}=\frac{1}{1-2 a} \quad \text { a.s. }
\end{equation}
and for $p=p_d$, 
\begin{equation}
  \label{ltilScrit}
  \limsup _{n \rightarrow \infty} \frac{\left\|S_n\right\|^2}{2 n \log n \log \log \log n}=1 \quad \text { a.s. }
\end{equation}
In the setting of Proposition \ref{z258logn}, (\ref{ltilScrit}) implies that
$$
\limsup_{n\to \infty} \frac{\log \|S_n\|^2}{\log n}=1, \quad a.s.
$$
(ii) By (\ref{ltilSdiff}), (\ref{ltilScrit}) and Proposition \ref{rateescapMERW2d}, (\ref{logS2lognlim1}) also holds for $d\geq 3$ and $p\in [1/(2d),p_d]$.
\end{remark}

The recurrence and transience properties described above are properties of the radial component of the MERW. It is also natural to investigate the asymptotics of the angular component. We say that a transient random walk $S$ on $\ZZ^d$ has a limiting direction if $\lim _{n \rightarrow \infty} \hat{S}_n$ exists in the unit sphere $\mathbb{S}^{d-1}$. Here $\hat{x}$ denotes the unit vector $x /\|x\|$ for $x \in \RR^d \backslash \{0\}$. By Proposition \ref{Lnot0MERW}, the MERW has a limiting direction if $p>p_d$, as illustrated in Figure \ref{Fig1-2}. On the other hand, if $p\leq p_d$, we show that almost surely, the limiting direction does not exist, as illustrated in Figure \ref{Fig1-1}.

\begin{corollary}
  \label{angularasym}
  Let $S$ be a MERW on $\ZZ^d$ with parameter $p$. 
  \begin{enumerate}[topsep=0pt, partopsep=0pt, leftmargin=5pt, align=left,  label=(\roman*)]
 \item If $p>p_d$, then $\lim _{n \rightarrow \infty} \hat{S}_n$ exists a.s. 
 \item If $d\geq 3$ and $p \leq p_d$, or $d=2$ and $p=p_d$, then $\PP(\lim _{n \to \infty} \hat{S}_n\ \text{exists})=0$.
  \end{enumerate}
\end{corollary}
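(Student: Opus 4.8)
The plan is to handle the two parts separately: part (i) follows almost immediately from the results already available, while part (ii) requires promoting the known iterated-logarithm behaviour to a \emph{signed}, coordinate-wise oscillation statement.

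For part (i), where $p>p_d$ and hence $a=a(d,p)>1/2$, I would combine the almost-sure convergence (\ref{superlimY}), $S_n/n^{a}\to Y_d$, with Proposition \ref{Lnot0MERW}, which gives $\PP(Y_d=0)=0$. On the full-measure event $\{Y_d\neq 0\}$ the map $x\mapsto x/\|x\|$ is continuous at $Y_d$, so $\hat S_n=(S_n/n^{a})/\|S_n/n^{a}\|\to Y_d/\|Y_d\|$, and the limiting direction exists and equals $\hat Y_d$ almost surely.

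For part (ii) I would argue by contradiction, showing that in these regimes \emph{every} coordinate of $S_n$ changes sign infinitely often, which is incompatible with convergence of $\hat S_n$ to a unit vector. The starting point is the vector martingale $M_n:=\gamma_n S_n$ with $\gamma_n:=\prod_{k=1}^{n-1}k/(k+a)\sim\Gamma(1+a)\,n^{-a}>0$; this is a martingale because $\EE[\sigma_{n+1}\mid\FF_n]=(a/n)S_n$, the computation underlying (\ref{superlimY}). Each scalar component $M_n^{(j)}=\gamma_n S_n^{(j)}$ is a martingale with constant mean $\EE[M_n^{(j)}]=\gamma_1 S_1^{(j)}=\mathds{1}_{\{j=1\}}$. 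I would then bound its predictable quadratic variation from below: the conditional variance of the increment equals $\gamma_{k+1}^2\,\Var(\langle\sigma_{k+1},e_j\rangle\mid\FF_k)$, and since $\langle\sigma_{k+1},e_j\rangle$ is a non-degenerate $\{-1,0,1\}$-valued variable for every $p\in[0,1)$, this is $\geq c\,\gamma_{k+1}^2$ for some $c>0$. Summing gives $\langle M^{(j)}\rangle_n\gtrsim\sum_k k^{-2a}$, which diverges throughout part (ii): like $n^{1-2a}$ when $a<1/2$ and like $\log n$ at the critical value $a=1/2$ (the case $d=2,\ p=p_d$). Meanwhile $|\Delta M_{k+1}^{(j)}|\lesssim\gamma_{k+1}\asymp k^{-a}$, so $(\Delta M_{k+1}^{(j)})^2/\langle M^{(j)}\rangle_k\to 0$ and the law of the iterated logarithm for martingales (Stout) applies.

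The conclusion is then assembled as follows. After centering by the constant mean, the martingale LIL yields $\limsup_n M_n^{(j)}=+\infty$ and $\liminf_n M_n^{(j)}=-\infty$ almost surely; since $\gamma_n>0$ the same holds for $S_n^{(j)}$, so each coordinate is positive infinitely often and negative infinitely often. The walk is transient in all these cases (Theorem \ref{MERWtran12d} for $d\geq 3$, Theorem \ref{phasetranZ2MERW} for $d=2,\ p=p_d$), so $\hat S_n$ is well defined for large $n$. On the event that $\hat S_n\to u$ one would have $\|u\|=1$ and $\hat S_n^{(j)}\to u_j$ for each $j$; but a nonzero $u_j$ forces $S_n^{(j)}$ to keep a fixed sign eventually, contradicting the infinitely-many sign changes, so $u_j=0$ for all $j$ and $u=0$. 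Since the sign-change property holds almost surely, this event must be null, giving $\PP(\lim_n\hat S_n\text{ exists})=0$.

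I expect the main obstacle to be exactly this promotion to a signed oscillation: the iterated-logarithm statements (\ref{ltilSdiff})--(\ref{ltilScrit}) already recorded are for the norm $\|S_n\|^2$ and are sign-blind, so they cannot by themselves exclude convergence to a fixed direction. The work is to obtain a two-sided almost-sure result for each signed coordinate $S_n^{(j)}$, which the martingale LIL delivers once the lower bound on $\langle M^{(j)}\rangle_n$ and the increment bound are in place; the residual care is to check the non-degeneracy constant $c>0$ uniformly over the relevant range of $p$, and to treat coordinate $1$, whose martingale has nonzero mean, by centering.
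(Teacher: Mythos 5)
Your proposal is correct and takes essentially the same route as the paper: part (i) is exactly the paper's argument (combine (\ref{superlimY}) with Proposition \ref{Lnot0MERW} and continuity of $x\mapsto x/\|x\|$ away from the origin), and part (ii) rests on the same key fact, namely a two-sided coordinate-wise law of the iterated logarithm forcing each coordinate $S_n(j)$ to change sign infinitely often, which is incompatible with $\hat{S}_n$ converging to a unit vector. The only difference is that the paper cites this coordinate LIL from the proofs of Theorems 3.2 and 3.5 in \cite{bercu2019multi}, whereas you re-derive it from Stout's martingale LIL applied to the martingale $\gamma_n S_n(j)$ (with the non-degeneracy of the conditional variance that you flag indeed holding uniformly for $d\geq 2$, $p\in[0,1)$) --- which is precisely how it is proved in that reference, so your version is more self-contained but not a genuinely different approach.
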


\begin{figure}[t]
  \centering
  \subfigure[p=0.5]{
 \includegraphics[width=7.7cm]{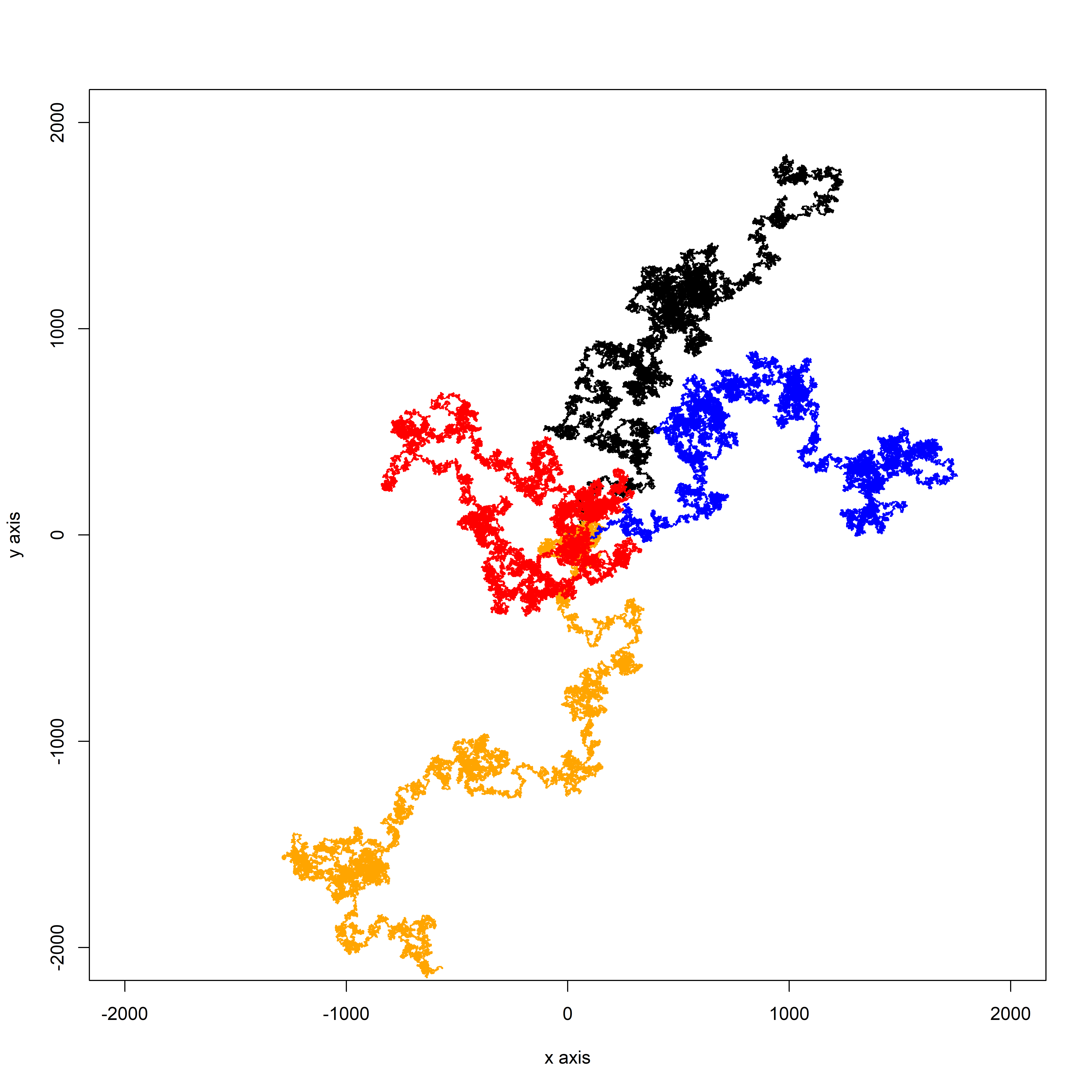}
 \label{Fig1-1}
 }
  \quad
 \subfigure[p=0.7]{
  \includegraphics[width=7.7cm]{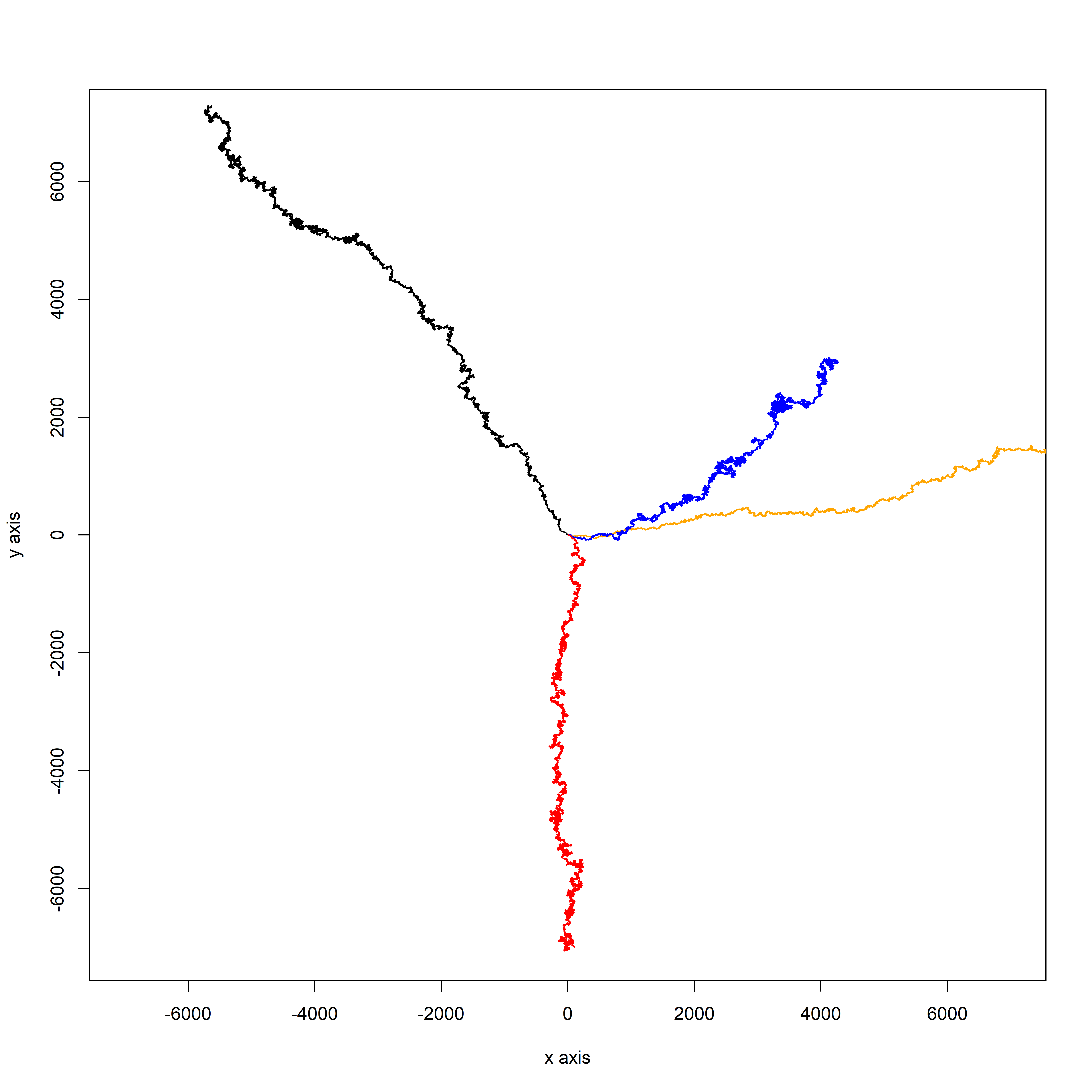}
  \label{Fig1-2}
  }
  \caption{Simulation results of 4 independent MERWs on $\ZZ^2$ with $n=1000000$ steps, showing the range of the walks}
  \label{simuMERW2}
\end{figure} 

\subsection{Introduction to the proofs and the techniques}
\label{sectechniques}

\subsubsection{Notation} Throughout this paper, we let $(\FF_n)_{n\geq 1}$ denote the filtration generated by the MERW $S$ we are studying. We let $x(i)$ denote the $i$-th coordinate of a vector $x$. In particular, we may write $S_n=(S_n(1),S_n(2),\cdots,S_n(d))$ for a MERW $(S_n)_{n\in \NN}$ on $\ZZ^d$. We write $S_n(i)^2:=(S_n(i))^2$ for simplicity. We let $C(a_1,a_2,\cdots,a_k)$ denote a positive constant depending only on real variables $a_1, a_2, \ldots, a_k$ and let $C$ denote a universal positive constant, which usually means that $C(a_1,a_2,\cdots,a_k)$ and $C$ do not depend on $n$. We denote the number of ways one can choose $m$ objects from a collection of $n$ objects by $C_n^m=n !/(m! (n-m)!)$. For a real-valued function $h$ and a $[0,\infty)$-valued function $g$, we write $h(x)=O(g(x))$ as $x\to \infty$, resp. $x\to 0$, if there exist positive constants $C$ and $x_0$ such that $|h(x)| \leq C g(x)$ for all $x \geq x_0$, resp. $|x|\leq x_0$. Given $x\in \RR$, we let $x^{-}=\max (-x, 0)$. Given two sequences $\left(u_n\right)_{n \in \mathbb{N}}$ and $\left(v_n\right)_{n \in \mathbb{N}}$ taking values in $\mathbb{R}$, we write $u_n \sim v_n$ if $u_n/v_n \to 1$ as $n$ goes to infinity. The tilde can also mean “is distributed as", e.g. we write $X \sim \operatorname{Exp}(\lambda)$ if a random variable $X$ has an exponential distribution with parameter $\lambda$. We write $X \stackrel{\mathcal{L}}{=} Y$ if two random variables/processes $X$ and $Y$ have the same distribution. $\mathcal{L}(X)$ and $\mathcal{L}(X|\cdot)$ denote the law and conditional law of $X$, respectively, where $X$ is a random variable/process.

For any time $n \geq 1$, $\sigma_n:=S_n-S_{n-1}$ denotes the n-th step of the MERW $S$. For $i\in \{1,2,\cdots,d\}$, we let $N_n(i)$, resp. $N_n(-i)$, be the number of steps of $S$ in the direction $e_i$, resp. $-e_i$, up to time $n$, i.e.
\begin{equation}
  \label{defNni}
    N_n(\pm i):= \#\{1\leq j \leq n: \sigma_j= \pm e_i\}
\end{equation}
Then, by definition,
\begin{equation}
  \label{Psigman1}
  \PP(\sigma_{n+1} = \pm e_i|\FF_n)=\frac{pN_n(\pm i)}{n}+\frac{1-p}{2 d-1}\left(1-\frac{N_n(\pm i)}{n}\right)=\frac{2dp-1}{2d-1}\frac{N_n(\pm i)}{n}+\frac{1-p}{2d-1}.
\end{equation}
Using that $S_n(i)=N_n(i)-N_{n}(-i)$ and $S_n=\sum_{i=1}^dS_n(i)e_i$, we have 
  \begin{equation}
    \label{Esigman1}
     \EE (\sigma_{n+1}|\FF_n)= \sum_{i=1}^d e_i \left(\PP(\sigma_{n+1} = e_i|\FF_n)-\PP(\sigma_{n+1} = -e_i|\FF_n)\right)  =\frac{2dp-1}{2d-1} \frac{S_n}{n}=\frac{a S_n}{n}.
  \end{equation}
  For any $n\geq 1$ and $i\in \{1,2,\cdots,d\}$, let 
\begin{equation}
  \label{numiaxis}
   b_n(i):= \#\{1\leq j \leq n: \sigma_j\in \{e_i,-e_i\}\}
\end{equation}
be the number of steps of $S$ along the i-th axis up to time $n$ and let
\begin{equation}
  \label{defcin}
  c_n(i):=\frac{2dp-1}{2d-1} \frac{b_n(i)}{n}+\frac{2-2p}{2d-1} 
\end{equation}

The following lemma computes the probability at time $n$ to move on the i-th coordinate, as well as the average move in that direction and its variance. Similar computations were done in Section 4 \cite{bercu2019multi}.
\begin{lemma}
  \label{computationlem}
  Let $S$ be a MERW on $\ZZ^d$ with parameter $p$. Let $b_n(i),c_n(i)$ be defined as in (\ref{numiaxis}) and (\ref{defcin}). Then, 
  \begin{enumerate}[topsep=0pt, partopsep=0pt, leftmargin=5pt, align=left,  label=(\roman*)]
\item For any $n\geq 1$ and $i\in \{1,2,\cdots,d\}$, $\PP(\sigma_{n+1}= e_i \ \text{or}\ -e_i|\FF_n)=c_n(i)$.
\item Recall $a=a(d,p)$ from (\ref{superlimY}). For any $n\geq 1$, $\EE (S_n\cdot \sigma_{n+1}|\FF_n)=  a\|S_n\|^2/n$.
\item  For any $n\geq 1$, we have
\begin{equation}
  \label{condsecmo}
    \EE ([S_n\cdot \sigma_{n+1}]^2|\FF_n)=\frac{1}{d}\|S_n\|^2+ \sum_{i=1}^d a(\frac{b_n(i)}{n}-\frac{1}{d})S_n(i)^2
 \end{equation}
  \end{enumerate}  
\end{lemma}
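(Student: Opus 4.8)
The plan is to reduce all three parts to the one-step conditional law (\ref{Psigman1}) and the conditional mean (\ref{Esigman1}); only part (iii) requires any real work, and even that is a bookkeeping computation rather than a genuine obstacle.

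For part (i), I would simply add the two formulas in (\ref{Psigman1}) for $\PP(\sigma_{n+1}=e_i|\FF_n)$ and $\PP(\sigma_{n+1}=-e_i|\FF_n)$. Since $N_n(i)+N_n(-i)=b_n(i)$ by the definitions (\ref{defNni}) and (\ref{numiaxis}), the two $\frac{1-p}{2d-1}$ terms combine to $\frac{2-2p}{2d-1}$ while the $N_n(\pm i)/n$ terms combine to $\frac{2dp-1}{2d-1}\frac{b_n(i)}{n}$, which is exactly $c_n(i)$. Part (ii) is then immediate: since $S_n$ is $\FF_n$-measurable, $\EE(S_n\cdot\sigma_{n+1}|\FF_n)=S_n\cdot\EE(\sigma_{n+1}|\FF_n)$, and substituting (\ref{Esigman1}) gives $S_n\cdot(aS_n/n)=a\|S_n\|^2/n$.

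The content is in part (iii). The key observation is that $\sigma_{n+1}$ always equals some $\pm e_i$, and on the event $\{\sigma_{n+1}=\pm e_i\}$ we have $S_n\cdot\sigma_{n+1}=\pm S_n(i)$, so that $[S_n\cdot\sigma_{n+1}]^2=S_n(i)^2$ regardless of the sign. Hence I would decompose the conditional expectation over the $d$ axes and apply part (i):
\begin{equation}
\EE([S_n\cdot\sigma_{n+1}]^2|\FF_n)=\sum_{i=1}^d S_n(i)^2\,\PP(\sigma_{n+1}=e_i\ \text{or}\ -e_i|\FF_n)=\sum_{i=1}^d S_n(i)^2\,c_n(i).
\end{equation}
Substituting the definition (\ref{defcin}) of $c_n(i)$ and writing $a=\frac{2dp-1}{2d-1}$ splits this into $\sum_{i=1}^d a\frac{b_n(i)}{n}S_n(i)^2+\frac{2-2p}{2d-1}\|S_n\|^2$. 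To match (\ref{condsecmo}) it then remains only to check the scalar identity $\frac{2-2p}{2d-1}=\frac{1-a}{d}$, which follows from $1-a=\frac{2d(1-p)}{2d-1}$. Rewriting the constant term as $\frac{2-2p}{2d-1}\|S_n\|^2=\frac{1}{d}\|S_n\|^2-\sum_{i=1}^d \frac{a}{d}S_n(i)^2$ and recombining with the first sum yields precisely the claimed expression. The only place one could slip is in this last verification of the constants, so I would carry out that identity first to be safe.
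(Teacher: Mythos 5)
Your proposal is correct and follows essentially the same route as the paper: both reduce (i) and (ii) to the one-step formulas (\ref{Psigman1}) and (\ref{Esigman1}), and both prove (iii) by decomposing over the axes to get $\sum_{i=1}^d c_n(i) S_n(i)^2$ and then matching constants via $\tfrac{2-2p}{2d-1}=\tfrac{1-a}{d}$. The only difference is that you spell out the algebra (correctly) that the paper's two-line proof leaves implicit.
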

\begin{proof}[Proof of Lemma \ref{computationlem}]
  (i) Note that $b_n(i)=N_n(i)+N_n(-i)$. Then (\ref{Psigman1}) implies (i). (ii) follows from (\ref{Esigman1}). Moreover, we deduce from (i) that
  $$
  \EE ([S_n\cdot \sigma_{n+1}]^2|\FF_n)=\sum_{i=1}^dc_n(i)S_n(i)^2=\frac{1}{d}\|S_n\|^2+ \sum_{i=1}^d a(\frac{b_n(i)}{n}-\frac{1}{d})S_n(i)^2
  $$
  which completes the proof of (iii).
\end{proof}

Now we introduce the main techniques we use to prove the theorems in Section \ref{secdefmainre}.

\subsubsection{Coupling} The main technique we use to prove Theorem \ref{MERWtran12d} is a coupling method. We start with a statement in dimension 1 showing that for each sample path on some probability space, ERW with a larger parameter, is further away from the origin.

\begin{proposition}
  \label{monotonicitythm}
Let $0\leq p_1\leq p_2 \leq 1$. We can define two nearest-neighbor random walks $S$ and $\tilde{S}$ on the same probability space $\PS$ such that 
\begin{enumerate}[topsep=0pt, partopsep=0pt,leftmargin=5pt, align=left,  label=(\roman*)]
\item $S$ and $\tilde{S}$ are ERWs with parameters $p_1$ and $p_2$, respectively, 
\item for any $n\geq 1$, $|S_n| \leq |\tilde{S}_n|$. In particular, $S_n=0$ if $\tilde{S}_n=0$.
\end{enumerate}
\end{proposition}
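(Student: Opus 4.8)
The plan is to avoid coupling the signed increments directly---which is hopeless, since the memory lookups of $S$ and $\tilde S$ involve their distinct histories---and instead to couple the \emph{radial motion}, i.e.\ the decision to step away from or towards the origin. The key observation is that for $d=1$, with $a=2p-1$, equation (\ref{Esigman1}) gives $\PP(\sigma_{n+1}=+1\mid\FF_n)=\tfrac12(1+aS_n/n)$, so whenever $S_n\neq0$ the conditional probability of moving outward (of increasing $|S_n|$ by one) equals
$$
q_n(|S_n|):=\frac12\Big(1+\frac{(2p-1)|S_n|}{n}\Big),
$$
which depends on the past only through $|S_n|$ and $n$, lies in $[0,1]$ because $|S_n|\le n$, and is nondecreasing in $p$ since $\partial q_n(d)/\partial p=d/n\ge0$. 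Writing $q_n^{(1)},q_n^{(2)}$ for this quantity at parameters $p_1\le p_2$, we thus have $q_n^{(1)}(d)\le q_n^{(2)}(d)$ for every $d\ge1$.

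First I would realize both walks on one space $\PS$ by a common recursion. Take mutually independent families $(U_n)_{n\ge2}$ of uniform $[0,1]$ variables and $(V_n)_{n\ge2}$ of fair $\{\pm1\}$-valued signs, and set $S_1=\tilde S_1=1$. Given $S_n$, define the increment $\sigma_{n+1}$ by: if $S_n\neq0$, let $S$ step outward ($\sigma_{n+1}=\operatorname{sgn}(S_n)$) when $U_{n+1}\le q_n^{(1)}(|S_n|)$ and inward ($\sigma_{n+1}=-\operatorname{sgn}(S_n)$) otherwise; if $S_n=0$, set $\sigma_{n+1}=V_{n+1}$. Define $\tilde S$ by the same rule with $q_n^{(2)}$ in place of $q_n^{(1)}$, driven by the \emph{same} variables $U_{n+1}$ and $V_{n+1}$.

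Next I would verify the two requirements. Since $(U_{n+1},V_{n+1})$ is independent of $\FF_n$, a one-line case check ($S_n>0$, $S_n<0$, $S_n=0$) shows $\PP(\sigma_{n+1}=+1\mid\FF_n)=\tfrac12\big(1+(2p_1-1)S_n/n\big)$, which is exactly the conditional increment law (\ref{Psigman1}) of an ERW with parameter $p_1$; as this family of conditional laws determines the distribution, $S$ is an ERW with parameter $p_1$ and, likewise, $\tilde S$ is an ERW with parameter $p_2$, giving (i). For (ii) I would show $|S_n|\le|\tilde S_n|$ by induction, using $|S_1|=|\tilde S_1|=1$ and the fact that $|S_n|$ and $|\tilde S_n|$ always have the same parity. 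If $|S_n|<|\tilde S_n|$, then $|\tilde S_n|\ge|S_n|+2$ and the $\pm1$ steps force $|S_{n+1}|\le|S_n|+1\le|\tilde S_n|-1\le|\tilde S_{n+1}|$. If $|S_n|=|\tilde S_n|=d>0$, the shared $U_{n+1}$ together with the ordering $q_n^{(1)}(d)\le q_n^{(2)}(d)$ give $\{|S_{n+1}|=d+1\}\subseteq\{|\tilde S_{n+1}|=d+1\}$, whence $|S_{n+1}|\le|\tilde S_{n+1}|$; and if $d=0$ both radii equal $1$ at time $n+1$. This closes the induction and yields (ii), and in particular $S_n=0$ whenever $\tilde S_n=0$.

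The one genuinely delicate point---the step I would treat with care---is the radial reduction itself: although the signed increments resist coupling, the conditional probability of increasing $|S_n|$ collapses to a function of $|S_n|$ alone that is monotone in $p$, so it is the outward/inward decision, and not the increment, that should be driven by the common randomness $U_{n+1}$. Once this is in place, the handling of the origin (refreshing the sign by the independent $V_{n+1}$ without disturbing the marginal laws) and the parity bookkeeping are routine.
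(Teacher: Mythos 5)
Your proposal is correct and takes essentially the same approach as the paper's own proof: both realizations couple the outward/inward (radial) decision of the two walks through a common sequence of uniform variables, using that the outward-step probability $\tfrac12\bigl(1+(2p-1)|S_n|/n\bigr)$ is nondecreasing in $p$, then verify the marginal laws against (\ref{Psigman1}) and close the domination $|S_n|\le|\tilde S_n|$ by the same parity-based induction. The only cosmetic difference is your use of separate fair signs $V_{n+1}$ to break ties at the origin, where the paper simply reuses the event $\{U_{n+1}<1/2\}$.
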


For a MERW $S$, we denote by $\zeta_m$ the exit time of $S$ from $B(0,m)$ $(m\in \NN\backslash \{0\})$, i.e.
\begin{equation}
  \label{exittimedef}
  \zeta_m:=\inf\{n\in \NN: \|S_n\|\geq m\}.
\end{equation}

\begin{corollary}
  \label{erwsrwcouple}
      Let $S$ be an ERW with parameter $p$, and let $m$ be a positive integer.  
      \begin{enumerate}[topsep=0pt, partopsep=0pt, leftmargin=5pt, align=left,  label=(\roman*)]
      \item  If $p\leq 1/2$, then $S$ is recurrent and $\EE \zeta_m \geq m^2$. Moreover,
  $$
 \limsup_{n\to \infty} \frac{|S_n|}{ \sqrt{2n \log \log n}} \leq 1, \ a.s. \quad \liminf_{n\to \infty}\sqrt{\pi n}\PP(S_{2n}=0) \geq 1 ;
  $$
  \item  if $p\geq 1/2$, then $\EE \zeta_m \leq m^2$ and 
  $$
  \limsup_{n\to \infty} \frac{|S_n|}{ \sqrt{2n \log \log n}} \geq 1, \ a.s. \quad \limsup_{n\to \infty}\sqrt{\pi n}\PP(S_{2n}=0) \leq 1.
   $$
      \end{enumerate} 
\end{corollary}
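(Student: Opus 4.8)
The plan is to reduce every assertion to the simple random walk by invoking the coupling of Proposition \ref{monotonicitythm}, with $p=1/2$ playing the role of the pivot. By Remark \ref{remdefMERW}(i) an ERW with parameter $1/2$ is exactly the simple random walk (SRW) on $\ZZ$ started from $S_1=1$, and for the SRW all four conclusions are classical and sharp: it is recurrent; applying optional stopping to the martingale $n\mapsto S_n^2-n$ (after first checking the elementary bound $\EE\zeta_m<\infty$) gives $\EE\zeta_m=m^2$ exactly, because the walk exits $B(0,m)$ precisely at $\pm m$ so that $S_{\zeta_m}^2=m^2$; the law of the iterated logarithm holds with equality, $\limsup_n |S_n|/\sqrt{2n\log\log n}=1$ a.s.; and the local central limit theorem, via Stirling applied to $\binom{2n-1}{n-1}2^{-(2n-1)}=\binom{2n}{n}2^{-2n}$, gives $\PP(S_{2n}=0)\sim 1/\sqrt{\pi n}$, i.e. $\sqrt{\pi n}\,\PP(S_{2n}=0)\to 1$. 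These SRW facts are the only external input besides the coupling.

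For part (i), where $p\le 1/2$, I would apply Proposition \ref{monotonicitythm} with $p_1=p$ and $p_2=1/2$, obtaining on one probability space an ERW $S$ with parameter $p$ together with an SRW $\tilde S$ satisfying $|S_n|\le |\tilde S_n|$ for all $n$ and $S_n=0$ whenever $\tilde S_n=0$. Each conclusion then drops out of the corresponding SRW fact. Recurrence of $\tilde S$ forces $\tilde S_n=0$ infinitely often a.s., hence $S_n=0$ infinitely often a.s.; the domination $|S_n|\le |\tilde S_n|$ shows $\tilde S$ leaves $B(0,m)$ no later than $S$, so that $\zeta_m(\tilde S)\le \zeta_m(S)$ pointwise and therefore $\EE\zeta_m\ge m^2$, and the same domination gives $\limsup_n |S_n|/\sqrt{2n\log\log n}\le 1$ a.s.; finally the inclusion $\{\tilde S_{2n}=0\}\subseteq\{S_{2n}=0\}$ yields $\PP(S_{2n}=0)\ge \PP(\tilde S_{2n}=0)$, whence $\liminf_n \sqrt{\pi n}\,\PP(S_{2n}=0)\ge 1$.

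Part (ii) is entirely symmetric: I would apply Proposition \ref{monotonicitythm} with $p_1=1/2$ and $p_2=p\ge 1/2$, so that now the SRW $\hat S$ is dominated by the ERW $S$, with $|\hat S_n|\le |S_n|$ and $\hat S_n=0$ whenever $S_n=0$. Reversing each comparison gives $\zeta_m(S)\le \zeta_m(\hat S)$ and hence $\EE\zeta_m\le m^2$, the reversed bound $\limsup_n |S_n|/\sqrt{2n\log\log n}\ge 1$ a.s., and the inclusion $\{S_{2n}=0\}\subseteq\{\hat S_{2n}=0\}$, which forces $\PP(S_{2n}=0)\le \PP(\hat S_{2n}=0)$ and so $\limsup_n \sqrt{\pi n}\,\PP(S_{2n}=0)\le 1$.

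Given Proposition \ref{monotonicitythm}, I do not expect a serious obstacle, as the argument is largely bookkeeping; the two points that deserve genuine care are the exact identity $\EE\zeta_m=m^2$ for the SRW, which requires the a priori finiteness of $\zeta_m$ before optional stopping may be applied to $S_n^2-n$, and, for the recurrence assertion, upgrading ``$S$ returns to $0$ infinitely often'' to the stated property of visiting every site infinitely often. The latter is the only step that must reach beyond the coupling, and I would handle it using the nearest-neighbor structure of the walk. The persistent hazard throughout is simply keeping straight the direction of the domination and of each resulting inequality as one passes between the two cases.
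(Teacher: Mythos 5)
Your proposal is correct and follows essentially the same route as the paper: couple the ERW with the simple random walk via Proposition \ref{monotonicitythm} (pivoting at $p=1/2$), import the classical SRW facts (gambler's ruin, LIL, local CLT via Stirling), and transfer each conclusion through the pointwise domination, with the recurrence upgrade from ``visits $0$ infinitely often'' to the full definition handled exactly as the paper does via its zero-one law (Proposition \ref{merw01}, whose proof is the nearest-neighbor/conditional Borel--Cantelli argument you sketch). The only cosmetic difference is that you rederive $\EE\tilde{\zeta}_m=m^2$ by optional stopping on $n\mapsto S_n^2-n$, whereas the paper simply cites the gambler's ruin problem.
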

\begin{remark}
  The estimates on the expected exit times of the MERW will be given in Proposition \ref{estexpectMERWhit}.
\end{remark}
\begin{proof}[Proof of Corollary \ref{erwsrwcouple}]
  (i) It is known that for a simple random walk $\tilde{S}$ with $\tilde{S}_1=1$, i.e. an ERW with parameter $1/2$, if $\tilde{\zeta_m}:=\inf\{n\in \NN: |\tilde{S}_n|=m\}$, then $\EE \tilde{\zeta}_m=m^2$ (gambler’s ruin problem, see e.g. Section 14.4, Chapter 10 \cite{gut2013probability}). Moreover, $\tilde{S}$ is recurrent and satisfies the law of the iterated logarithm and 
  $$
 \PP(\tilde{S}_{2n}=0) = \frac{C_{2n}^n}{2^{2n}} \sim \frac{1}{\sqrt{\pi n}}
  $$
  by Stirling’s formula. By Proposition \ref{monotonicitythm}, we can define $S$ and $\tilde{S}$ on the same probability space such that 
  \begin{equation}
    \label{Sn12SRWless}
    |S_n| \leq |\tilde{S}_n|, \quad \forall n\in \NN 
  \end{equation}
 In particular, $S$ visits 0 infinitely often a.s., and thus $S$ is recurrent by Proposition \ref{merw01} (see Section \ref{Lyasuintro}). In addition, (\ref{Sn12SRWless}) implies that 
 $$\zeta_m\geq \tilde{\zeta}_m, \quad \PP(S_{2n}=0) \geq \PP(\tilde{S}_{2n}=0)$$ whence (i) follows. (ii) is proved similarly.
\end{proof}

Similarly, in dimensions $d\geq 2$, one may expect that the MERW tends to go further from the origin if $p$ becomes larger, as shown in Figure \ref{simuMERW2}. However, the MERW is not Markovian in general, which makes it difficult to couple MERWs with different parameters as in Proposition \ref{monotonicitythm}. Note that the ERW ($d=1$) is a time-inhomogeneous Markov chain, which plays a crucial role in our proof of Proposition \ref{monotonicitythm}.

For the purpose of the proof, we introduce a new type of random walk on $\ZZ^d$ which we call the d-elephant random walk. A d-elephant random walk (d-ERW) $(\tilde{S}_n)_{n\in \NN}$ with a pair of parameters $(p,q)\in [0,1]^2$ is a nearest-neighbor random walk on $\ZZ^d$ defined as below.

First let $\tilde{S}_0=0 \in \ZZ^d$ and assume that $\tilde{S}_1=\tilde{\sigma}^{(q)}_1=e_1$ unless otherwise specified. As in (\ref{numiaxis}), at any time $n \geq 1$, for $i\in \{1,2,\cdots,d\}$, we let
\begin{equation}
  \label{tildebindef}
  \tilde{b}_n^{(q)}(i):= \#\{1\leq j \leq n: \tilde{\sigma}^{(q)}_j\in \{e_i,-e_i\}\}
\end{equation}
be the number of steps of $\tilde{S}$ along the i-th axis up to time $n$. As in (\ref{defcin}), we write 
\begin{equation}
  \label{tildecndef}
  \tilde{c}_n^{(q)}(i):=\frac{2dp-1}{2d-1} \frac{\tilde{b}_n^{(q)}(i)}{n}+\frac{2-2p}{2d-1} , \quad i=1,2,\cdots, d.
\end{equation}
Note that $\sum_{i=1}^d \tilde{c}_n^{(q)}(i)=1$. Given $\tilde{S}_0,\tilde{S}_1,\cdots,\tilde{S}_n=\tilde{x}$, we then define a random variable $\tilde{\sigma}^{(q)}_{n+1}$ whose (conditional) law is given by 
\begin{equation}
  \label{dERWdefequ}
  \PP(\tilde{\sigma}^{(q)}_{n+1}= \pm e_i) = \tilde{c}_n^{(q)}(i)(\frac{1}{2}\pm \frac{(2q-1)\tilde{x}(i)}{2\tilde{b}_n^{(q)}(i)}\mathds{1}_{\tilde{b}_n^{(q)}(i)\neq 0}), \quad i=1,2,\cdots,d,
\end{equation}
where we recall that $\tilde{x}(i)$ denotes the $i$-th coordinate of $\tilde{x}$. We then set
$\tilde{S}_{n+1}:=\tilde{S}_n+\tilde{\sigma}^{(q)}_{n+1}$. The definition of the d-ERW is motivated by its connection to the MERW.

\begin{proposition}
  \label{coupleMERW}
  Let $(p,q_1,q_2)\in [0,1]^3$. We can define nearest-neighbor random walks $S$, $\tilde{S}^{(q_1)}$ and $\tilde{S}^{(q_2)}$ on $\ZZ^d$ on the same probability space such that 
  \begin{enumerate}[topsep=0pt, partopsep=0pt,leftmargin=5pt, align=left,  label=(\roman*)]
\item $S$ is a MERW with parameter $p$, $\tilde{S}^{(q_1)}$ and $\tilde{S}^{(q_2)}$ are d-ERWs with pairs of parameters $(p,q_1)$ and $(p,q_2)$, respectively.
\item Let $\sigma_n:=S_n-S_{n-1}$ and $\sigma_n^{(q_k)}:=\tilde{S}^{(q_k)}_n-\tilde{S}^{(q_k)}_{n-1}$, $k=1,2$. Then, for any $n\geq 1$ and $i\in \{1,2,\cdots,d\}$, 
$$\sigma_n \in  \{e_i,-e_i\}\Leftrightarrow \tilde{\sigma}_n^{(q_1)} \in  \{e_i,-e_i\} \Leftrightarrow \tilde{\sigma}_n^{(q_2)} \in  \{e_i,-e_i\}$$ 
 Or equivalently, $ b_n(i)= \tilde{b}^{(q_1)}_n(i)=\tilde{b}^{(q_2)}_n(i)$ where $b_n(i)$ is defined in (\ref{numiaxis}), and $\tilde{b}^{(q_1)}_n(i)$, $\tilde{b}^{(q_2)}_n(i)$ are defined as in (\ref{tildebindef}) with $q=q_1,q_2$, respectively.
\item For any $n\geq 1$ and $k\in \{1,2\}$, conditional on $\{b_m(i)\}_{1\leq i \leq d, 1\leq m \leq n}=\{a_m(i)\}_{1\leq i \leq d, 1\leq m \leq n}$, the distribution of $(\tilde{S}_m^{(q_k)})_{1\leq m \leq n}$ is given by
\begin{equation}
  \label{condindderw}
  (\tilde{S}_m^{(q_k)})_{1\leq m \leq n}\stackrel{\mathcal{L}}{=} \left((X^{(1)}_{a_m(1)},X^{(2)}_{a_m(2)},\cdots, X^{(d)}_{a_m(d)})\right)_{1\leq m \leq n}
\end{equation}
where $X^{(i)}$ $(1\leq i\leq d)$ are d independent one-dimensional ERWs with common parameter $q_k$, and $X^{(1)}_1=1$, $X^{(i)}_1 \sim \operatorname{Rademacher}(\frac{1}{2})$ for $i \neq 1$. 
\item If $p \leq 1 /(2 d)$, $q_1 \in\left[0, (2d-1)p/(2 dp-2p+1)\right]$, $q_2=1/2$, then 
\begin{equation}
  \label{SitildeSibound}
  |\tilde{S}^{(q_1)}_n(i)| \leq |S_n(i)| \leq |\tilde{S}^{(q_2)}_n(i)|, \quad \forall i \in 1,2,\cdots,d; \ n\in \NN
\end{equation}
If $p \geq 1 / (2 d)$, $q_1=1/2$ and $q_2\in \left[ (2d-1)p/(2 dp-2p+1),1\right]$, then (\ref{SitildeSibound}) is still true. 
\item If $q_1\leq q_2$, then 
$$|\tilde{S}_n^{(q_1)}(i)| \leq  |\tilde{S}_n^{(q_2)}(i)|, \quad \forall i \in 1,2,\cdots,d;\ n\in \NN$$   
\end{enumerate} 
\end{proposition}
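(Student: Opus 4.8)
The plan is to construct the three processes on a single probability space $\PS$ carrying, at each time step, a fresh pair of independent uniform random variables, and to verify the five assertions by induction on $n$. The organising idea is that once the walks are forced to move along a common axis at every step, each coordinate evolves like a one-dimensional ERW run on its own clock $b_n(i)$, so that all comparisons reduce to the one-dimensional monotone coupling behind Proposition~\ref{monotonicitythm}.

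First I would record the two facts that make the common-axis coupling possible. Comparing (\ref{defcin}) with (\ref{tildecndef}), the axis weights $c_n(i)$ and $\tilde c^{(q_k)}_n(i)$ are the same function of the respective counts $b_n(i)$ and $\tilde b^{(q_k)}_n(i)$; hence, under the inductive hypothesis $b_n(i)=\tilde b^{(q_1)}_n(i)=\tilde b^{(q_2)}_n(i)$, Lemma~\ref{computationlem}(i) gives $c_n(i)=\tilde c^{(q_1)}_n(i)=\tilde c^{(q_2)}_n(i)$, and I use the first uniform at step $n+1$ to pick one common axis $I_{n+1}$ for all three walks. This preserves equality of the counts and yields (ii). Given the axis, the sign is drawn from (\ref{Psigman1}) for $S$ and from (\ref{dERWdefequ}) for each $\tilde S^{(q_k)}$; since these laws depend only on the count and the value along that single axis, the coordinates are conditionally independent one-dimensional ERWs clocked by $b_m(i)$, with a symmetric first step on each axis other than the initial one, which is (iii).

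The crux is (iv). From (\ref{Psigman1}) and Lemma~\ref{computationlem}(i), the conditional probability that a step along axis $i$ increases $|S_n(i)|$ (for $|S_n(i)|\ge1$) is
\[
\tfrac12+\frac{(2dp-1)\,|S_n(i)|}{2\bigl[(2dp-1)b_n(i)+(2-2p)n\bigr]}=\tfrac12+\frac{(2\hat q-1)\,|S_n(i)|}{2\,b_n(i)},\quad 2\hat q-1:=\frac{(2dp-1)b_n(i)}{(2dp-1)b_n(i)+(2-2p)n},
\]
so on its clock $b_n(i)$ the MERW coordinate behaves like a one-dimensional ERW with a \emph{time-varying} effective parameter $\hat q$. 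The key observation is that, with $\beta:=b_n(i)/n\in[0,1]$, the quantity $2\hat q-1=(2dp-1)\beta/[(2dp-1)\beta+(2-2p)]$ is monotone in $\beta$ and runs between $0$ at $\beta=0$ and $2q_\star-1=(2dp-1)/[2(d-1)p+1]$ at $\beta=1$, where $q_\star=(2d-1)p/(2dp-2p+1)$ is exactly the endpoint appearing in (iv). Hence $\hat q$ always lies in $[q_1,q_2]$: for $p\le1/(2d)$ one has $q_1\le q_\star\le\hat q\le\tfrac12=q_2$, and for $p\ge1/(2d)$ one has $q_1=\tfrac12\le\hat q\le q_\star\le q_2$.

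Finally I would realise (iv) and (v) by a monotone coupling of absolute values driven by the second uniform $V_{n+1}$: along the selected axis each walk increases its coordinate's absolute value iff $V_{n+1}$ lies below the threshold $\tfrac12+(2q-1)k/(2b_n(i))$ with $q\in\{\hat q,q_1,q_2\}$ and $k$ its current absolute value, the sign being fixed by the current sign (and an auxiliary fair coin at $0$). Because the three coordinates share the clock $b_n(i)$ they share the parity of $b_n(i)$, so any two of $|S_n(i)|,|\tilde S^{(q_1)}_n(i)|,|\tilde S^{(q_2)}_n(i)|$ either coincide or differ by at least $2$; when they coincide the thresholds are ordered exactly as $q_1\le\hat q\le q_2$ (resp. $q_1\le q_2$), so the larger walk cannot cross below the smaller, and when they differ by $2$ a single $\pm1$ step cannot reverse the order. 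Induction on $n$ then preserves $|\tilde S^{(q_1)}_n(i)|\le|S_n(i)|\le|\tilde S^{(q_2)}_n(i)|$, which is (\ref{SitildeSibound}), and likewise (v). I expect the main difficulty to be exactly this simultaneity: arranging one coupling that respects the forced common axis, reproduces all three prescribed marginal laws, and maintains every pairwise absolute-value ordering at once; the structural fact that rescues it is that each coordinate's absolute value is itself a time-inhomogeneous Markov chain, stochastically monotone in the (here possibly varying) persistence parameter.
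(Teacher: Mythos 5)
Your proposal is correct and follows essentially the same route as the paper's proof: a common-axis selection via one uniform (possible because the axis weights $c_n(i)$ and $\tilde c_n^{(q_k)}(i)$ agree once the counts agree), a second uniform driving the sign so that conditionally on the counts the coordinates become independent one-dimensional ERWs, and an induction on absolute values using the parity observation. Your ``effective parameter'' $\hat q$ with $q_1\le \hat q\le q_2$ is exactly the paper's threshold inequality $\frac{2q_1-1}{2b_n(i)}\le \frac{2dp-1}{2n(2d-1)c_n(i)}\le \frac{2q_2-1}{2b_n(i)}$ in different packaging, proved by the same monotonicity-in-$b_n(i)/n$ computation with the same endpoint value $(2d-1)p/(2dp-2p+1)$.
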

\begin{remark}
  \label{coupleremq1q2}
  (i) As will be shown in its proof, Proposition \ref{coupleMERW} enables the coupling for an arbitrary number of parameters $q_1,q_2,\cdots$ such that $\tilde{S}^{(q_1)}$, $\tilde{S}^{(q_2)}, \cdots$ are d-ERWs with pairs of parameters $(p,q_1)$, $(p,q_2),\cdots$  \\ 
 (ii) By Proposition \ref{coupleMERW} (iii), given $(b_n(i))_{1\leq i \leq d,n \geq 1}$, the coordinates indexed by their respective moves of a d-ERW behave like $d$ independent ERWs and $(b_n(i))_{1\leq i \leq d,n \geq 1}$ decide which of those ERWs moves in each step. This justifies its name d-ERW. \\
 (iii) If $d=1$, then $b_n(1)=n$ and thus, by (\ref{tildecndef}) and (\ref{dERWdefequ}), a 1-ERW with a pair of parameters $(p,q)$ is simply an ERW with parameter $q$. Therefore, Proposition \ref{coupleMERW} (v) generalizes the coupling in Proposition \ref{monotonicitythm}.    
\end{remark}

If $S$ is a MERW with parameter $p$, then, by Proposition \ref{coupleMERW} (iv) and (v), for any $p \in [0,1]$,
\begin{equation}
  \label{MERWlowbdp0}
  |S_n(i)| \geq |\tilde{S}^{(0)}_n(i)|, \quad\forall i \in 1,2,\cdots,d; \ n\in \NN
\end{equation}
where $\tilde{S}^{(0)}$ is a d-ERW with parameters $p$ and $q=0$. By Proposition \ref{coupleMERW} (iii), we reduce the problem of finding a lower bound of the norm of the MERW to estimating the norms of ERWs.

Note that, for an ERW $(S_n)$, a central limit theorem for the position $S_n$, properly normalized, were established in the diffusive regime $p<3 / 4$ and the critical regime $p=3 / 4$, see \cite{baur2016elephant}, \cite{bercu2017martingale} \cite{coletti2017central}, \cite{coletti2017strong}. The Berry-Esseen type bounds in \cite{fan2021cramer} and \cite{fan2022cram} give estimations for the rate of convergence to the normal distribution. We prove an improved version by using a recent result by  Dedecker, Fan, Hu and Merlev\`ede \cite{MR4646949}.

\begin{theorem}[Berry-Esseen type bounds]
  \label{cltp}
  Let $(S_n)_{n\geq 0}$ be an elephant random walk with parameter $p\leq 3/4$. 
  \begin{enumerate}[topsep=0pt, partopsep=0pt,leftmargin=5pt, align=left,  label=(\roman*)]
 \item If $p<3/4$, then there exists a constant $C(p)$ such that for all $n \geq 1$,
  $$
      \sup_{t\in \RR}|\PP(\frac{\sqrt{3-4p}S_n}{\sqrt{n}} \leq t)-\Phi(t)| \leq \frac{C(p) }{n^{\min(1,3-4p)/2}}
  $$
where $\Phi$ is the distribution function of the standard normal distribution. 
\item  If $p=3/4$, then there exists a constant $C$ such that for all $n > 1$,
$$
  \sup_{t\in \RR}|\PP(\frac{S_n}{\sqrt{n\log n}} \leq t)-\Phi(t)| \leq \frac{C }{\sqrt{\log n}}.
$$
\end{enumerate} 
\end{theorem}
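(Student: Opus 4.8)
The plan is to realize $S_n$, up to a deterministic rescaling, as a martingale and then to apply the quantitative martingale central limit theorem of Dedecker, Fan, Hu and Merlev\`ede \cite{MR4646949}. Since $d=1$ we have $a=a(1,p)=2p-1$, so by (\ref{Esigman1}) $\EE(\sigma_{n+1}\mid\FF_n)=aS_n/n$; I set $\gamma_n:=\Gamma(n+a)/(\Gamma(n)\Gamma(1+a))\sim n^a/\Gamma(1+a)$, chosen so that $M_n:=S_n/\gamma_n$ is an $(\FF_n)$-martingale with differences $D_k:=\gamma_k^{-1}\bigl(\sigma_k-\EE(\sigma_k\mid\FF_{k-1})\bigr)$. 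I would first record the variance asymptotics: with $s_n^2:=\sum_{k=2}^n\EE D_k^2$ one has $\gamma_n^2 s_n^2=\Var(S_n)\sim n/(3-4p)$ for $p<3/4$ and $\sim n\log n$ for $p=3/4$, so the normalizations in the statement are exactly $\sqrt{\Var(S_n)}\,(1+o(1))$. Because $\EE S_n=\gamma_n$ is of order $n^a$, the re-centering $\EE(S_n)/\sqrt{\Var(S_n)}$ is $O(n^{-(3-4p)/2})$ (resp. $O((\log n)^{-1/2})$) and the discrepancy between $\sqrt{\Var(S_n)}$ and the stated normalizing constant is of even smaller order; both are absorbed into the claimed error, so it suffices to bound $\sup_t|\PP(s_n^{-1}(M_n-\EE M_n)\le t)-\Phi(t)|$.

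The bound of \cite{MR4646949} controls this Kolmogorov distance by the sum of a Lyapunov term $s_n^{-3}\sum_{k}\EE|D_k|^3$ and a term measuring the $L^1$-distance of the normalized predictable quadratic variation $s_n^{-2}\langle M\rangle_n=s_n^{-2}\sum_{k}\EE(D_k^2\mid\FF_{k-1})$ to $1$, the latter entering to the power $1/2$; this square-root dependence is precisely the improvement over the earlier estimates of \cite{fan2021cramer,fan2022cram}. Since $|\sigma_k-\EE(\sigma_k\mid\FF_{k-1})|\le 2$ we get $\EE|D_k|^3\le C\gamma_k^{-3}\sim Ck^{-3a}$, and dividing by $s_n^3\sim Cn^{3(1-2a)/2}$ (resp. $C(\log n)^{3/2}$) gives a Lyapunov term of order $n^{-1/2}$ when $a\le 1/3$, of order $n^{-3(1-2a)/2}$ when $1/3<a<1/2$, and of order $(\log n)^{-3/2}$ when $a=1/2$. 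In every case this is no larger than the claimed bound, so the Lyapunov term is never dominant.

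The crux is the predictable-variation term. From $\EE(D_k^2\mid\FF_{k-1})=\gamma_k^{-2}\bigl(1-a^2 S_{k-1}^2/(k-1)^2\bigr)$ one obtains $\langle M\rangle_n-s_n^2=-a^2\sum_{k=2}^n\gamma_k^{-2}(k-1)^{-2}\bigl(S_{k-1}^2-\EE S_{k-1}^2\bigr)$, hence by the triangle inequality $\EE|\langle M\rangle_n-s_n^2|\le a^2\sum_{k=2}^n\gamma_k^{-2}(k-1)^{-2}\sqrt{\Var(S_{k-1}^2)}$. The key analytic input is therefore a fourth-moment estimate: I would prove $\Var(S_k^2)=O\bigl((\EE S_k^2)^2\bigr)$, i.e. $O(k^2)$ for $p<3/4$ and $O((k\log k)^2)$ for $p=3/4$, via the recursion for $\EE(S_{k+1}^4\mid\FF_k)$ coming from (\ref{Psigman1}) (such moment recursions are classical for the ERW, cf. \cite{bercu2017martingale,bercu2019multi}). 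Substituting this makes the sum $\sum_k\gamma_k^{-2}k^{-1}$ (resp. $\sum_k\gamma_k^{-2}k^{-1}\log k$) convergent for every $a>0$, whence $\EE|\langle M\rangle_n-s_n^2|=O(1)$ and $\EE|s_n^{-2}\langle M\rangle_n-1|=O(s_n^{-2})=O(n^{-(1-2a)})$ (resp. $O(1/\log n)$). The square-root dependence then gives a predictable-variation term of order $n^{-(1-2a)/2}=n^{-(3-4p)/2}$ (resp. $(\log n)^{-1/2}$), which dominates the Lyapunov term and matches the stated rate; for $a=0$ (that is $p=1/2$) this term vanishes identically and $S$ is a simple random walk, so the bound $n^{-1/2}$ is immediate.

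I expect the main obstacle to be the fourth-moment control $\Var(S_k^2)=O\bigl((\EE S_k^2)^2\bigr)$ uniformly in $k$: because the increments are strongly correlated, one must track the subleading terms in the moment recursion carefully across the entire diffusive range $p<3/4$ and at the critical value $p=3/4$. A secondary point requiring care is to state the abstract bound of \cite{MR4646949} in a form directly applicable to our non-stationary, non-array martingale and to verify its hypotheses.
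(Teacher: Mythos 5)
Your proposal follows the same overall strategy as the paper: the martingale $a_nS_n$ (your $M_n=S_n/\gamma_n$ is exactly the paper's $\bar S_n$ from (\ref{mnhatandef}), since your $\gamma_n=1/a_n$), the quantitative martingale CLT of Dedecker--Fan--Hu--Merlev\`ede \cite{MR4646949}, and a final change of normalization absorbed via the Lipschitz property and Gaussian tails of $\Phi$. The difference is the level at which you invoke \cite{MR4646949}. The paper quotes their Theorem 3 (or Remark 2), which is already the Berry--Esseen bound for the ERW martingale normalized by $(\sum_{k\le n}a_k^2)^{1/2}$ --- this is (\ref{barSnphibd}) --- so no conditional-variance or fourth-moment analysis appears in the paper at all; its own work is concentrated in Lemma \ref{estimatean} (quantitative comparison of $\sum_{k\le n}a_k^2$ with $na_n^2/(3-4p)$, resp.\ $na_n^2\log n$) and in the split between small and large $|t|$ when passing to the normalizations $\sqrt{n/(3-4p)}$ and $\sqrt{n\log n}$. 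You instead start from their abstract martingale inequality and re-verify its hypotheses, which amounts to re-deriving their ERW application; the price is the fourth-moment lemma $\Var(S_k^2)=O\bigl((\EE S_k^2)^2\bigr)$ (true, and provable by the moment recursions, cf.\ \cite{bercu2017martingale}), and the benefit is a more self-contained argument. Note that you also still need a quantitative version of your ``of even smaller order'' claim, namely $\Var(S_n)=\frac{n}{3-4p}\bigl(1+O(n^{-\min(1,3-4p)})\bigr)$, resp.\ $n\log n\,\bigl(1+O(1/\log n)\bigr)$; that is precisely the content of the paper's Lemma \ref{estimatean}.

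One step of your sketch is wrong as written and needs repair: the case $0\le p<1/2$, i.e.\ $a=2p-1<0$. There $\gamma_k^{-2}\asymp k^{-2a}\to\infty$, so the sum $\sum_k\gamma_k^{-2}k^{-1}$ diverges (like $n^{-2a}$), your claim $\EE|\langle M\rangle_n-s_n^2|=O(1)$ fails, and the asserted predictable-variation rate $n^{-(3-4p)/2}$ (which would even beat the stated bound) does not follow. The fix is immediate: for $a<0$ one gets $\EE|\langle M\rangle_n-s_n^2|=O(n^{-2a})$ while $s_n^2\asymp n^{1-2a}$, hence $\EE\bigl|s_n^{-2}\langle M\rangle_n-1\bigr|=O(n^{-1})$ and the square root contributes $O(n^{-1/2})$; combined with your Lyapunov estimate, which is also of order $n^{-1/2}$ in this range (so here it is the Lyapunov term that is dominant, contrary to your ``never dominant'' remark), this gives the claimed rate $n^{-\min(1,3-4p)/2}=n^{-1/2}$. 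With that correction, and the fourth-moment lemma actually carried out, your argument goes through.
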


If $S$ is an ERW with parameter $p=0$, then by Theorem \ref{cltp} (i), for any $\nu <1/2$, there exists a positive constant $C(\nu)$ such that for any $n\geq 1$,
\begin{equation}
  \label{ERWnuprobbd}
  \PP(|S_n|\leq n^{\nu} ) = \PP(\frac{\sqrt{3}|S_n|}{\sqrt{n}} \leq \frac{\sqrt{3}n^{\nu} }{\sqrt{n}}) \leq \Phi(\frac{\sqrt{3}n^{\nu} }{\sqrt{n}})-\Phi(-\frac{\sqrt{3}n^{\nu} }{\sqrt{n}}) + \frac{2C(0)}{\sqrt{n}}  \leq \frac{C(\nu)}{n^{\frac{1}{2}-\nu}}
\end{equation}
where we used the Lipschitz property of $\Phi$. Let $\varepsilon\in (0,1/(2d))$ be a constant and $\tilde{S}$ be a d-ERW ($d\geq 3$) with a pair of parameters $(p,0)$ as in Proposition \ref{coupleMERW}.  By (\ref{ERWnuprobbd}) and Proposition \ref{coupleMERW} (iii), we can show that for some positive constant $C(\varepsilon,\nu,d)$, 
$$
\PP(\|\tilde{S}_n\| \leq n^{\nu}) \leq \frac{C(\varepsilon,\nu,d)}{n^{d(\frac{1}{2}-\nu)}} +\PP(\bigcup_{i=1}^d|\frac{b_n(i)}{n}-\frac{1}{d}|>\varepsilon), \quad \forall n\geq 1
$$
For $p\neq 1$, Lemma \ref{ratecon1overd} enables us to bound the second term on the right-hand side and show that $\PP(\|\tilde{S}_n\| \leq n^{\nu})$ is summable if $\nu <\frac{1}{2}-\frac{1}{d}$. We can then prove the following result by (\ref{MERWlowbdp0}).

\begin{proposition}
  \label{MERWtran12dcouple}
 For $d\geq 3$, $p\in [0,1)$ and $q_1=0$, let $S$ and $\tilde{S}:=\tilde{S}^{(q_1)}$ be respectively a MERW and a d-ERW as in Proposition \ref{coupleMERW}. Then, for any $\nu \in (0,\frac{1}{2}-\frac{1}{d})$,
  $$
  \EE( \sum_{n=1}^{\infty}\mathds{1}_{\{\|S_n\|\leq n^{\nu}\}}) \leq \EE( \sum_{n=1}^{\infty}\mathds{1}_{\{\|\tilde{S}_n\|\leq n^{\nu}\}}) < \infty
  $$
in particular, almost surely, $\|S_n\|> n^{\nu}$ for all large $n$. 
  \end{proposition}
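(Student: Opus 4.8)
The plan is to get the first inequality for free from the coupling, and then to reduce the finiteness of the upper expectation to a coordinatewise application of the one–dimensional Berry–Esseen bound. Since $\tilde S=\tilde S^{(0)}$ is the $d$-ERW coupled to $S$ as in Proposition \ref{coupleMERW}, the pointwise estimate (\ref{MERWlowbdp0}) gives $|S_n(i)|\ge|\tilde S_n(i)|$ for every $i$ and $n$, so $\|S_n\|^2=\sum_i S_n(i)^2\ge\sum_i\tilde S_n(i)^2=\|\tilde S_n\|^2$. Hence $\{\|S_n\|\le n^\nu\}\subseteq\{\|\tilde S_n\|\le n^\nu\}$ pointwise, and comparing the indicators yields the first inequality. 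Everything then reduces to showing $\sum_{n\ge1}\PP(\|\tilde S_n\|\le n^\nu)<\infty$.

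To estimate $\PP(\|\tilde S_n\|\le n^\nu)$ I would fix $\varepsilon\in(0,1/(2d))$, set $c:=1/d-\varepsilon>0$, and split on the balance event $A_n:=\bigcap_{i=1}^d\{|b_n(i)/n-1/d|\le\varepsilon\}$, on which every axis is visited at least $cn$ times. The key structural input is Proposition \ref{coupleMERW}(iii): conditionally on the counts $\{b_m(i)\}$, the coordinates of $\tilde S$ are distributed as $d$ \emph{independent} one–dimensional ERWs $X^{(i)}$ with parameter $0$, evaluated at the times $b_n(i)$. Since $\|\tilde S_n\|\le n^\nu$ forces $|\tilde S_n(i)|\le n^\nu$ for every $i$, the conditional probability factorizes and, using $A_n\in\sigma(\{b_m(i)\})$,
$$\PP(\|\tilde S_n\|\le n^\nu,\,A_n)\le\EE\Big[\mathds{1}_{A_n}\prod_{i=1}^d\PP\big(|X^{(i)}_{b_n(i)}|\le n^\nu\,\big|\,b_n(i)\big)\Big].$$

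For each factor I would invoke the one–dimensional bound: arguing exactly as in (\ref{ERWnuprobbd}) from Theorem \ref{cltp}(i) and the Lipschitz property of $\Phi$ gives $\PP(|X^{(i)}_m|\le t)\le C(t+1)/\sqrt m$ for any threshold $t$ and time $m$ (the symmetric Rademacher first step for $i\neq1$ is irrelevant to $|X^{(i)}|$ by symmetry). Taking $t=n^\nu$ and using $m=b_n(i)\ge cn$ on $A_n$ gives $\PP(|X^{(i)}_{b_n(i)}|\le n^\nu\mid b_n(i))\le C(\nu,c)\,n^{\nu-1/2}$ uniformly, so multiplying the $d$ factors yields $\PP(\|\tilde S_n\|\le n^\nu,\,A_n)\le C(\nu,c,d)\,n^{d(\nu-1/2)}$, which is summable precisely because $\nu<\tfrac12-\tfrac1d$ forces $d(\nu-\tfrac12)<-1$. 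On the complement I would use the crude bound $\PP(\|\tilde S_n\|\le n^\nu,\,A_n^c)\le\PP(A_n^c)=\PP(\bigcup_{i=1}^d\{|b_n(i)/n-1/d|>\varepsilon\})$, which is summable by Lemma \ref{ratecon1overd} (valid for $p\neq1$). Adding the two contributions gives $\sum_n\PP(\|\tilde S_n\|\le n^\nu)<\infty$; since a nonnegative sum with finite expectation is finite a.s., both the stated inequality and the assertion that $\|S_n\|>n^\nu$ for all large $n$ follow.

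The hard part is the factorization step. The summability is bought entirely by spreading the event over all $d$ coordinates and exploiting the conditional independence of Proposition \ref{coupleMERW}(iii) to produce $d$ independent Berry–Esseen factors, each of order $n^{\nu-1/2}$: a single coordinate would give only $n^{\nu-1/2}$, which fails to be summable when $\nu$ is near $1/2$, whereas the product of $d$ such factors converges exactly under the sharp threshold $\nu<\tfrac12-\tfrac1d$. The one genuinely external ingredient is Lemma \ref{ratecon1overd}, which quantifies how fast the visit frequencies $b_n(i)/n$ concentrate at $1/d$; without a summable tail for $\PP(A_n^c)$ the contribution off the balance event could not be controlled.
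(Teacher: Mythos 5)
Your proposal is correct and takes essentially the same route as the paper's own proof: the same coupling inequality (Proposition \ref{coupleMERW} (iv)--(v)), the same split on the balance event $\bigl\{|b_n(i)/n-1/d|\le\varepsilon,\ \forall i\bigr\}$, the same conditional factorization into $d$ independent one-dimensional ERWs via Proposition \ref{coupleMERW} (iii) bounded by the Berry--Esseen estimate of Theorem \ref{cltp} (i), and Lemma \ref{ratecon1overd} (via Chebyshev with large $r$) to handle the complement. Your explicit remark that the Rademacher first step for $i\neq 1$ is harmless by symmetry is a small point the paper leaves implicit, but otherwise the two arguments coincide.
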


  \subsubsection{Connection to P\'{o}lya-type urns and continuous-time embedding}

  A MERW $(S_n)$ on $\ZZ^d$ can be embedded into a continuous-time branching process. We first model the MERW with the help of P\'{o}lya-type urns. This technique, among others, enables us to prove Proposition \ref{Lnot0MERW}. This connection to P\'{o}lya-type urns was observed by Baur and Bertoin \cite{baur2016elephant}.
  
  Suppose we have an urn of balls of $2d$ directions (or colors). The set of directions is given by $\{e_1,-e_1,\cdots,e_d,-e_d\}$.  The composition of the urn at any time $n\geq 1$ is specified by a $2 d$-dimensional random vector $N_n=(N_n(1),N_{n}(-1),N_n(2),N_{n}(-2),\cdots,N_n(d),N_{n}(-d))$. We assume that the urn has initial composition $N_1=(1,0,\cdots,0)$. At any time $n\geq 1$, we draw a ball uniformly at random from the urn, observe its direction, put it back to the urn and add with probability $p$ a ball of the same direction, or add a ball of the $2 d-1$ remaining directions each with probability $(1-p) /(2 d-1)$. We update $N_{n+1}$ accordingly. Then, $(S_n)_{n \geq 1}$ defined by 
  $$
  S_n:=\sum_{i=1}^d (N_n(i)-N_{n}(-i))e_i, \quad n \geq 1 
  $$
  is a MERW on $\ZZ^d$ and so there is no ambiguity when we use the same notation $N_n(\pm i)$ as in (\ref{defNni}). 
  
  \begin{remark}
    The MERW $S$ with parameter $p=1$ defined in Section \ref{secdefmainre} is trivial: $S_n=ne_1$. In general, we can start a MERW from time $m \in \NN\backslash \{0\}$ with initial conditions $N_m(\pm i):=\alpha_{\pm i}$ $(i=1,2,\cdots,d)$ where $\alpha_{\pm i}\in \NN$ and $\sum_{i=1}^d (\alpha_{i}+\alpha_{-i})=m$. Let $I:=\{j\in \ZZ: \alpha_j>0\}$. Then, the case $p=1$ corresponds to an $|I|$-color P\'{o}lya urn with initial composition $\alpha=(\alpha_{j})_{j\in I}$. Note that $N_n(j)=0$ for any $n\geq m$ if $j\notin I$. A basic result is that, see e.g. \cite{MR1447347},
    $$
 \lim_{n\to \infty}\frac{(N_n(j))_{j\in I}}{n}\ \text{exists with limiting distribution} \ \operatorname{Dir}(\alpha)
    $$
   whence Proposition \ref{Lnot0MERW} follows. There is thus no loss of generality in assuming that $m=1$.
  \end{remark}
  
  Now we use the continuous-time embedding by Athreya and Karlin \cite{athreya1968embedding} and Athreya and Ney (Section 9, Chapter 5 \cite{athreya2004branching}). We define the continuous-time process $(U_t)_{t\geq 0} \in \RR_+^{2d}$ as follows: Set $U_0=(1,0,\cdots,0)$. Suppose we have an urn of balls of $2d$ directions $\{e_1,-e_1,\cdots,e_d,-e_d\}$  with initial composition $(1,0,\cdots,0)$. In the urn, at any moment, each ball is equipped with an Exp(1)-distributed random clock, all the clocks being independent. When the clock of a ball of a direction, say $e_i$, rings, we add a ball of $e_i$ to the urn with probability $p$ or add a ball of the $2 d-1$ remaining directions each with probability $(1-p) /(2 d-1)$. Note that by our assumption, a new clock is launched simultaneously to replace the clock that rings, and each ball added is equipped with its own clock. Let $U_{t}(\pm i)$ be the numbers of balls of $\pm e_i$ in the urn at time $t\geq 0$, respectively. Let 
  \begin{equation}
    \label{defUt}
    U_t:=(U_{t}(1),U_{t}(-1),\cdots,U_{t}(d),U_{t}(-d)),\quad t\geq 0
  \end{equation}
  The successive jump times of $\left(U_t, t \geq 0\right)$, will be denoted by $0=\tau_0<\tau_1<\tau_2<\cdots$. By the memoryless property of exponentials, we easily see that $(U_{\tau_n},n\geq 0)$ is equal in law to $(N_{n+1},n\geq 0)$. We may define them on the same probability space such that $(U_{\tau_n},n \geq 0)=(N_{n+1},n\geq 0)$. Note that $(\tau_{n+1}-\tau_n)_{n\in \NN}$ are independent and exponentially distributed, with $\tau_{n+1}-\tau_n$ having parameter $n+1$. Moreover, the processes $\left(\tau_n\right)_{n \in \NN}$ and $\left(U_{\tau_n}\right)_{n \in \NN}$ are independent. Note that this continuous-time embedding (in the form of a point process) was used by Bertoin \cite{bertoin2021universality} to obtain a version of Donsker’s invariance principle for the step-reinforced random walk in Remark \ref{remdefMERW} (iii).
    
    Note that when $p\in (p_d,1)$, $e_1-e_{2}, e_3-e_{4},\cdots,e_{2d-1}-e_{2d} \in \RR^{2d}$ are orthogonal eigenvectors of the mean replacement matrix
    $$
\left(\begin{array}{ccccc}
p & \frac{1-p}{2 d-1} & \cdots & \cdots & \frac{1-p}{2 d-1} \\
\frac{1-p}{2 d-1} & \ddots & \ddots & & \vdots \\
\vdots & \ddots & \ddots & \ddots & \vdots \\
\vdots & & \ddots & \ddots & \frac{1-p}{2 d-1} \\
\frac{1-p}{2 d-1} & \cdots & \cdots & \frac{1-p}{2 d-1} & p
\end{array}\right)
$$
    corresponding to the eigenvalue $a=(2 d p-1) /(2 d-1)\in (1/2,1)$. By results in Chapter 5 \cite{athreya2004branching}, see also Theorem 3.1 \cite{janson2004functional}, the limits
    \begin{equation}
      \label{limMERWxiW}
      \xi:=\lim _{t \rightarrow \infty} \frac{\sum_{i=1}^{d}U_t(i)+U_t(-i)}{e^t}=\lim _{n \rightarrow \infty} \frac{n}{e^{\tau_n}}, \ W_d:=\lim _{t \rightarrow \infty} \frac{(U_{t}(i)-U_{t}(-i))_{1\leq i \leq d}}{e^{a t}}
    \end{equation}
  exist a.s.. In addition, $\EE W_d = e_1\in \RR^d$ by Theorem 2, Section 8, Chapter 5 \cite{athreya2004branching}, and $\xi$ has an Exp(1)-distribution by a result of D. Kendall \cite{kendall1966branching}. Since $S_{n+1}(i)=U_{\tau_n}(i)-U_{\tau_n}(-i)$, by (\ref{superlimY}),
  \begin{equation}
    \label{WYxi}
      W_d=Y_d\xi^{a}, \quad a.s.
  \end{equation} 
  Notice that $Y_d$ and $\xi$ are independent by the independence of $\left(\tau_n\right)_{n \in \NN}$ and $\left(U_{\tau_n}\right)_{n \in \NN}$.
  We can then get some information on the distribution of $Y_d$ by investigating the distribution of $W_d$.
  
  Adapting the method developed by Chauvin, Pouyanne and Sahnou in the proof of Proposition 4.2 \cite{chauvin2011limit} and Janson in Theorem 3.9 \cite{janson2004functional}, we can prove in Proposition \ref{fgodepropw} that the characteristic function $\varphi$ of $w:=\sum_{i=1}^d W_d(i)$ satisfies the following ODE
  \begin{equation}
    \label{wcharode}
    \varphi(x)+ a x \varphi^{\prime}(x)=\frac{dp+d-1}{2d-1} \varphi^2(x)+\frac{d-dp}{2d-1}|\varphi(x)|^2, \quad x\in \RR
  \end{equation}
  We will study in Section \ref{secLneq0} the ODEs satisfied by the real and imaginary parts of $\varphi$ and prove in Proposition \ref{wdensity} that 
  $$\sup_{x\in \RR}|x^{ \frac{1}{a}}\varphi(x)|<\infty $$ 
  Thus, $|\varphi| \in L^1$ and $w$ admits a density, which completes the proof of Proposition \ref{Lnot0MERW}.

  For the ERW, (\ref{wcharode}) enables us to obtain the moments $(\EE Y_1^n)_{n \geq 1}$ of $Y_1$ by establishing recursive equations for $(\EE Y_1^n)_{n \geq 1}$, see Corollary \ref{momentWYrec}. We will show in Proposition \ref{momcharWY1} that $(\EE Y_1^n)_{n \geq 1}$ characterize the law of $Y_1$.

  \subsubsection{Lyapunov functions method}
  \label{Lyasuintro}

  The main technique we use to prove Theorem \ref{phasetranZ2MERW} is the Lyapunov functions method developed by Lamperti \cite{lamperti1960criteria}. A good introduction to the Lyapunov functions method can be found in a book by Menshikov, Popov and Wade \cite{menshikov2016non}. 

  For an irreducible Markov random walk $(X_n)_{n\in \NN}$ on $\ZZ^d$, one may prove the recurrence property if one can find a function $f: \ZZ^d \rightarrow \mathbb{R}_{+}$ and a ball $B(0,r)$ ($r>0$) such that
\begin{equation}
  \label{LyapfdefcondE}
  \mathbb{E}\left[f\left(X_{n+1}\right)-f\left(X_n\right) \mid \sigma(X_m: m\leq n) \right] \leq  0, \text { if } X_n \notin B(0,r)
\end{equation}
and $f(x) \rightarrow \infty$ as $|x| \rightarrow \infty$. Then we can deduce that $X$ returns to $B(0,r)$ infinitely often a.s.. Indeed, if $X_k \notin B(0,r)$ for some $k\in \NN$, let $\tau:=\inf\{j\geq k: X_j \in B(0,r)\}$. Then by supermartingale convergence theorem, $\{f(X_{n\wedge \tau})\}_{n\geq k}$ is a.s. convergent. However, $X_n$ goes to infinity a.s. on $\{\tau=\infty\}$ and thus $f(X_{n\wedge \tau}) \to \infty$ a.s. on $\{\tau=\infty\}$ whence $\PP(\tau=\infty)=0$. Since $k$ is arbitrary, $(X_n)_{n\in \NN}$ is recurrent. 
\begin{remark}
  If we further assume that for some $\varepsilon>0$,
  $$
  \mathbb{E}\left[f\left(X_{n+1}\right)-f\left(X_n\right) \mid \sigma(X_m: m\leq n) \right] \leq  -\varepsilon, \text { if } X_n \notin B(0,r)
  $$
 which is referred to as Foster’s criterion \cite{MR0056232}, then one can show that $X$ is positive recurrent.
\end{remark}
Using the language of ODEs, we may say that $f$ in (\ref{LyapfdefcondE}) is a Lyapunov function and the ball $B(0,r)$ is an attractor. However, the MERW on $\ZZ^2$ is not Markovian in general and the region of attraction grows as time grows. Thus, in our case, we need to adapt the classical Lyapunov functions method.

  The following proposition implies that, like a Markov random walk, a MERW will go to infinity if it never returns to a finite ball $B(0,r)$. Moreover, it is recurrent if it returns to $B(0,r)$ infinitely often almost surely.

\begin{proposition}
  \label{merw01}
  Let $S$ be a MERW on $\ZZ^d$. Denote $A_1=\{$every vertex is visited by S infinitely often$\}$ and $A_2=\{$every vertex is visited by S finitely often$\}$. Then, 
  $$
\PP(A_1\cup A_2) = 1
  $$
\end{proposition}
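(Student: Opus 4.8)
The plan is to show that, almost surely, the random set $R:=\{x\in\ZZ^d : S_n=x \text{ for infinitely many } n\}$ of vertices visited infinitely often is either empty or all of $\ZZ^d$. Since $A_1=\{R=\ZZ^d\}$ and $A_2=\{R=\varnothing\}$, this immediately yields $\PP(A_1\cup A_2)=1$. The case $p=1$ is degenerate: the walk is deterministic, $S_n=ne_1$, so every vertex is visited at most once and $A_2$ holds surely; henceforth I assume $p\in[0,1)$. The heart of the matter is the following \emph{propagation} statement: almost surely, for every $x\in\ZZ^d$, if $x\in R$ then every nearest neighbor of $x$ also lies in $R$. Granting this, the conclusion follows from the connectedness of the nearest-neighbor graph on $\ZZ^d$: on the event $\{R\neq\varnothing\}$, fix $x\in R$; propagation places all neighbors of $x$ in $R$, and an induction along lattice paths shows $R=\ZZ^d$.

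To prove propagation I would use the conditional (Lévy) form of the second Borel--Cantelli lemma relative to the filtration $(\FF_n)$. Fix $x\in\ZZ^d$ and a coordinate direction, say $+e_i$, and set $B_n:=\{S_n=x,\ \sigma_{n+1}=e_i\}\in\FF_{n+1}$, so that $\PP(B_n\mid\FF_n)=\mathds{1}_{\{S_n=x\}}\,\PP(\sigma_{n+1}=e_i\mid\FF_n)$. If I can show that $\PP(\sigma_{n+1}=e_i\mid\FF_n)\ge c$ for some constant $c=c(p,d,x)>0$ and all large $n$ whenever $S_n=x$, then on $\{S_n=x \text{ i.o.}\}$ one has $\sum_n\PP(B_n\mid\FF_n)\ge c\sum_n\mathds{1}_{\{S_n=x\}}=\infty$; the conditional Borel--Cantelli lemma then forces $B_n$ to occur infinitely often, i.e. $S_{n+1}=x+e_i$ infinitely often, so $x+e_i\in R$. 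The same argument applies to each of the $2d$ neighbors, and a countable union of the exceptional null sets over all $x$ and all directions gives the propagation statement simultaneously for all $x$.

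The main obstacle, and the only genuinely quantitative step, is establishing the uniform lower bound $c>0$; this is delicate in the anti-reinforcement regime $p<1/(2d)$, where the coefficient $\tfrac{2dp-1}{2d-1}$ in (\ref{Psigman1}) is negative and a naive bound degenerates to $0$. The key observation is that a return to a fixed $x$ forces the empirical direction counts to be balanced: from $S_n(i)=N_n(i)-N_n(-i)=x(i)$ and $N_n(i)+N_n(-i)\le n$ one gets $N_n(i)/n\le \tfrac12+\tfrac{|x(i)|}{2n}$, and similarly for $N_n(-i)/n$. Since the right-hand side of (\ref{Psigman1}) is affine in $N_n(i)/n$, its minimum over the admissible interval is attained at an endpoint: the endpoint $N_n(i)/n=0$ gives the bound $\tfrac{1-p}{2d-1}$, while the endpoint $N_n(i)/n\approx\tfrac12$ gives $\tfrac{(d-1)p+1/2}{2d-1}-o(1)$. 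Both limiting constants are strictly positive for every $p\in[0,1)$ and $d\ge 1$, so a fixed positive $c$ works for all $n$ beyond a deterministic threshold $n_0(x)$, which is all that is needed for the divergence of the series. Combining this lower bound with the conditional Borel--Cantelli step and the connectivity argument of the first paragraph completes the proof.
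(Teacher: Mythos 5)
Your proposal is correct and follows essentially the same route as the paper's own proof: the conditional Borel--Cantelli lemma applied to the events $\{S_n=x,\ \sigma_{n+1}=e_i\}$, a uniform positive lower bound on $\PP(\sigma_{n+1}=e_i\mid\FF_n)$ on $\{S_n=x\}$ derived from the balance constraint $N_n(i)/n\le \tfrac12+|x(i)|/(2n)$, and connectedness of $\ZZ^d$. Your endpoint analysis of the affine map in $N_n(i)/n$ is exactly the paper's case split on the sign of $2dp-1$, so there is no gap and nothing genuinely different to compare.
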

\begin{proof}
  The case $p=1$ is trivial. We may assume that $p\in [0,1)$.
  Let $x\in \ZZ^d$ and let $y$ be a neighbor of $x$, say $y=x+e_i$ for some $i\in \{1,2,\cdots,d\}$. By the conditional Borel-Cantelli lemma, see e.g. \cite{dacunhaprobability}, 
  $$
  \{\sum_{n=1}^{\infty} \mathds{1}_{\{S_n=x,S_{n+1}=y\}} =\infty\} =  \{\sum_{n=1}^{\infty} \EE(\mathds{1}_{\{S_n=x,S_{n+1}=y\}}|\FF_n)=\infty\}, \quad a.s.
  $$
  Recall $N_n(\pm i)$ from (\ref{defNni}). Since $N_n(i)+N_{n}(-i) \leq n$, $N_n(i)-N_{n}(-i)=S_n(i)$, 
  $$
 \frac{N_n(i)}{n} \leq \frac{1}{2}+ \frac{S_n(i)}{2n}
  $$
  Thus, if $2dp-1<0$, by (\ref{Psigman1}),
  $$
  \EE(\mathds{1}_{\{S_n=x,S_{n+1}=y\}}|\FF_n) \geq \left(\frac{2dp-1}{2d-1}(\frac{1}{2}+ \frac{x(i)}{2n})+ \frac{1-p}{2d-1}\right)\mathds{1}_{\{S_n=x\}}
  $$
  Since $x$ is fixed, for any $n \geq 3|x(i)|$, the right hand side is lower bounded by $(p+\frac{1-2dp}{6d-3})\mathds{1}_{\{S_n=x\}}$. If $2dp-1\geq 0$, 
  $$
  \EE(\mathds{1}_{\{S_n=x,S_{n+1}=y\}}|\FF_n) \geq \frac{1-p}{2d-1}\mathds{1}_{\{S_n=x\}}
  $$
In either case, $ \sum_{n=1}^{\infty} \mathds{1}_{\{S_n=x,S_{n+1}=y\}} =\infty$ a.s. on $E_x:=\{x \ \text{is visited infinitely many times}\}$. Indeed, this proves that a.s. on $E_x$, all neighbors of $x$ are visited infinitely many times. By the connectedness of $\ZZ^d$, this implies that for any $x, y\in \ZZ^d$, a.s. on the $E_x$, $y$ is visited infinitely many times. Thus, either all vertices are visited infinitely many times or all vertices are visited finitely often.
 \end{proof}

In dimension $d=2$, for $p <5/8$, we may find a proper Lyapunov function $f$ such that for $n$ less than some stopping time, we can give an upper bound on $\mathbb{E}\left[f\left(S_{n+1}\right)-f\left(S_n\right) \mid \FF_n \right]$, see e.g. Inequalities (\ref{2dEfdiff}) and (\ref{upbdsumajS}) in the proof Proposition \ref{Snsio}. Using these estimates, we can first prove a weaker result. Recall $a$ in (\ref{superlimY}). In the two-dimensional case, $a=(4p-1)/3$.
\begin{proposition}
  \label{Snsio}
  Let $S$ be a MERW on $\ZZ^2$ with parameter $p < 5/8$, i.e. $a<1/2$, let $s\in (a,1/2)$ be a positive constant. Then, almost surely, $\|S_n\|\leq n^s$ infinitely often.
\end{proposition}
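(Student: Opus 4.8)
The plan is to argue by contradiction using a time-dependent Lyapunov function adapted to the moving ball $B(0,n^s)$, in the spirit of the Lyapunov functions method recalled in Section \ref{Lyasuintro}. Suppose the conclusion fails; then for some deterministic $N\geq 1$ the event $G_N:=\{\|S_n\|>n^s \text{ for all } n\geq N\}$ has positive probability. Since $a<s$, I fix $a'\in(a,s)$ and take the radial function $f(x):=\log(1+\|x\|^2)$ (the $+1$ only removes the singularity at the origin and changes none of the estimates below). The feature that drives the contradiction is that on $G_N$ one has $f(S_n)\geq 2s\log n$, whereas the conditional drift of $f(S_n)$ only accounts for growth at rate $2a\log n$; because $s>a$, the fluctuating (martingale) part of $f(S_n)$ would be forced to $+\infty$, which is impossible.

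First I would establish the drift estimate. Writing $\|S_{n+1}\|^2=\|S_n\|^2+2\,S_n\cdot\sigma_{n+1}+1$ and Taylor-expanding $\log(1+u)$ with $u=(2\,S_n\cdot\sigma_{n+1}+1)/\|S_n\|^2$, the conditional moments of Lemma \ref{computationlem} give
\[
\EE\bigl(f(S_{n+1})-f(S_n)\mid\FF_n\bigr)=\frac{2a}{n}+\rho_n ,
\]
where the first-order term contributes $2a/n$ through (\ref{Esigman1}), the two $O(1/\|S_n\|^2)$ contributions (one from $\EE(u\mid\FF_n)$, one from $-\tfrac12\EE(u^2\mid\FF_n)$ via the leading term $\tfrac12\|S_n\|^2$ of (\ref{condsecmo})) cancel exactly, and the remainder obeys
\[
|\rho_n|\leq C\,\frac{\bigl|\sum_{i=1}^2\bigl(\tfrac{b_n(i)}{n}-\tfrac12\bigr)S_n(i)^2\bigr|}{\|S_n\|^4}+\frac{C}{n\|S_n\|^2}+\frac{C}{\|S_n\|^3}.
\]
This is the required drift inequality. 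I would then localize: let $\tau:=\inf\{n\geq N:\|S_n\|\leq n^s\}$, so that $G_N=\{\tau=\infty\}$ and $\|S_n\|>n^s$ on $\{n<\tau\}$.

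With $a'$ as above, set $Z_n:=f(S_{n\wedge\tau})-2a'\log(n\wedge\tau)$. On $\{n<\tau\}$ its conditional increment equals $\tfrac{2(a-a')}{n}+\rho_n+O(n^{-2})$, whose leading term is strictly negative. Provided one shows $\sum_{n<\tau}|\rho_n|<\infty$ almost surely, the process $Z_n$ minus the almost surely convergent compensator $\sum_{k<n\wedge\tau}(\rho_k+O(k^{-2}))$ is a supermartingale; moreover it is bounded below, since on $\{n<\tau\}$ one has $Z_n\geq 2(s-a')\log n\geq 0$ and after $\tau$ it is frozen at a finite value. Hence $Z_n$ converges almost surely to a finite limit. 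But on $\{\tau=\infty\}$ one has $\|S_n\|>n^s$, so $Z_n\geq 2(s-a')\log n\to+\infty$, contradicting convergence. Therefore $\PP(\tau=\infty)=0$ for every $N$, i.e. $\|S_n\|\leq n^s$ infinitely often almost surely. (Equivalently, in the Doob decomposition $f(S_{n\wedge\tau})=M_{n\wedge\tau}+A_{n\wedge\tau}$ with $A_{n\wedge\tau}=2a\log(n\wedge\tau)+O(1)$, the martingale $M_{n\wedge\tau}$ would tend to $+\infty$ on $\{\tau=\infty\}$, which the convergence/oscillation dichotomy rules out because its increments are $O(n^{-s})\to0$.)

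The hard part will be the almost sure summability of $\sum_{n<\tau}|\rho_n|$. On $\{n<\tau\}$ one controls only $\|S_n\|>n^s$, so the terms $n^{-1}\|S_n\|^{-2}$ and $\|S_n\|^{-3}$ are dominated by $n^{-1-2s}$ and $n^{-3s}$, while, using $b_n(1)+b_n(2)=n$, the anisotropy term is at most $C\,\bigl|\tfrac{b_n(1)}{n}-\tfrac12\bigr|\,n^{-2s}$. Summing these requires the quantitative rate of convergence $b_n(i)/n\to 1/2$ supplied by Lemma \ref{ratecon1overd}, together with a careful treatment of the stretches during which $\|S_n\|$ hugs the moving boundary $n^s$ (where the crude bounds degrade); establishing this summability, rather than the routine drift cancellation or the soft supermartingale contradiction, is the technical heart. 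I note finally that the non-Markovian, time-inhomogeneous nature of the walk is absorbed harmlessly here: every estimate is conditional on $\FF_n$, and the attracting region $B(0,n^s)$ is itself allowed to grow with $n$, exactly as anticipated in the discussion following Proposition \ref{merw01}.
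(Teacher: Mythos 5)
Your drift computation for $f(x)=\log(1+\|x\|^2)$ is correct: since $\EE([S_n\cdot\sigma_{n+1}]^2\mid\FF_n)=\tfrac12\|S_n\|^2+O(|\eta_n|\,\|S_n\|^2)$ in dimension $2$, the two $O(\|S_n\|^{-2})$ contributions cancel exactly (approximate harmonicity of $\log$ in the plane), and the soft supermartingale-plus-contradiction scaffolding around it is fine (modulo a routine localization to make the compensated process genuinely bounded below). But the step you yourself defer as ``the technical heart'' --- the almost sure summability of $\sum_{n<\tau}|\rho_n|$ --- is a genuine gap, and it is not merely technical: with the information available it \emph{fails} for small $s$. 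On $\{n<\tau\}$ you only know $\|S_n\|>n^s$, so your third-order term gives $n^{-3s}$ (at best $n^{-4s}$ after exploiting the extra $1/n$ in the odd conditional moments), and the anisotropy term gives $|\eta_n|\,n^{-2s}$, which even with the strongest available input (Lemma \ref{ratecon1overd}: $|\eta_n|\leq n^{-\nu}$ eventually, for any $\nu<1/2$) is summable only when $s>1/4$. The proposition must hold for \emph{every} positive $s\in(a,1/2)$, and $a=(4p-1)/3$ can be negative (e.g.\ $a=-1/3$ at $p=0$), so arbitrarily small $s$ is in scope; proving the statement for large $s$ does not imply it for smaller $s$. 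Nor can the ``hugging the boundary'' scenario be ruled out independently: under the contradiction hypothesis $\{\tau=\infty\}$ there is nothing preventing $\|S_n\|\approx n^s$ over long stretches, and excluding that is essentially the content of the proposition itself. Crucially, your choice of $f$ leaves no negative drift term to absorb these errors, precisely because the second-order term cancels exactly; compensating the positive errors instead destroys the argument, since the compensator may then grow like $n^{1-4s}\gg\log n$, killing the divergence $Z_n\geq 2(s-a')\log n$ that produces your contradiction.

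This is exactly why the paper uses $f(x)=\sqrt{\log\|x\|}$ rather than $\log$: the concavity of the square root produces a \emph{strictly negative} second-order drift term of order $\|S_n\|^{-2}(\log\|S_n\|)^{-3/2}$, see (\ref{2dEfdiff}), into which the anisotropy error is absorbed \emph{pathwise} as long as $\log\|S_n\|\sum_i|b_n(i)/n-\tfrac12|<\tfrac1{10}$ --- a condition enforced by the stopping time $T_k$ (stopping when $\sum_i|b_j(i)/j-\tfrac12|\geq j^{-1/4}$), not by any summability. The paper then gets a lower-bounded supermartingale $f(S_{\cdot\wedge\tau_k\wedge T_k})-\sum_j \frac{a}{2j\sqrt{\log\|S_j\|}}$, derives a contradiction on $\{\tau_k=\infty\}\cap\{T_k=\infty\}$ via the law of the iterated logarithm (\ref{ltilSdiff}), and finally removes the event $\{T_k<\infty\}$ using Lemma \ref{ratecon1overd}. (A side benefit of your set-up, worth keeping if you repair it: your contradiction needs only $\|S_n\|>n^s$ and no law of the iterated logarithm.) To fix your proof you would need to replace $\log(1+\|x\|^2)$ by such a concave modification of $\log$, after which the error compensator --- and hence the problematic summability --- becomes unnecessary; as written, the program cannot be completed for $s\leq 1/4$.
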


Starting from time $n$ with $\|S_n\|\leq n^s$, the attraction remains strong enough before $\theta:=\inf\{j \geq n: \|S_j\| \geq j^t \}$ where $t\in (s,1/2)$. We can show by martingale techniques that with positive probability uniformly bounded away from 0, $S$ will return to $B(0,r)$ before $\theta$. We then conclude by using Levy's 0-1 law and Proposition \ref{merw01} that $S$ is recurrent for $p<5/8$. Combined with Proposition \ref{Lnot0MERW} and Proposition \ref{z258logn}, this completes the proof of Theorem \ref{phasetranZ2MERW}.

\subsubsection{Organization of the paper} We describe in detail the coupling technique in Section \ref{seccouplMERW}. Proposition \ref{monotonicitythm}, Proposition \ref{coupleMERW} and Proposition \ref{MERWtran12dcouple} are proved in Section \ref{coupleERWs}, Section \ref{coupleMERWderw} and Section \ref{proofnu1211d}, respectively.

Using the continuous-time embedding, we prove the ODE (\ref{wcharode}) and Proposition \ref{Lnot0MERW} in Section \ref{secLneq0}. Moreover, Corollary \ref{angularasym} is proved. For $Y_1$ defined in (\ref{superlimY}), we investigate its distribution in Section \ref{dismomenY1}.

Section \ref{2dERWsec} is devoted to investigating the recurrence and transience properties of the MERW on $\ZZ^2$ with $p\leq 5/8$.
In Section \ref{proofweakerrec}, we use the Lyapunov functions method to prove Proposition \ref{Snsio}. The proofs of Proposition \ref{z258logn} and Theorem \ref{phasetranZ2MERW} are presented in Section \ref{proof2d}.

Some other results on the MERW are proved in Section \ref{someotherre}. In Section \ref{estexpexit}, we study the asymptotic behavior of $\EE \|S_n\|^2$ and provide some estimates of the expected exit times of the MERW. Section \ref{secimprorate} is devoted to the proof of Proposition \ref{rateescapMERW2d}. In Section \ref{rateconclteswp0}, we prove Theorem \ref{cltp}.

\section{Proofs of the main results}

\subsection{Coupling}
\label{seccouplMERW}

\subsubsection{Coupling of ERWs: Proof of Proposition \ref{monotonicitythm}}
\label{coupleERWs}

\begin{proof}[Proof of Proposition \ref{monotonicitythm}]
  Let $\{U_n: n\geq 2\}$ be independent uniform random variables on $(0,1)$. Let $S_0=\tilde{S}_0=0$ and $S_1=\tilde{S}_1=1$. At time $n\geq 1$, suppose that $S_n=x\neq 0$, resp. $\tilde{S}_n=\tilde{x}\neq 0$, we let $S_{n+1}$ be a neighbor of $x$ such that $|S_{n+1}|=|x|+1$, resp. $\tilde{S}_{n+1}$ be a neighbor of $\tilde{x}$ such that $|\tilde{S}_{n+1}|=|\tilde{x}|+1$, if
  \begin{equation}
  \label{coupleUgofar}
  U_{n+1}<\frac{1}{2}+\frac{(2p_1-1)|x|}{2n}, \quad \text{resp.} \quad U_{n+1}<\frac{1}{2}+\frac{(2p_2-1)|\tilde{x}|}{2n}
  \end{equation}
  otherwise, let $S_{n+1}$ be such that $|S_{n+1}|=|x|-1$, resp. $\tilde{S}_{n+1}$ be such that $|\tilde{S}_{n+1}|=|\tilde{x}|-1$. If $S_n=0$, resp. $\tilde{S}_n=0$, set $S_{n+1}=1$, resp. $\tilde{S}_{n+1}=1$, if $U_{n+1}<1/2$, and otherwise, set $S_{n+1}=-1$, resp. $\tilde{S}_{n+1}=-1$.

  We claim that $S$ and $\tilde{S}$ defined above are ERWs with parameters $p_1$ and $p_2$, respectively. 
  
  \textbf{Proof of the claim}: As in (\ref{defNni}), we denote by $N_n(\pm i)$ the number of steps of $S$ in the directions $\pm e_1$ up to time $n$. Given $S_n=x$, we have $N_n(1)=(n+x)/2$ and $N_n(-1)=(n-x)/2$. Then, by definition, 
  $$\PP(S_{n+1}=x+1|S_n=x,S_0,S_1,\cdots,S_{n-1}) =\frac{1}{2}+\frac{(2p_1-1)x}{2n}=p_1\frac{N_n(1)}{n}+(1-p_1)\frac{N_n(-1)}{n}$$ 
  By the definition of ERW or (\ref{Psigman1}), $S$ is an ERW with parameter $p_1$. The proof of that $\tilde{S}$ is an ERW with parameter $p_2$ is similar. The claim is proved.
  
  It remains to prove (ii).  We proceed by induction on $n$. Trivally, it holds for $n \leq 1$. Assume that the statement holds for all $n\leq k$ for some $k\geq 1$. Note that $|\tilde{S}_k|-|S_k|$ is even. If $|\tilde{S}_k|-|S_k| \geq 2$, then the statement holds for $n=k+1$. If $|\tilde{S}_k|=|S_k|$, then 
  $$\frac{1}{2}+\frac{(2p_1-1)|S_k|}{2n} \leq \frac{1}{2}+\frac{(2p_2-1)|\tilde{S}_k|}{2n}$$
  thus, $|\tilde{S}_{k+1}|=|\tilde{S}_{k}|+1$ if $|S_{k+1}|=|S_k|+1$. Again, this implies that (ii) holds for $n=k+1$.
  \end{proof}

\subsubsection{Coupling of a MERW and d-ERWs}
\label{coupleMERWderw}

\begin{proof}[Proof of Proposition \ref{coupleMERW}]
  Let $\{U_n\}_{n\geq 2}$ and $\{U_n^{(i)}\}_{n\geq 2, 1\leq i \leq d}$ be i.i.d. uniform random variables on $(0,1)$. Set $S_0=\tilde{S}_0^{(q_1)}=\tilde{S}_0^{(q_2)}=0$ and $S_1=\tilde{S}_1^{(q_1)}=\tilde{S}_1^{(q_2)}=e_1 \in \RR^d$. For $n\geq 1$ and $i\in \{1,2,\cdots,d\}$, let $b_n(i),c_n(i)$ be defined as in (\ref{numiaxis}), (\ref{defcin}), and set $c_n(0):=0$. Write $I_i:=(\sum_{j=0}^{i-1}c_n(j),\sum_{j=0}^{i}c_n(j))$. We define $\sigma_{n+1}$, $\tilde{\sigma}_{n+1}^{(q_k)}$ ($k=1,2$) as follows. Assume $U_{n+1}\in I_i$ for some $i \in \{1,2,\cdots, d\}$. For any $k \in \{1,2\}$, suppose that $S_{n}(i)\neq 0$, resp. $S_{n}^{(q_k)}(i)\neq 0$, then we set $\sigma_{n+1}=\operatorname{sgn}(S_n(i))e_i$, resp. $\tilde{\sigma}_{n+1}^{(q_k)}=\operatorname{sgn}(\tilde{S}_{n}^{(q_k)}(i))e_i$ if
\begin{equation}
  \label{UniMERWdERW}
  U_{n+1}^{(i)}<\frac{1}{2}+\frac{(2dp-1)|S_n(i)|}{2n(2d-1)c_n(i)}, \quad \text{resp.} \quad U_{n+1}^{(i)}<\frac{1}{2}+\frac{(2q_k-1)|\tilde{S}_{n}^{(q_k)}(i)|}{2b_n(i)}
\end{equation}
(note that $c_n(i)\neq 0$ if $S_n(i)\neq 0$ and $b_n(i)\neq 0$ if $\tilde{S}_{n}^{(q_k)}(i)\neq 0$), otherwise, set $\sigma_{n+1}=-\operatorname{sgn}(S_n(i))e_i$, resp. $\tilde{\sigma}_{n+1}^{(q_k)}=-\operatorname{sgn}(\tilde{S}_{n}^{(q_k)}(i))e_i$. Suppose that $S_n(i)=0$, resp. $S_{n+1}^{(q_k)}(i)=0$, then as in the proof Proposition \ref{monotonicitythm}, we set $\sigma_{n+1}=e_i$, resp. $\tilde{\sigma}_{n+1}^{(q_k)}=e_i$, if $U_{n+1}^{(i)}<1/2$, and otherwise, set $\sigma_{n+1}=-e_i$, resp. $\tilde{\sigma}_{n+1}^{(q_k)}=-e_i$.

Claim: $S$ and $\tilde{S}^{(q_k)}$ ($k=1,2$) defined above satisfy the requirements in Proposition \ref{coupleMERW}.

\textbf{Proof of the claim}: (i)  Given $S_0,S_1,\cdots,S_n$, by (\ref{UniMERWdERW}), the conditional probability that $S_{n+1}=S_n+ e_i$ is 
$$
\begin{aligned}
  &\quad c_n(i)(\frac{1}{2}+\frac{(2dp-1)S_n(i)}{2n(2d-1)c_n(i)}\mathds{1}_{c_n(i)\neq 0})=\frac{c_n(i)}{2}+ \frac{(2dp-1)S_n(i)}{2n(2d-1)} \\
  &= \frac{(2dp-1)(b_n(i)+S_n(i))}{2n(2d-1)} + \frac{1-p}{2d-1}=\frac{(2dp-1) N_{n}(i)}{(2d-1)n} + \frac{1-p}{2d-1}
\end{aligned}
$$
which agrees with the conditional probability in (\ref{Psigman1}). For the conditional probability that $S_{n+1}=S_n-e_i$, the proof is similar. Thus, $S$ is a MERW with parameter $p$. Similarly, one can show that the conditional law of $ \tilde{\sigma}_{n+1}^{(q_k)}$ agrees with  (\ref{dERWdefequ}), and thus $\tilde{S}^{(q_k)}$ is d-ERWs with a pair of parameters $(p,q_k)$.

(ii) Simply observe that for any $n\geq 2$, $U_n\in I_i \Leftrightarrow \sigma_n \in  \{e_i,-e_i\}\Leftrightarrow \tilde{\sigma}_n^{(q_k)} \in  \{e_i,-e_i\}$ ($k=1,2$).

(iii) For any $i\in \{1,2,\cdots,d\}$, suppose that the successive jump times of $b_{\cdot}(i)$ up to time $n$ are $1\leq k_1 <k_2<\cdots <k_{a_n(i)}$, i.e. for $1\leq \ell \leq a_n(i)$, 
$$
k_{\ell}:=\min\{m \geq 1: b_{m}(i)=\ell\}
$$
Note that the jump times are determined by $\{a_m(i)\}_{1\leq i \leq d, 1\leq m \leq n}$.
We define a random walk $X^{(i)}$ on $\ZZ$: set $X_0^{(i)}=0$ (and $X_1^{(i)}=1$ if $i=1$), and for any integer $\ell$ such that $1\leq \ell \leq a_n(i)$ if $i\neq 1$ (resp. $2\leq \ell \leq a_n(i)$ if $i=1$), let $X^{(i)}_{\ell}$ be a neighbor of $X^{(i)}_{\ell-1}\neq 0$ such that $| X^{(i)}_{\ell}|=|X^{(i)}_{\ell-1}|+1$ if
$$
U_{k_{\ell}}^{(i)}<\frac{1}{2}+\frac{(2q_1-1)|X^{(i)}_{\ell-1}|}{2(\ell -1)}
$$
and let $X^{(i)}_{\ell}$ be such that $|X^{(i)}_{\ell}|=|X^{(i)}_{\ell-1}|-1$ otherwise. As before, if $X^{(i)}_{\ell-1}=0$, we set $X^{(i)}_{\ell}=1$ if $U_{k_{\ell}}^{(i)}<1/2$ and set $X^{(i)}_{\ell}=-1$ otherwise. Then $(X^{(i)})_{1\leq i \leq d}$ are independent by the independence of $\{U_n^{(i)}: n\geq 2, i=1,2,\cdots, d\}$. From the proof of Proposition \ref{monotonicitythm}, see e.g. (\ref{coupleUgofar}), we see that $X^{(i)}$ is an ERW with parameter $q_1$. By the definition (\ref{UniMERWdERW}), $\tilde{S}_m^{(q_1)}=(X^{(i)}_{a_m(i)})_{1\leq i \leq d}$ for any $m\leq n$. The result for $\tilde{S}_m^{(q_2)}$ is proved similarly.

(iv) Assume that $p \leq 1 /(2 d)$, $q_1 \in\left[0, (2d-1)p/(2 dp-2p+1)\right]$, $q_2=1/2$. In particular, $c_n(i)\neq 0$ for all $n$ and $i$. We show that if $b_n(i)\neq 0$,
 \begin{equation}
  \label{coupleineSsqiq2}
  \frac{(2q_1-1)}{2b_n(i)} \leq \frac{(2dp-1)}{2n(2d-1)c_n(i)} \leq 0=(2q_2-1)
 \end{equation}
 The second inequality is obvious. The first inequality is trivial for $p=1/(2d)$. We may assume that $p<1/(2d)$. By the definition (\ref{defcin}), the first inequality in (\ref{coupleineSsqiq2}) is equivalent to 
 \begin{equation}
  \label{coupleineSsqiq2fbn}
  \frac{(1-2dp)b_n(i)}{n(2d-1)c_n(i)}= \frac{\frac{b_n(i)}{n}}{\frac{2-2p}{1-2dp}-\frac{b_n(i)}{n}} \leq 1-2q_1
 \end{equation}
 Observe that $(2-2p)/(1-2dp)>1$ and note that the function 
$$f(x):=\frac{x}{\frac{2-2p}{1-2dp}-x}, \quad x\in [0,1] $$
is increasing. Thus, $f(\frac{b_n(i)}{n}) \leq f(1) \leq 1-2q_1$ by the choice of $q_1$, which completes the proof of (\ref{coupleineSsqiq2}).
By (ii), $S(i)$, $\tilde{S}^{(q_1)}(i)$ and $\tilde{S}^{(q_2)}(i)$ move at the same times. Now (\ref{SitildeSibound}) is proved by induction as in the proof of Proposition \ref{monotonicitythm}. Indeed, if $|S_n(i)|=|\tilde{S}_n^{(q_1)}(i)|=x \neq 0$ and $U_{n+1} \in I_i$, then by (\ref{coupleineSsqiq2}), 
$$
\frac{1}{2}+\frac{(2q_1-1)|x|}{2b_n(i)} \leq \frac{1}{2}+\frac{(2dp-1)|x|}{2n(2d-1)c_n(i)}
$$
 Thus, $|S_{n+1}(i)|=|S_n(i)|+1$ if $|\tilde{S}_{n+1}^{(q_1)}(i)|=|\tilde{S}_{n}^{(q_1)}(i)|+1$. If $x=0$, then $|S_{n+1}(i)|=1=|\tilde{S}_{n+1}(i)|$. By induction, $|\tilde{S}^{(q_1)}_n(i)| \leq |S_n(i)|$ for all $i \in 1,2,\cdots,d$ and $n\in \NN$.
That $|S_n(i)| \leq |\tilde{S}^{(q_2)}_n(i)|$ is proved similarly.

 Now assume that $p \geq 1 / (2 d)$, $q_1=1/2$ and $q_2\in \left[ (2d-1)p/(2 dp-2p+1),1\right]$. We claim that if $b_n(i)\neq 0$ and $c_n(i)\neq 0$, then
  \begin{equation}
    \label{q1pq2ine2}
    2q_1-1= 0\leq \frac{(2dp-1)}{2n(2d-1)c_n(i)} \leq \frac{2q_2-1}{2b_n(i)}
  \end{equation}
  Again, we may assume $p>1/(2d)$. As in (\ref{coupleineSsqiq2fbn}), we need to show that
 $$
 -f(\frac{b_n(i)}{n}) \leq 2q_2-1
 $$
 which follows from the fact that $-f$ in increasing in $[0,1]$ and $-f(1) \leq 2q_2-1$ by the choice of $q_2$. The rest of the proof is similar.
 
 (v) The proof is the same as the proof of Proposition \ref{monotonicitythm}.
\end{proof}

 \subsubsection{Proof of Theorem \ref{MERWtran12d}}
 \label{proofnu1211d}

 Recall $b_n(i)$ from (\ref{numiaxis}). Theorem 3.21 \cite{janson2004functional} or Section V.9.3 \cite{athreya2004branching} implies that if $p\in [0,1)$ and $d\geq 2$, then almost surely, for any $i\in \{1,2,\cdots, d\}$,
 \begin{equation}
   \label{binlim1d}
   \lim_{n\to \infty}\frac{b_n(i)}{n}=\frac{1}{d}
 \end{equation}
To prove Theorem \ref{MERWtran12d}, we need some estimates on the rate of convergence in (\ref{binlim1d}). Let 
\begin{equation}
  \label{defetan}
  \eta_n(i):=\frac{b_n(i)}{n}-\frac{1}{d}, \quad n \geq 1
\end{equation}

\begin{lemma}
  \label{ratecon1overd}
  Let $p\in [0,1)$ and $d\geq 2$. For $r \geq 1$ and any $i \in \{1,2,\cdots,d\}$, there exists a constant $C(r,p,d)$ such that for any $n>1$,
  $$
  \EE |\eta_n(i)|^r\leq C(r,p,d)n^{-\frac{r}{2}},\ \text{if}\ p<p_d; \quad \EE |\eta_n(i)|^r\leq C(r,d)(\log n)^{\frac{r}{2}}n^{-\frac{r}{2}},\ \text{if}\ p=p_d
  $$
  and 
  $$
  \EE |\eta_n(i)|^r\leq C(r,p,d)n^{-\frac{2rd(1-p)}{2d-1}},\ \text{if} \ p>p_d
  $$
  Moreover, if $p\leq p_d$, for any $\nu <\frac{1}{2}$, $n^{\nu}\eta_n(i)$ converges to 0 a.s. 
\end{lemma}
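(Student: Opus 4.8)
The plan is to isolate the dynamics of $b_n(i)$, which by (\ref{Psigman1}) and (\ref{defcin}) depends on the past only through $b_n(i)$ itself: writing $\xi_{n+1} := \mathds{1}_{\{\sigma_{n+1}\in\{e_i,-e_i\}\}}$, we have $\EE[\xi_{n+1}\mid\FF_n]=c_n(i)=a\,b_n(i)/n+(2-2p)/(2d-1)$, so that $(b_n(i))_n$ behaves as a two-color (``axis $i$'' versus ``the rest'') generalized P\'olya-type urn. First I would record that the affine map $x\mapsto ax+(2-2p)/(2d-1)$ has $1/d$ as its fixed point, i.e. $a\cdot\tfrac1d+\tfrac{2-2p}{2d-1}=\tfrac1d$, and use this to reduce the mean recursion for $\eta_n(i)$ from (\ref{defetan}) to the clean form
\begin{equation}
\EE[\eta_{n+1}(i)\mid\FF_n]=\frac{n+a}{n+1}\,\eta_n(i).
\end{equation}
This identity is the engine of the whole argument.

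Next I would linearize by setting $M_n:=\gamma_n\,\eta_n(i)$ with $\gamma_n:=\prod_{k=1}^{n-1}\frac{k+1}{k+a}$, chosen so that $\gamma_{n+1}\frac{n+a}{n+1}=\gamma_n$; by Gamma-function asymptotics $\gamma_n\sim C(p,d)\,n^{1-a}$. Then $(M_n)$ is an $\FF_n$-martingale, and from $\eta_{n+1}(i)-\EE[\eta_{n+1}(i)\mid\FF_n]=(\xi_{n+1}-c_n(i))/(n+1)$ its increments are $\Delta M_n=\gamma_{n+1}(\xi_{n+1}-c_n(i))/(n+1)$. Since $\xi_{n+1}\in\{0,1\}$ and $c_n(i)\in[0,1]$, these satisfy $|\Delta M_n|\le C n^{-a}$ and $\EE[(\Delta M_n)^2\mid\FF_n]=\gamma_{n+1}^2\,c_n(i)(1-c_n(i))/(n+1)^2\le C n^{-2a}$. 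The entire trichotomy of the statement now comes from the behaviour of $\sum_{k<n}k^{-2a}$, which is $\asymp n^{1-2a}$, $\asymp\log n$, or $O(1)$ according as $a<1/2$, $a=1/2$, or $a>1/2$, i.e. according as $p<p_d$, $p=p_d$, or $p>p_d$.

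For $r\ge2$ I would feed these two bounds into Rosenthal's (or Burkholder--Davis--Gundy's) martingale inequality,
\begin{equation}
\EE|M_n|^r\le C(r)\Big(\big(\textstyle\sum_{k<n}\EE[(\Delta M_k)^2\mid\FF_k]\big)^{r/2}+\sum_{k<n}\EE|\Delta M_k|^r\Big)+C|M_1|^r,
\end{equation}
and check in each regime that the quadratic-variation term dominates (for $a<1/2$ this uses $r/2\ge1$, so that $n^{(1-2a)r/2}$ beats $n^{1-ar}$). This yields $\EE|M_n|^r\le C n^{(1-2a)r/2}$, $\le C(\log n)^{r/2}$, or $\le C$ respectively; dividing by $\gamma_n^r\asymp n^{(1-a)r}$ and using the identity $1-a=2d(1-p)/(2d-1)$ produces exactly the three claimed bounds on $\EE|\eta_n(i)|^r$. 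The remaining range $1\le r<2$ then follows from the $r=2$ case by Jensen's inequality, $\EE|\eta_n(i)|^r\le(\EE|\eta_n(i)|^2)^{r/2}$.

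Finally, for the almost-sure statement when $p\le p_d$ I would combine the moment bound (worst case $\EE|\eta_n(i)|^r\le C(\log n)^{r/2}n^{-r/2}$) with a Borel--Cantelli / direct summation argument: for fixed $\nu<1/2$, $\EE|n^\nu\eta_n(i)|^r\le C(\log n)^{r/2}n^{-(1/2-\nu)r}$, which is summable in $n$ once $r$ is chosen with $(1/2-\nu)r>1$. Tonelli then gives $\sum_n|n^\nu\eta_n(i)|^r<\infty$ almost surely, whence $n^\nu\eta_n(i)\to0$ almost surely. The main obstacle is not any single estimate but aligning the martingale normalization $\gamma_n$ with the regime-dependent control of the predictable quadratic variation, since it is precisely the three cases of $\sum_{k<n}k^{-2a}$ that generate the three rates; the dominance check ensuring the jump term never overtakes the variation term for $r\ge2$ is the one place that genuinely needs care.
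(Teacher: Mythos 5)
Your proposal is correct and is essentially the paper's own proof in different notation: your martingale $M_n=\gamma_n\,\eta_n(i)$ with $\gamma_n=\prod_{k=1}^{n-1}\frac{k+1}{k+a}$ is exactly the paper's $\eta_n/\beta_n$ (the paper's $\beta_n=\prod_{k=1}^{n-1}\bigl(1-\frac{1-a}{k+1}\bigr)=\prod_{k=1}^{n-1}\frac{k+a}{k+1}$ is precisely $1/\gamma_n$), and in both arguments the trichotomy of rates comes from $\sum_k k^{-2a}$ after a martingale moment inequality (Rosenthal for you, Burkholder in the paper) followed by division by the $n^{(1-a)r}$ normalization. Your remaining deviations—Jensen for $1\le r<2$ and Tonelli instead of Chebyshev plus Borel--Cantelli for the almost-sure claim—are cosmetic and equally valid.
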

\begin{proof}
  We prove only the case $i=1$, the proof of other cases is similar. For simplicity of notation, by a slight abuse of notation, we write $\eta_n:=\eta_n(1)$. By Lemma \ref{computationlem} (1), for any $n\geq 1$, we may write 
  $$
b_{n+1}(1):=b_n(1)+\xi_{n+1}
  $$
where $\xi_{n+1}$ is a Bernoulli random variable such that
$$
  1-\PP(\xi_{n+1}=0|\FF_n)=\PP(\xi_{n+1}=1|\FF_n)=c_n(1)=\frac{b_n(1)}{n}\frac{2dp-1}{2d-1}+\frac{2(1-p)}{2d-1}
$$
Thus, $(\eta_n)$ satisfies the following recursion 
  \begin{equation}
    \label{binrecur}
     \eta_{n+1}=\eta_n+\gamma_n(-\eta_n+\epsilon_{n+1}), \quad n\geq 1
  \end{equation}
  where 
  $$\gamma_n:=\frac{2d(1-p)}{(2d-1)(n+1)}, \quad \epsilon_{n+1}:=\frac{2d-1}{2d(1-p)} (\xi_{n+1}-\frac{b_n(1)}{n}\frac{2dp-1}{2d-1}-\frac{2(1-p)}{2d-1})$$
  Observe that $\EE(\epsilon_{n+1}|\FF_n)=0$ and $|\epsilon_{n+1}|<c_1(p,d)$ for some constant $c_1(p,d)$. Let $\beta_n:=\prod_{k=1}^{n-1}(1-\gamma_k)$ with the convention that $\beta_1=1$. Note that
  \begin{equation}
    \label{betanasy}
     \lim_{n\to \infty}\beta_n n^{\frac{2d(1-p)}{2d-1}} = c_2(p,d)
  \end{equation}
  where $c_2(p,d)$ is positive constant. By induction, one can show that 
  \begin{equation}
    \label{etanformula}
     \eta_n=\beta_n(\eta_1+ \sum_{j=1}^{n-1}\frac{\gamma_j}{\beta_{j+1}}\epsilon_{j+1}), \quad n\geq 1
  \end{equation}
  By (\ref{betanasy}) and Burkholder's inequality, see e.g. Theorem 2.10 \cite{hall1980martingale}, for $p<p_d$, there exist positive constants $C(r)$ and $C(r,p,d)$ such that for any $n>1$,
  \begin{equation}
    \label{rmsecondpart}
      \EE(\beta_n^r |\sum_{j=1}^{n-1}\frac{\gamma_j}{\beta_{j+1}}\epsilon_{j+1}|^r) \leq c_1^rC(r) \beta_n^r [\sum_{j=1}^{n-1} (\frac{\gamma_j}{\beta_{j+1}})^2]^{\frac{r}{2}} \leq C(r,p,d) n^{-\frac{r}{2}}
  \end{equation}
 Similarly, for any $n>1$,
  \begin{equation}
    \label{secondpartbdpd}
      \EE(\beta_n^r |\sum_{j=1}^{n-1}\frac{\gamma_j}{\beta_{j+1}}\epsilon_{j+1}|^r)\leq C(r,p,d) (\frac{\log n}{n})^{\frac{r}{2}}  \quad \text{if}\ p=p_d
  \end{equation}
  and 
$$
 \EE(\beta_n^r |\sum_{j=1}^{n-1}\frac{\gamma_j}{\beta_{j+1}}\epsilon_{j+1}|^r)\leq C(r,p,d) n^{-\frac{2rd(1-p)}{2d-1}}  \quad \text{if}\ p>p_d
$$
 Combined with (\ref{betanasy}) and (\ref{etanformula}), these inequalities complete the proof of the first assertion since $2d(1-p)/(2d-1) \geq 1/2$ if $p\leq p_d$.

Now we prove the last assertion. We first assume that $p< p_d$. For any $\nu <1/2$, choose $\kappa \in (\nu,1/2)$, then $\beta_n|\eta_1|<n^{-\kappa}/2$ for large $n$. Thus, by (\ref{rmsecondpart}) and Chebyshev's inequality,
\begin{equation}
  \label{etannnu}
  \PP(|\eta_n|\geq \frac{1}{n^{\kappa}}) \leq \PP( \beta_n|\sum_{j=1}^{n-1}\frac{\gamma_j}{\beta_{j+1}}\epsilon_{j+1}|\geq \frac{1}{2n^{\kappa}}) \leq \frac{2^r C(r,p,d) }{n^{r(\frac{1}{2}-\kappa)}}
\end{equation}
which is summable if we choose $r$ large enough such that $r(\frac{1}{2}-\kappa)>1$. By Borel-Cantelli lemma, almost surely, $n^{\kappa}|\eta_n| < 1$ for all but finite many $n$. In particular, $n^{\nu}|\eta_n| \to 0$. The case $p=p_d$ is proved similarly, where we use (\ref{secondpartbdpd}) instead of (\ref{rmsecondpart}).
\end{proof}

Now we are ready to prove Proposition \ref{MERWtran12dcouple} which implies Theorem \ref{MERWtran12d}(the case $p=1$ is trivial).

\begin{proof}[Proof of Proposition \ref{MERWtran12dcouple}]
 Fix $\varepsilon\in (0,\frac{1}{2d})$, define
$$
B_n(\varepsilon):=\{ x=(x_i)_{1\leq i \leq d} \in \NN^d: \sum_{i=1}^d x_i=n, |\frac{x_i}{n}-\frac{1}{d}|\leq \varepsilon, \ \forall i \in \{1,2,\cdots,d\}\}
$$
By Proposition \ref{coupleMERW} (iv), $\PP(\|S_n\|\leq n^{\nu})\leq \PP(\|\tilde{S}_n\|\leq n^{\nu} )$. Observe that
\begin{equation}
  \label{decompPtilSnnu}
  \PP( \|\tilde{S}_n\|\leq n^{\nu})\leq \PP(\{\|\tilde{S}_n\|\leq n^{\nu}\} \cap \{(b_n(i))_{1\leq i\leq d}\in B_n(\varepsilon)\})+\PP((b_n(i))_{1\leq i\leq d}\notin B_n(\varepsilon))
\end{equation}
It suffices to show that the right-hand side is summable. For the first term, we have
\begin{equation}
  \label{tildeSfirsttermbd}
     \begin{aligned}
   &\quad\ \PP(\{\|\tilde{S}_n\|\leq n^{\nu}\} \cap \{(b_n(i))_{1\leq i\leq d}\in B_n(\varepsilon)\}) \\
   &=  \sum_{x\in B_n(\varepsilon)}\PP(\|\tilde{S}_n\|\leq n^{\nu} |\bigcap_{i=1}^d\{b_n(i)=x_i\})\PP(\bigcap_{i=1}^d\{b_n(i)=x_i\}) \\
    &\leq \sum_{x\in B_n(\varepsilon)}\PP(\bigcap_{i=1}^d\{|\tilde{S}_n(i)|\leq n^{\nu}\}|\bigcap_{i=1}^d\{b_n(i)=x_i\}) \PP(\bigcap_{i=1}^d\{b_n(i)=x_i\}) 
  \end{aligned}
\end{equation}
  Conditional on $\bigcap_{i=1}^d\{b_n(i)=x_i\}$, by Proposition \ref{coupleMERW} (iii), $\tilde{S}_n$ is equal in law to $(Z^{(i)}_{x_i})_{1\leq i \leq d}$ where $\{Z^{(i)}\}_{1\leq i \leq d}$ are independent elephant random walks on $\ZZ$ with parameter 0. Then, since $x_i/n \in [\frac{1}{d}-\varepsilon, \frac{1}{d}+\varepsilon]$ for all $x\in B_n(\varepsilon)$, by Theorem \ref{cltp}, as in (\ref{ERWnuprobbd}), for some positive constant $C(\varepsilon,\nu,d)$ independent of $n$, we have
\begin{equation}
  \label{ERWnnuest}
  \PP(|Z^{(i)}_{x_i}| \leq n^{\nu}) \leq \PP(|\frac{\sqrt{3} Z^{(i)}_{x_i}}{\sqrt{x_i}}| \leq \frac{\sqrt{3}x_i^{\nu}}{(\frac{1}{d}-\varepsilon)^{\nu}\sqrt{x_i}} ) \leq \frac{C(\varepsilon,\nu,d)}{n^{\frac{1}{2}-\nu}}
\end{equation}
Thus, by (\ref{tildeSfirsttermbd}), the first probability on the right-hand side of (\ref{decompPtilSnnu}) is upper bounded by
$$
(\frac{C(\varepsilon,\nu,d)}{n^{\frac{1}{2}-\nu}})^d \PP((b_n(i))_{1\leq i\leq d}\in B_n(\varepsilon))\leq \frac{(C(\varepsilon,\nu,d))^d}{n^{d(\frac{1}{2}-\nu)}}
$$
which is summable if $\nu \in (0,\frac{1}{2}-\frac{1}{d})$. For the second probability, recall $\eta_n(i)$ defined in (\ref{defetan}). By Chebyshev's inequality, for $r\geq 1$,
$$
\PP((b_n(i))_{1\leq i\leq d}\notin B_n(\varepsilon)) \leq \sum_{i=1}^d \PP(|\frac{b_n(i)}{n}-\frac{1}{d}| >\varepsilon ) \leq \frac{\sum_{i=1}^d  \EE |\eta_n(i)|^r }{\varepsilon^r}
$$
By Lemma \ref{ratecon1overd}, for any $p\in [0,1)$, the right-hand side is summable if we choose $r$ large enough, which implies that the right-hand side of (\ref{decompPtilSnnu}) is summable.
\end{proof}

\subsection{The limit in the superdiffusive regime}

Recall the continuous-time embedding $(U_t)_{t\geq 0}$ defined in (\ref{defUt}) and $\xi$, $W_d$ from (\ref{limMERWxiW}).

As mentioned in Remark \ref{fixedpointpaper}, Gu{\'e}rin, Laulin and Raschel proved independently in a recent paper \cite{guerin2023fixed} similar results in this section for $Y_d$, in particular, Corollary \ref{momentWYrec} and Proposition \ref{momcharWY1} (see Theorem 1.4 and Section 2.4 in their paper). Notice that $W_d$ is different from $W$ in Equation (3.7) in \cite{guerin2023fixed} which is a limit in a discrete-time system.

\subsubsection{Proof of Proposition \ref{Lnot0MERW}}
\label{secLneq0}

This section is devoted to the proof of Proposition \ref{Lnot0MERW}.  Recall $a=(2dp-1)/(2d-1)$.

\begin{proposition}
  \label{fgodepropw}
  For $p\in (p_d,1]$, let $\varphi(x)=f(x)+g(x)\mathrm{i}$ be the characteristic function of $w$ defined by $w:=\sum_{i=1}^d W_d(i)$. Then, $\varphi$ satisfies the ODE in (\ref{wcharode}), i.e.
  $$
    \varphi(x)+ a x \varphi^{\prime}(x)=\frac{dp+d-1}{2d-1} \varphi^2(x)+\frac{d-dp}{2d-1}|\varphi(x)|^2, \quad x\in \RR
$$
  In particular, $f$ and $g$ solve the following ODEs
\begin{equation} 
   \label{fgodew}
   \left\{\begin{aligned}  f(x)+ a x f^{\prime}(x)&=f^2(x)+\frac{1-2dp}{2d-1}g^2(x) \\ g(x)+a x g^{\prime}(x)&=\frac{2(dp+d-1)}{2d-1}  f(x) g(x) 
  \end{aligned}  \right.
\end{equation}
with initial conditions $f(0)=1$ and $g^{\prime}(0)=1$.
\end{proposition}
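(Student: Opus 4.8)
The plan is to exploit the continuous-time branching structure of the urn $(U_t)_{t\geq 0}$ from (\ref{defUt}), derive a fixed-point integral equation for $\varphi$, and differentiate it to obtain the stated ODE. First I would decompose the process at its first jump time $\tau_1$. Since the urn starts from the single ball of direction $e_1$, the waiting time $\tau_1$ is $\mathrm{Exp}(1)$-distributed, and at time $\tau_1$ the initial ball rings and a new ball of some random direction $C$ is added, where by (\ref{lawMERWsigma}) we have $C=e_1$ with probability $p$ and $C=c$ with probability $(1-p)/(2d-1)$ for each $c\in\{\pm e_1,\ldots,\pm e_d\}\setminus\{e_1\}$. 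By the memoryless (branching) property, for $t>\tau_1$ the composition splits as the sum of two independent urns, one started from a ball of direction $e_1$ and one from a ball of direction $C$, each evolving as a fresh copy of $(U_t)$. Writing $W^{(c)}$ for the almost sure limit in (\ref{limMERWxiW}) of the process started from a single ball of direction $c$, and setting $w^{(c)}:=\sum_{i=1}^d W^{(c)}(i)$, letting $t\to\infty$ gives
$$w=e^{-a\tau_1}\big(w^{(e_1)}+w^{(C)}\big)\quad\text{a.s.},$$
where $w^{(e_1)}$ is an independent copy of $w$, and, conditionally on $C$, the variable $w^{(C)}$ is independent of $w^{(e_1)}$ and of $\tau_1$.

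The crux is to identify the law of $w^{(C)}$ using the symmetries of the dynamics. The group of permutations of the $d$ axes shows that $w^{(e_j)}\stackrel{\mathcal{L}}{=}w$ for every $j$, while the global reflection $e_i\mapsto-e_i$ (applied simultaneously to all coordinates), which is a symmetry of (\ref{lawMERWsigma}) and sends each $W(i)$ to $-W(i)$, shows that $w^{(-e_j)}\stackrel{\mathcal{L}}{=}-w$. Hence $\EE[e^{ixw^{(C)}}\mid C]=\varphi(x)$ when $C$ points in a positive direction and $\overline{\varphi(x)}$ when $C$ points in a negative direction. Collecting the weights from (\ref{lawMERWsigma}), the total probability of a positive direction is $p+(d-1)\tfrac{1-p}{2d-1}=\tfrac{dp+d-1}{2d-1}$ and of a negative direction is $d\tfrac{1-p}{2d-1}=\tfrac{d-dp}{2d-1}$; these are exactly the coefficients in (\ref{wcharode}), and the reflection identity $w^{(-e_j)}\stackrel{\mathcal{L}}{=}-w$ is precisely what produces the conjugate term $|\varphi|^2$.

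With these in hand, conditioning on $\tau_1=s$ and using independence gives $\EE[e^{ixw}\mid\tau_1=s]=\varphi(xe^{-as})\,G(xe^{-as})$, where $G(y):=\tfrac{dp+d-1}{2d-1}\varphi(y)+\tfrac{d-dp}{2d-1}\overline{\varphi(y)}$, so that integrating against the $\mathrm{Exp}(1)$ density yields
$$\varphi(x)=\int_0^\infty e^{-s}\,\varphi(xe^{-as})\,G(xe^{-as})\,ds.$$
Since $|\varphi|\leq 1$ the integral converges absolutely, and the substitution $u=xe^{-as}$ (so $e^{-s}=(u/x)^{1/a}$) turns it into $a\,x^{1/a}\varphi(x)=\int_0^x u^{1/a-1}\varphi(u)G(u)\,du$ for $x>0$; the case $x<0$ follows from $\varphi(-x)=\overline{\varphi(x)}$. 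As $\varphi$ is continuous and $1/a-1\geq 0$ for $p\in(p_d,1]$, the right-hand side is differentiable in $x$, and differentiating gives $\varphi(x)+ax\varphi'(x)=\varphi(x)G(x)=\tfrac{dp+d-1}{2d-1}\varphi^2(x)+\tfrac{d-dp}{2d-1}|\varphi(x)|^2$, which is (\ref{wcharode}). Writing $\varphi=f+g\mathrm{i}$ and separating real and imaginary parts then gives (\ref{fgodew}), where one uses $A+B=1$ and $A-B=a$ with $A=\tfrac{dp+d-1}{2d-1}$, $B=\tfrac{d-dp}{2d-1}$. The initial conditions read off immediately: $\varphi(0)=1$ gives $f(0)=1$ and $g(0)=0$, while $\varphi'(0)=\mathrm{i}\,\EE[w]=\mathrm{i}$ gives $f'(0)=0$ and $g'(0)=1$, the value $\EE[w]=1$ coming from $\EE W_d=e_1$.

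I expect the main obstacle to be the rigorous justification of the branching decomposition: one must verify that the rescaled limits of the two independent sub-urns converge \emph{jointly} to independent copies of the limit vector, so that the almost sure identity $w=e^{-a\tau_1}(w^{(e_1)}+w^{(C)})$ genuinely holds, and one must be careful with the reflection bookkeeping that creates the conjugate term. This is exactly the point where adapting the arguments of Chauvin--Pouyanne--Sahnou and Janson is needed. Once these distributional identities are established, the passage from the integral equation to the ODE and the extraction of (\ref{fgodew}) are routine.
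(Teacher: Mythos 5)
Your proposal is correct and follows essentially the same route as the paper's proof: decomposition of the urn at the first jump time $\tau_1$, identification of the two sub-urn limits as independent copies of $\pm w$ via the symmetries of the dynamics, the resulting fixed-point integral equation for $\varphi$, and the change of variables plus differentiation yielding (\ref{wcharode}) and then (\ref{fgodew}). The only differences are cosmetic (you phrase the symmetry via axis permutations and a global reflection and handle $x<0$ through $\varphi(-x)=\overline{\varphi(x)}$, while the paper tracks signs with a Bernoulli variable $\alpha$ and writes the integral identity with $|x|$, $|t|$), and the joint-independence point you flag is treated at the same level of rigor in the paper, by appeal to the branching property and the arguments of Chauvin--Pouyanne--Sahnou and Janson.
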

\begin{example}
    \label{p1charvarphi}
  If $p=1$, then $Y_d=e_1$ and $a=1$. By (\ref{limMERWxiW}) and (\ref{WYxi}), $w$ has an Exp(1)-distribution and thus $\varphi(x)=(1-\mathrm{i} x)^{-1}$ which solves (\ref{wcharode}).
\end{example}
\begin{proof}
  We adapt the method developed in the proof of Proposition 4.2, \cite{chauvin2011limit} or Theorem 3.9, \cite{janson2004functional}.  Since $U_0=e_1\in \ZZ^{2d}$, by symmetry (one may interchange $U_t(i)$ and $U_t(-i)$ for $i\neq 1$), 
  $$
  (W_d(1),-W_d(2),-W_d(3),\cdots,-W_d(d)) \stackrel{\mathcal{L}}{=} W_d
  $$
  In particular, $w$ has the same distribution as $W_d(1)-\sum_{i=2}^d W_d(i)$. Observe that if $U$ has initial composition $U_0=(0,1,0,0,\cdots,0) \in \ZZ^{2d}$ which corresponds to a MERW with $S_1=-e_1$, then the corresponding second limit in (\ref{limMERWxiW}), denoted by $W_d^{-}$, satisfies
  $$
  W_d^{-} \stackrel{\mathcal{L}}{=}  (-W_d(1),W_d(2),W_d(3),\cdots,W_d(d))
  $$
  whence the corresponding sum $w^{-}:=\sum_{i=1}^d W_d^{-}(i)$ has the same distribution as $-w$.
  Similarly, for $i \in \{2,3,\cdots,d\}$, given that $U_0=e_{2i-1} \in \ZZ^{2d}$ (or $U_0=e_{2i} \in \ZZ^{2d}$, respectively), by symmetry, the corresponding sum has the same distribution as $w$ (or $-w$, respectively). 

  We define a Bernoulli random variable $\alpha$ such that $\alpha=1$ if a ball of $e_i$ for some $i\in \{1,2,\cdots,d\}$ is added at time $\tau_1$, and $\alpha=0$ if a ball of $-e_i$ for some $i$ is added at time $\tau_1$. Note that $\alpha$ and $\tau_1$ are independent and $\tau_1 \sim \operatorname{Exp}(1)$, $\alpha \sim \operatorname{Bernoulli}(\frac{dp+d-1}{2d-1})$. Starting from time $\tau_1$, we have two independent branching Markov processes as in (\ref{defUt}) with initial compositions $e_1 \in \ZZ^{2d}$ and $e_{2i-1} \in \ZZ^{2d}$ (or $e_{2i} \in \ZZ^{2d}$, respectively) if a ball of $e_i$ (or $-e_i$, respectively) is added at $\tau_1$ for some $i \in \{1,2,\cdots,d\}$. Thus, given that $\alpha=1$, the conditional law of $w$ is 
  $$
  \mathcal{L}(w |\alpha=1) = \mathcal{L}(\lim_{t\to \infty}e^{- a\tau_1}\frac{\sum_{i=1}^d U_{t}(i)-U_{t}(-i)}{e^{a(t-\tau_1)}}|\alpha=1) =  \mathcal{L}(e^{- a\tau_1}([2] w))
  $$
  where the notation $[n] w$ stands for the sum of $n$ independent copies of $w$. Similarly, the conditional law of $w$ given that $\alpha=0$ is equal to the law of 
  $$e^{- a\tau_1}([1] w+[1](-w))$$
  where the notation $[n] X+[m] Y$ stands for the sum of $n$ independent copies of a random variable $X$ and $m$ independent copies of a random variable $Y$. In summary,
  \begin{equation}
    \label{markovequW}
    w \stackrel{\mathcal{L}}{=} e^{-a\tau_1}([1+\alpha] w+[1-\alpha] (-w))
  \end{equation}
  Then, by (\ref{markovequW}), we have, for $x\in \RR$,
  $$
    \begin{aligned}
    \varphi(x) &= \EE e^{\mathrm{i} x w } =\mathbb{E}\left(\mathbb{E}\left(\frac{dp+d-1}{2d-1}e^{\mathrm{i} x e^{-a\tau}([2] w)}+\frac{d(1-p)}{2d-1}e^{\mathrm{i} x e^{-a\tau}([1] w+[1] (-w))} \mid \tau\right)\right) \\
  & =\int_0^{+\infty} \left[\frac{dp+d-1}{2d-1}\varphi^{2}\left(x e^{-at}\right)+\frac{d(1-p)}{2d-1}|\varphi|^2\left(x e^{-a t}\right)\right] e^{-t} dt
  \end{aligned}
  $$
  A change of variable under the integral gives, for $x\neq 0$,
  $$
  \varphi(x)=\frac{x}{a|x|^{1+\frac{1}{a}}} \int_0^x [\frac{dp+d-1}{2d-1}\varphi^{2}(t)+\frac{d(1-p)}{2d-1}|\varphi|^2(t)] \frac{d t}{|t|^{1- \frac{1}{a}}}
  $$
  Differentiation of this equality leads to (\ref{wcharode}) which still holds for $x=0$. Recall that $W_d$ in (\ref{limMERWxiW}) satisfies $\EE W_d=e_1$. The initial conditions are then given by $\varphi(0)=1$ and $\varphi^{\prime}(0)=\mathrm{i}\EE w =\mathrm{i}\EE W_d(1)=\mathrm{i} $.
\end{proof}

Note that $w$ admits a density if $p=1$, see Example \ref{p1charvarphi}. For $p \in (p_d,1)$, using Proposition \ref{fgodepropw}, we prove the following proposition which implies Proposition \ref{Lnot0MERW}.

\begin{proposition}
  \label{wdensity}
For $p\in (p_d,1)$, i.e. $a\in (1/2,1)$, let $\varphi$, $f$ and $g$ be as in Proposition \ref{fgodepropw}. We have 
\begin{equation}
  \label{fgdL1estimate}
  \limsup_{|x| \to \infty} |x^{\frac{1}{a}}f(x)| <\infty, \quad \limsup_{|x| \to \infty} |x^{\frac{1}{a}}g(x)| <\infty 
\end{equation}
In particular, $\sup_{x\in \RR}|x^{\frac{1}{a}}\varphi(x)|<\infty$ and $|\varphi| \in L^1$. Thus, $w$ admits a density $p_w$ given by
$$
p_w(x):=\frac{1}{2 \pi} \int_{\mathbb{R}} \mathrm{e}^{-\mathrm{i} x \cdot z} \varphi(z) d z, \quad x\in \RR
$$
whence $\PP(W_d=0)=\PP(Y_d=0)=0$. Moreover, the distribution of $W_d$ is infinitely divisible.
\end{proposition}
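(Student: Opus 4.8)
The plan is to reduce the whole proposition to the single analytic statement that $|\varphi(x)|\to 0$ as $|x|\to\infty$; once that is in hand, the decay rate, the existence of a density, and the non-degeneracy of $Y_d$ all follow quickly. Since $\varphi(-x)=\overline{\varphi(x)}$, the function $f$ is even and $g$ is odd, so it suffices to work on $x>0$. Writing $u:=|\varphi|^2=f^2+g^2$, I would first combine the two equations in (\ref{fgodew}) into the clean first-order relation
$$ u'(x)=\frac{2\,(f(x)-1)}{a x}\,u(x),\qquad x>0. $$
Because $f\le|\varphi|\le1$, the right-hand side is $\le0$, so $u$ is non-increasing and converges to a limit $L\in[0,1]$, with $u(x)>0$ for every finite $x$. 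The heart of the proof is to show $L=0$.

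To rule out $L>0$, I would pass to polar coordinates $\varphi=\rho e^{\mathrm{i}\theta}$ with $\rho=\sqrt u$, $\theta=\arg\varphi$. Substituting into (\ref{wcharode}) and using $A+B=1$ and $A-B=a$ (where $A=(dp+d-1)/(2d-1)$, $B=d(1-p)/(2d-1)$) separates the system into
$$ a x\,\rho'=\rho\,(\rho\cos\theta-1),\qquad x\,\theta'=\rho\sin\theta. $$
First, $g$ solves the linear homogeneous equation $g'=\frac{(1+a)f-1}{ax}\,g$ on $(0,\infty)$, so a single zero would force $g\equiv0$, contradicting $g'(0)=1$; hence $g=\rho\sin\theta\neq0$ for all $x>0$ and $\theta$ stays inside one interval $(k\pi,(k+1)\pi)$. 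Now assume $L>0$. Integrating $u'/u$ gives $\int_1^\infty\frac{1-f(t)}{t}\,dt<\infty$. If $L<1$ this is impossible, since $1-f\ge1-\rho\to1-\sqrt L>0$ makes the integrand $\gtrsim 1/t$. If $L=1$, then $\rho\to1$ and, writing $s=\log x$, the phase obeys $\tfrac{d\theta}{ds}=\rho(s)\sin\theta$ with $\rho\to1$; on its interval $\theta$ is monotone and bounded, hence converges to an odd multiple of $\pi$, forcing $\cos\theta\to-1$ and $1-f\to2$, again contradicting $\int_1^\infty\frac{1-f}{t}\,dt<\infty$. Therefore $L=0$. \emph{This ODE analysis — excluding the spurious limit $L>0$ through the modulus/phase decomposition — is the step I expect to be the main obstacle}, since it is exactly where the nonlinearity and the failure of a naive Gronwall estimate must be confronted.

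Once $|\varphi(x)|\to0$, the rate is routine. Fixing $\varepsilon\in(0,\tfrac12)$ and $X$ with $f\le\varepsilon$ on $[X,\infty)$, integrating $u'/u$ yields $|\varphi(x)|\le C x^{-(1-\varepsilon)/a}$. Feeding this into the integral form of the ODE from Proposition \ref{fgodepropw},
$$ x^{1/a}\varphi(x)=\frac1a\int_0^x t^{\,1/a-1}\Big(A\varphi^2(t)+B|\varphi(t)|^2\Big)\,dt, $$
and bounding the integrand by $C\,t^{\,1/a-1-2(1-\varepsilon)/a}$, whose exponent is $<-1$ for $\varepsilon<\tfrac12$, shows $\sup_x|x^{1/a}\varphi(x)|<\infty$; this gives (\ref{fgdL1estimate}) because $|f|,|g|\le|\varphi|$. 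As $1/a>1$, we get $|\varphi|\in L^1(\RR)$, so Fourier inversion produces the bounded continuous density $p_w$. Finally $\{W_d=0\}\subseteq\{w=0\}$ has probability $0$ since $w$ is non-atomic, and $W_d=Y_d\,\xi^{a}$ from (\ref{WYxi}) with $\xi>0$ a.s. gives $\PP(Y_d=0)=\PP(W_d=0)=0$.

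For infinite divisibility I would argue directly from the continuous-time embedding $(U_t)$: for each fixed $t$ the branching property expresses $W_d=e^{-at}\sum_{b}V^{(b)}$ as a sum over the balls $b$ present at time $t$ of conditionally independent copies of the single-ball limits, which are uniformly negligible as $t\to\infty$. Thus $W_d$ is a limit of conditionally infinitesimal triangular arrays of independent summands and is therefore infinitely divisible, in line with the standard theory of martingale limits of supercritical multitype Markov branching processes (Chapter 5 \cite{athreya2004branching}); the same conclusion can be read off by iterating the distributional fixed-point equation (\ref{markovequW}).
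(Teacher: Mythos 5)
Your analysis of the decay of $\varphi$ is correct, but it takes a genuinely different route from the paper's. The paper works throughout with $F(x)=x^{1/a}f(x)$ and $G(x)=x^{1/a}g(x)$: it rewrites (\ref{fgodew}) as the system (\ref{FGodew}), shows $G>0$ on $(0,\infty)$ via the explicit exponential formula (\ref{soluGbyFw}), proves a dichotomy (either $F>0$ everywhere or $F<0$ for all large $x$), and in each case deduces boundedness of $F$, hence of $G$, by integrating a differential inequality; it never isolates the statement $|\varphi|\to 0$. You instead derive the scalar identity $axu'=2(f-1)u$ for $u=|\varphi|^2$ --- which is correct; the cross terms cancel exactly because $2(dp+d-1)/(2d-1)=1+a$ --- combine it with the modulus/phase system to force $u\to 0$, and then bootstrap the preliminary bound $|\varphi(x)|\le Cx^{-(1-\varepsilon)/a}$ through the integral form of (\ref{wcharode}) to reach $\sup_x|x^{1/a}\varphi(x)|<\infty$. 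The steps check out: the non-vanishing of $g$ on $(0,\infty)$ via the linear ODE $g'=\frac{(1+a)f-1}{ax}g$ (your analogue of (\ref{soluGbyFw})), the exclusion of $L\in(0,1)$, the phase argument for $L=1$ (which could be shortcut: $u$ nonincreasing with $u(0^+)=1$ and limit $1$ forces $u\equiv 1$, hence $f\equiv 1$ and $g\equiv 0$, contradicting $g>0$), and the exponent count $1/a-1-2(1-\varepsilon)/a<-1$ for $\varepsilon<\tfrac12$. Your route is longer but more structured (qualitative decay first, exact rate second); the paper's is shorter but the sign analysis of $F$ is less transparent. The density, $\PP(W_d=0)=\PP(Y_d=0)=0$, and $L^1$ conclusions then agree with the paper.

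However, your proof of infinite divisibility has a genuine gap. In the decomposition $W_d=e^{-at}\sum_b V^{(b)}$ over the balls alive at time $t$, the summands are independent only \emph{conditionally} on $\FF_t$, and the number of summands is random. Khintchine's characterization (limits of row sums of uniformly asymptotically negligible arrays are exactly the infinitely divisible laws) requires unconditional independence within each row; conditional independence is not enough. A counterexample: let $B$ be a fair coin and, given $B$, let row $n$ consist of $n$ summands all equal to $0$ (if $B=0$) or all equal to $1/n$ (if $B=1$); the array is uniformly asymptotically negligible and conditionally i.i.d., yet every row sum equals $B\sim\operatorname{Bernoulli}(1/2)$, which is not infinitely divisible (non-degenerate with bounded support). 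Conditioning produces mixtures, and mixtures of infinitely divisible laws need not be infinitely divisible. The same objection defeats the fallback of iterating (\ref{markovequW}): after $k$ iterations the summands share the random factors $e^{-a\tau}$ attached to internal nodes of the branching tree, so they are not independent. Chapter 5 of \cite{athreya2004branching} supplies the embedding and the martingale limits, not an infinite divisibility statement.

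The paper's argument avoids all of this by an \emph{exact} splitting rather than an asymptotic one: it extends the urn so that the initial ball may carry an arbitrary weight $x>0$ (with an $\operatorname{Exp}(x)$ clock), observes that $(U^{(1)}_t)_{t\geq 0}\stackrel{\mathcal{L}}{=}([n]U^{(1/n)}_t)_{t\geq 0}$, and thereby writes $W_d$ as the sum of $n$ genuinely independent, identically distributed copies of the weight-$1/n$ limit, for every $n$. You should either adopt this splitting or simply invoke Theorem 3.9 of \cite{janson2004functional}, which the paper cites as an alternative source for this fact.
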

That the distribution of $W_d$ is infinitely divisible can be deduced from Theorem 3.9 \cite{janson2004functional}, see also Remark 3.9 \cite{chauvin2011limit}. Here we provide a direct proof for completeness.
\begin{proof}
  For $x>0$, let $F(x):=x^{\frac{1}{a}}f(x)$ and $G(x):=x^{\frac{1}{a}}g(x)$. Then, for $x>0$, (\ref{fgodew}) reads
  \begin{equation} 
    \label{FGodew}
    \left\{\begin{aligned} F^{\prime}(x)&=\frac{1}{a x^{1+ \frac{1}{a} }}\left(F^2(x)+\frac{1-2dp}{2d-1} G^2(x)\right)  \\ G^{\prime}(x)&=\frac{2(dp+d-1)}{a(2d-1)x^{1+ \frac{1}{a} } }F(x) G(x)
   \end{aligned}  \right.
 \end{equation}
Since $g(0)=0, g^{\prime}(0)=1$, we see that $g(x)>0$ on $ (0,\delta_1)$ for some $\delta_1>0$, and thus $G(x)>0$ on $(0,\delta_1)$. The second equation in (\ref{FGodew}) gives 
\begin{equation}
  \label{soluGbyFw}
  G(x)= G(x_0)\exp(\int_{x_0}^x \frac{2 (dp+d-1) F(t)}{a (2d-1) t^{1+ \frac{1}{a} }}dt)
\end{equation}
for some $x_0\in (0,\delta_1)$ whence $G(x)>0$ for all $x>0$. Since $f(0)=1$, $F>0$ on $ (0,\delta_2)$ for some $\delta_2>0$. We claim that either $F(x)>0$ for all $x>0$ or $F(x)<0$ for all large $x>0$. 

\textbf{Proof of the claim}: If $F(t_0)=0$ for some $t_0>0$, then by (\ref{FGodew}), $F^{\prime}(t_0)<0$ so that we can find $t_1>t_0$ with $F(t_1)<0$. Let $t_2:=\inf\{t>t_1: F(t)=0\}$ with the convention that $\inf \emptyset=\infty$. If $t_2<\infty$, then $F^{\prime}(t_2)\geq 0$ which contradicts (\ref{FGodew}). The claim is proved.

In the first case, i.e. $F>0$ on $(0,\infty)$, since $1-2dp<0$, for any $x>0$,
$F^{\prime}(x)< a^{-1}F^2(x)x^{-1- \frac{1}{a}}$. Integrating it yields the following inequality
$$\frac{1}{F(x)}-\frac{1}{F(x_1)}\geq x^{-\frac{1}{a}} - x_1^{-\frac{1}{a}}, \quad x \geq x_1$$ 
where we choose $x_1>0$ such that $f(x_1)<1$ (this is possible since $f \leq 1$ and $f$ is non-constant). Since $F(x_1) < x_1^{\frac{1}{a}}$, we see that $F(x)$ is upper bounded. In the second case, using (\ref{soluGbyFw}), we see that $G^2$ is upper bounded and thus, for some positive constant $C$, 
$$
F^{\prime}(x)\geq  \frac{F^2(x)-C}{a x^{1+ \frac{1}{a} }}\geq  \frac{-C}{ax^{1+ \frac{1}{a} }}
$$
whence $F$ is lower bounded. Therefore, in either case, $|F|$ is bounded for $x>0$ and so is $|G|$ by (\ref{soluGbyFw}). 

Similarly, we define $\tilde{F}(x):=x^{\frac{1}{a}}f(-x)$ and $\tilde{G}(x):=x^{\frac{1}{a}}g(-x)$ for $x>0$. Then, again, $\tilde{F}$ and $\tilde{G}$ satisfy (\ref{FGodew}). Similarly as in (\ref{soluGbyFw}), we can show that $\tilde{G}(x)<0$ for all $x>0$. And using similar arguments, we can prove that $|\tilde{F}|$ and $|\tilde{G}|$ are bounded on $(0,\infty)$, which completes the proof of (\ref{fgdL1estimate}). 

Now we show that the distribution of $W_d$ is infinitely divisible. Note that we may allow $(U_t, t\geq 0)$ defined in  (\ref{defUt}) to start from any initial composition $(x,,0,\cdots, 0)$, where $x> 0$ is not necessarily integer-valued: Initially, this urn has only one ball of direction $e_1$. At any moment, this special ball is equipped with weight $x$ and an Exp(x)-distributed clock, which means each time its clock rings, the clock will be replaced by a new Exp(x)-distributed clock independent of all the other clocks. When the clock of a ball of a direction, say $e_i$, rings, we add a ball of $e_i$ to the urn with probability $p$ or add a ball of the $2 d-1$ remaining directions each with probability $(1-p) /(2 d-1)$. 
Each ball added after time 0 is equipped with weight 1 and an Exp(1)-distributed clock, all the clocks being independent. Let $U^{(x)}_{t}(\pm i)$ be the total weights of $\pm e_i$ in the urn at time $t$, respectively. Define $U^{(x)}_t:=(U^{(x)}_{t}(1),U_{t}^{(x)}(-1),\cdots,U^{(x)}_{t}(d),U^{(x)}_{t}(-d))$. Note that $(U^{(1)}_t,t\geq 0)$ is simply the process $(U_t,t\geq 0)$ defined in  (\ref{defUt}). Now, observe that for any $n\geq 1$,
$$(U^{(1)}_t)_{t\geq 0} \stackrel{\mathcal{L}}{=}  ([n]U^{(\frac{1}{n})}_t)_{t\geq 0} $$
In particular, the distribution of $W_d$ is infinitely divisible. 
\end{proof}

\begin{proof}[Proof of Corollary \ref{angularasym}]
  (i) If $p>p_d$, by (\ref{superlimY}) and Proposition \ref{Lnot0MERW}, $\lim_{n\to \infty}\hat{S_n}=\hat{Y_d}$ a.s.\\
(ii) From the proof of Theorem 3.2 \cite{bercu2019multi}, especially the equation below Equation (5.17) with $u=e_i$, we see that if $p<p_d$, then for any $i\in \{1,2,\cdots,d\}$, we have the law of the iterated logarithm for $(S_n(i))_{n\in \NN}$: 
$$
\limsup _{n \rightarrow \infty}\frac{S_n(i)}{\sqrt{2 n \log \log n}}=-\liminf _{n \rightarrow \infty}\frac{S_n(i)}{\sqrt{2 n \log \log n}}=\frac{1}{\sqrt{d(1-2a)}}, \quad a.s.
$$
In particular, almost surely, $S_n(i)=0$ infinitely often. This shows that almost surely, the limit of $\hat{S}_n$ does not exist. If $p=p_d$ $(d\geq 2)$, we also have the law of iterated logarithm for $(S_n(i))_{n\in \NN}$, see the proof of Theorem 3.5 \cite{bercu2019multi}. The rest of the proof is similar.
\end{proof}

\subsubsection{Distribution of $Y_1$}
\label{dismomenY1}
In this section, we study the distribution of $Y_1$ in (\ref{superlimY}). For $d=1$, Proposition \ref{fgodepropw} implies the following result.
\begin{corollary}
   \label{fgodeprop}
   For $p\in (3/4,1]$, let $\varphi(x)=f(x)+g(x)\mathrm{i}$ be the characteristic function of $W_1$ defined in (\ref{limMERWxiW}) with $d=1$. Then, $\varphi$ satisfies the following ODE 
  \begin{equation}
    \label{Wcharode1d}
    \varphi(x)+ (2p-1) x \varphi^{\prime}(x)=p \varphi^2(x)+(1-p)|\varphi(x)|^2, \quad x\in \RR
  \end{equation}
In particular, $f$ and $g$ solve the following ODEs
  \begin{equation}
  \label{fgode}
\left\{\begin{array}{l}{f(x)+(2 p-1) x f^{\prime}(x)=f^2(x)+(1-2 p) g^2(x)} \\ {g(x)+(2 p-1) x g^{\prime}(x)=2 p f(x) g(x)}\end{array}\right.
\end{equation}
with initial conditions $f(0)=1$ and $g^{\prime}(0)=1$.
\end{corollary}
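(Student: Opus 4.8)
The plan is to obtain this as the special case $d=1$ of Proposition \ref{fgodepropw}, since no new argument is required. First I would observe that for $d=1$ the vector $W_d$ from (\ref{limMERWxiW}) is the scalar $W_1$, and the sum $w := \sum_{i=1}^d W_d(i)$ appearing in Proposition \ref{fgodepropw} reduces to the single term $W_1(1) = W_1$. Hence the characteristic function $\varphi$ of $W_1$ in the statement of the corollary coincides with the function $\varphi$ analysed in Proposition \ref{fgodepropw}, and the hypothesis $p \in (3/4,1]$ matches $p \in (p_d,1]$ because $p_1 = (2\cdot 1 + 1)/(4\cdot 1) = 3/4$ by (\ref{pddef}).

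It then remains only to evaluate the coefficients of the ODE (\ref{wcharode}) and of the system (\ref{fgodew}) at $d=1$. From (\ref{superlimY}) one gets $a = a(1,p) = (2p-1)/(2\cdot 1 - 1) = 2p-1$, and a direct substitution gives $\frac{dp+d-1}{2d-1} = p$, $\frac{d-dp}{2d-1} = 1-p$, $\frac{1-2dp}{2d-1} = 1-2p$, and $\frac{2(dp+d-1)}{2d-1} = 2p$. Plugging these into (\ref{wcharode}) yields (\ref{Wcharode1d}), and plugging them into the system (\ref{fgodew}) yields (\ref{fgode}); the initial conditions $f(0)=1$ and $g'(0)=1$ carry over verbatim from Proposition \ref{fgodepropw}.

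Since the result is a mechanical specialization, I do not anticipate any genuine obstacle. The one point worth checking is that the symmetry input used at the very start of the proof of Proposition \ref{fgodepropw} --- interchanging the coordinates $U_t(i)$ and $U_t(-i)$ for $i \neq 1$ --- degenerates harmlessly when $d=1$: there are no indices $i \neq 1$, so the identity $w \stackrel{\mathcal{L}}{=} W_d(1) - \sum_{i=2}^d W_d(i)$ holds trivially and the branching decomposition (\ref{markovequW}) remains valid with $\alpha \sim \operatorname{Bernoulli}(p)$, which is the $d=1$ value of $(dp+d-1)/(2d-1)$. With this noted, the corollary follows immediately.
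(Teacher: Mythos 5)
Your proposal is correct and matches the paper exactly: the paper offers no separate proof of Corollary \ref{fgodeprop}, stating only that ``For $d=1$, Proposition \ref{fgodepropw} implies the following result,'' which is precisely your specialization. Your coefficient evaluations ($a=2p-1$, $\frac{dp+d-1}{2d-1}=p$, $\frac{d-dp}{2d-1}=1-p$, $\frac{1-2dp}{2d-1}=1-2p$, $\frac{2(dp+d-1)}{2d-1}=2p$, $p_1=3/4$) are all accurate, and your remark that $w=\sum_{i=1}^d W_d(i)$ collapses to $W_1$ when $d=1$ correctly confirms that the two characteristic functions coincide.
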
 
\begin{remark}
  \label{p34proincorrect}
  Coletti and Papageorgiou claimed in Theorem 3.3 \cite{coletti2021asymptotic} that $Y_1$
admits a density by results from \cite{chauvin2011limit}. However, the urn scheme studied in \cite{chauvin2011limit} is different from ours. In their case, given that a “+1” ball is drawn, the respective number of “+1” balls and “-1” balls to be added is deterministic. However, in our case, a “+1” ball is added with probability $p$ and a “-1” ball is added otherwise. As a result, the differential equation (\ref{Wcharode1d}) is different from that in Proposition 5.1 \cite{chauvin2011limit}. They solved that equation with tools from complex analysis which are not applicable in our case due to the non-holomorphic term $|\varphi|^2$. Hence, results in \cite{chauvin2011limit} do not apply to the ERW and a new proof is needed. Unlike Theorem 6.7 \cite{chauvin2011limit}, Proposition \ref{wdensity} provides some estimates on $f$ and $g$, which enables us to conclude that $\varphi \in L^1$ without finding the explicit solution.

Lastly, Theorem 7.4 \cite{chauvin2011limit} proved that the limit of a continuous-time process which corresponds to our $W_1$ admits a density, rather than the discrete-time limit $Y_1$. 
\end{remark}

We may derive the moments of $Y_1$ and $W_1$ from (\ref{fgode}). First, by Theorem 3.5 \cite{pouyanne2008algebraic} or Theorem 3.9 \cite{janson2004functional}, one can show that $Y_1$ is in $L^r$ for any $r\geq 1$, and thus, by (\ref{WYxi}), $W_1 \in L^r$ for any $r\geq 1$. Moreover,
\begin{equation}
  \label{momentWY}
  \EE W_1^r = \EE Y_1^r \EE \xi^{(2p-1)r} =\Gamma((2p-1)r+1)\EE Y_1^r =(2p-1)r\Gamma((2p-1)r)\EE Y_1^r 
\end{equation}
Note that one can retrieve (\ref{momentWY}) by applying Theorem 3.26 \cite{janson2004functional} to the ERW model. This implies that $f$ and $g$ are infinitely differentiable and $f^{(n+1)}(0)=0,g^{(n)}(0)=0$ for even $n\geq 0$.

\begin{corollary}
  \label{momentWYrec}
  For dimension $d=1$ and parameter $p \in (3/4,1)$, denote by $r_n=\EE W_1^n$, $n\in \NN$. Then, $r_1=1$ and 
  \begin{enumerate}[topsep=0pt, partopsep=0pt, leftmargin=5pt, align=left,  label=(\roman*)]
\item For any odd $n \geq 1$, 
  $$
[(n+1)(2p-1)-1]r_{n+1}=2 \sum_{i=1}^{\frac{n-1}{2}} C_n^{2i-1} r_{2i}r_{n+1-2i} +2(2p-1)\sum_{i=1}^{\frac{n+1}{2}} C_n^{2i-1}r_{2i-1}r_{n+2-2i}
  $$
  \item For any even $n \geq 1$, 
$$
n(2p-1)r_{n+1}=2p \sum_{i=1}^{\frac{n}{2}} (C_n^{2i-1}+C_n^{2i}) r_{2i}r_{n+1-2i}
$$
  \end{enumerate}
\end{corollary}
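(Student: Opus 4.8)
The plan is to read the two recursions directly off the coupled system (\ref{fgode}) of Corollary \ref{fgodeprop} by differentiating at the origin. Since $Y_1$, and hence $W_1$, lie in $L^r$ for every $r\geq 1$ (as recalled around (\ref{momentWY})), the characteristic function $\varphi=f+g\mathrm{i}$ is infinitely differentiable with $\varphi^{(n)}(0)=\mathrm{i}^n r_n$. As $W_1$ is real, $f=\mathrm{Re}\,\varphi$ is even and $g=\mathrm{Im}\,\varphi$ is odd, so that
\[
f^{(2m)}(0)=(-1)^m r_{2m},\qquad g^{(2m+1)}(0)=(-1)^m r_{2m+1},
\]
while $f^{(j)}(0)=0$ for odd $j$ and $g^{(j)}(0)=0$ for even $j$. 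I also record $r_1=\EE W_1=1$, which follows from $\EE W_d=e_1$.

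Next I differentiate each equation of (\ref{fgode}) exactly $n$ times via the Leibniz rule and evaluate at $x=0$. In the linear part only the $j=0$ and $j=1$ terms of $(x h')^{(n)}$ survive, and at the origin they reduce to $n\,h^{(n)}(0)$; hence the left-hand side of either equation becomes $h^{(n)}(0)\,[1+(2p-1)n]$ with $h=f$ or $h=g$. On the right the products $f^2$, $g^2$ and $fg$ turn into convolution sums $\sum_j C_n^j(\cdot)^{(j)}(0)(\cdot)^{(n-j)}(0)$, in which the parities above annihilate half of the terms. Taking $n=2m$ in the $f$-equation and $n=2m+1$ in the $g$-equation and cancelling the common factor $(-1)^m$ gives
\[
r_{2m}\,[1+2m(2p-1)]=\sum_{\ell=0}^{m}C_{2m}^{2\ell}r_{2\ell}r_{2m-2\ell}+(2p-1)\sum_{\ell=1}^{m}C_{2m}^{2\ell-1}r_{2\ell-1}r_{2m-2\ell+1}
\]
and
\[
r_{2m+1}\,[1+(2m+1)(2p-1)]=2p\sum_{\ell=0}^{m}C_{2m+1}^{2\ell}r_{2\ell}r_{2m+1-2\ell},
\]
respectively.

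Finally I isolate the top moment and match the binomials. Because $r_0=1$, the $\ell=0$ term of each quadratic sum (and, in the first identity, also the $\ell=m$ term) is a constant multiple of the top moment; transferring these to the left collapses the linear coefficient to $2m(2p-1)-1=(n+1)(2p-1)-1$ in the even case and to $2m(2p-1)=n(2p-1)$ in the odd case, which are precisely the prefactors in (i) and (ii) after setting $n+1=2m$, resp. $n+1=2m+1$. For (ii) Pascal's identity $C_{2m+1}^{2i}=C_{2m}^{2i-1}+C_{2m}^{2i}$ is exactly what converts the display into the stated form, with no further work. The even case (i) is the only delicate point: the coefficients $C_{2m}^{2\ell}$ and $C_{2m}^{2\ell-1}$ arising above carry upper indices of both parities, whereas the statement uses only $C_n^{2i-1}=C_{2m-1}^{2i-1}$. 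I would resolve this by first applying Pascal ($C_{2m}^{2\ell}=C_{2m-1}^{2\ell-1}+C_{2m-1}^{2\ell}$ and $C_{2m}^{2\ell-1}=C_{2m-1}^{2\ell-2}+C_{2m-1}^{2\ell-1}$) and then symmetrizing each sum under the reflection $\ell\mapsto m-\ell$, resp. $\ell\mapsto m+1-\ell$: this reflection fixes the products $r_{2\ell}r_{2m-2\ell}$ and $r_{2\ell-1}r_{2m+1-2\ell}$ but sends $C_{2m-1}^{2\ell}\mapsto C_{2m-1}^{2\ell-1}$ and $C_{2m-1}^{2\ell-2}\mapsto C_{2m-1}^{2\ell-1}$, so that both coefficients double up into $2\,C_{2m-1}^{2i-1}$, yielding (i). Thus the substantive work is this combinatorial reflection rather than the (routine) differentiation of the ODEs.
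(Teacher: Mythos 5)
Your proposal is correct and follows essentially the same route as the paper: differentiate the system (\ref{fgode}) at the origin via Leibniz, use the parity of $f$ and $g$ together with $f^{(2m)}(0)=(-1)^m r_{2m}$, $g^{(2m+1)}(0)=(-1)^m r_{2m+1}$ (justified by $W_1\in L^r$ for all $r$), isolate the top moment, and match binomial coefficients. The only difference is presentational: the paper compresses the combinatorics into a single ``one can show'' step stating the derivative identities with coefficients $C_n^{2i-1}$ and $C_n^{2i-1}+C_n^{2i}$ directly, whereas you make explicit the Pascal-plus-reflection argument ($C_{2m}^{2\ell}=C_{2m-1}^{2\ell-1}+C_{2m-1}^{2\ell}$ followed by the symmetry $\ell\mapsto m-\ell$, resp.\ $C_{2m+1}^{2i}=C_{2m}^{2i-1}+C_{2m}^{2i}$) that produces exactly those coefficients.
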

\begin{remark}
  The first four moments of $Y_1$ were given in Theorem 3.8, \cite{bercu2017martingale}. One can deduce from Corollary \ref{momentWYrec} and (\ref{momentWY}) that $\EE Y_1^5=\frac{60p(16p^2-9p-1)}{(4p-3)^2(8p-5)\Gamma(10p-4)}, \cdots$. As mentioned in that paper, the first four moments of $Y_1$ and $W_1$ imply that they are non-Gaussian.
\end{remark}
\begin{proof}[Proof of Corollary \ref{momentWYrec}]
  Let $f$, $g$ be as in Corollary \ref{fgodeprop} with $f(0)=1$ and $g^{\prime}(0)=1$. For $n\geq 1$, by differentiating each side of (\ref{fgode}) $n$ times and applying the Leibniz rule, one can show that for any odd $n \geq 1$, 
    $$
    \begin{aligned}
     [(n+1)(2p-1)-1]f^{(n+1)}(0)&=2 \sum_{i=1}^{\frac{n-1}{2}} C_n^{2i-1} f^{(2i)}(0)f^{(n+1-2i)}(0)\\
     &+2(1-2p)\sum_{i=1}^{\frac{n+1}{2}} C_n^{2i-1}g^{(2i-1)}(0)g^{(n+2-2i)}(0) 
    \end{aligned}
    $$
and for any even $n \geq 1$, 
  $$
  n(2p-1)g^{(n+1)}(0)=2p \sum_{i=1}^{\frac{n}{2}} (C_n^{2i-1}+C_n^{2i}) f^{(2i)}(0)g^{(n+1-2i)}(0)
  $$
By dominated convergence theorem, $\mathrm{i}^{n-1}r_n=g^{(n)}(0)$ for odd $n\geq 1$ and $\mathrm{i}^n r_n=f^{(n)}(0)$ for even $n\geq 1$, which implies the desired result. 
\end{proof}

The following result says that these moments indeed characterize the distributions of $W_1$ and $Y_1$. 
\begin{proposition}
 \label{momcharWY1}
 Let $(r_n)_{n\in \NN}$ be as in Corollary \ref{momentWYrec}. Then, 
  \begin{equation}
    \label{rnbd}
    |r_n| \leq (\frac{p}{2p-1})^{n-1} n!, \quad n\geq 1
  \end{equation}
  In particular, the distributions of $W_1$ and $Y_1$ are determined by the moments given in Corollary \ref{momentWYrec}. Moreover, the characteristic functions $\varphi$ and $\phi$ of $W_1$ and $Y_1$ are given by 
  \begin{equation}
    \label{chard1powerseries}
    \varphi(x) =\sum_{n=0}^{\infty} \frac{r_n\mathrm{i}^n }{n !}x^n,\ |x| < \frac{2p-1}{p}; \quad \phi(x)=\sum_{n=0}^{\infty} \frac{r_n\mathrm{i}^n }{(2p-1)n \Gamma((2p-1)n ) n !}x^n , \ x\in \RR
  \end{equation}
  In addition, the distribution of $W_1$ is infinitely divisible and its density function $p_W$ is supported on the whole real line.
\end{proposition}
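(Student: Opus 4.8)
The whole proposition is driven by the moment estimate (\ref{rnbd}); once the geometric--factorial growth rate of $(r_n)$ with ratio $b:=p/(2p-1)$ is known, determinacy, both power series in (\ref{chard1powerseries}), and the support statement all follow from standard analytic facts. So the plan is to establish (\ref{rnbd}) first by strong induction on $n$ from the two recursions of Corollary \ref{momentWYrec}, and then to read off the consequences.

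For the inductive step I would insert the hypothesis $|r_k|\le b^{k-1}k!$ into the right-hand sides, use the triangle inequality, and collapse the resulting binomial--factorial sums with the telescoping identity $C_n^{m}\,m!\,(n-m)!=n!$ together with Pascal's rule $C_n^{2i-1}+C_n^{2i}=C_{n+1}^{2i}$. In the recursion for even $n$ (which produces the odd-order moments) these identities turn $\sum_i(C_n^{2i-1}+C_n^{2i})|r_{2i}||r_{n+1-2i}|$ into $\tfrac n2(n+1)!$, and after dividing by the prefactor $n(2p-1)$ the bound closes \emph{exactly} to the target $b^{n}(n+1)!$. The recursion for odd $n$ (which produces the even-order moments) is the delicate one: the same bookkeeping leaves a residual factor $\frac{(2p-1)[(n-1)+(2p-1)(n+1)]}{2p[(n+1)(2p-1)-1]}$ multiplying $b^{n}(n+1)!$, and this factor exceeds $1$ for every $n$ (its numerator minus its denominator equals $2-2p>0$). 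Thus the crude triangle-inequality induction does not close at the even indices with the sharp constant, and this is the step I expect to be the main obstacle: the honest resolution is either to exploit the sign cancellations among the odd-order moments $r_{2i-1}$ in that recursion (which the modulus estimate discards) or to strengthen the inductive quantity, rather than to inflate $b$ into $Cb$ --- the latter fails because the recursion is quadratic and would square the constant.

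Granting (\ref{rnbd}), determinacy and the first series are immediate. The bound gives $\sum_n \frac{|r_n|}{n!}t^n\le \sum_n b^{n-1}t^n<\infty$ for $|t|<1/b=(2p-1)/p$, so the moment generating function of $W_1$ is finite near the origin; hence the law of $W_1$ is moment-determinate and $\varphi$ is real-analytic there with $\varphi^{(n)}(0)=\mathrm{i}^n r_n$, yielding $\varphi(x)=\sum_n \frac{r_n\mathrm{i}^n}{n!}x^n$ on $|x|<(2p-1)/p$. For $Y_1$ I would pass through (\ref{momentWY}), which gives $\EE Y_1^n=r_n/\Gamma((2p-1)n+1)$; since Stirling yields $\Gamma((2p-1)n+1)^{1/n}\to\infty$ (using $2p-1>\tfrac12$), we get $(|\EE Y_1^n|/n!)^{1/n}\to 0$, so $\phi$ is entire, equals the second series in (\ref{chard1powerseries}) for all $x$, and $Y_1$ is moment-determinate as well. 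Infinite divisibility of $W_1$ is already Proposition \ref{wdensity} specialised to $d=1$.

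It remains to show $\operatorname{supp}(W_1)=\RR$. From (\ref{rnbd}) the moment generating function has finite radius $1/b$, so $W_1$ is not compactly supported and its support $\Sigma$ is closed and unbounded; since $W_1$ has a density (Proposition \ref{wdensity}) and is nondegenerate ($\operatorname{Var} W_1=1/(4p-3)>0$), $\Sigma$ contains an interval. Now specialise the fixed-point relation (\ref{markovequW}) to $d=1$, where $\alpha\sim\operatorname{Bernoulli}(p)$ and $a=2p-1$: on the event $\{\alpha=0\}$ of probability $1-p>0$ we have $W_1\stackrel{\mathcal{L}}{=}e^{-(2p-1)\tau_1}(W_1'-W_1'')$ with the three factors independent. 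The support of $W_1'-W_1''$ is $\overline{\Sigma-\Sigma}$, a symmetric closed set containing a neighbourhood of $0$ and unbounded in both directions, and $e^{-(2p-1)\tau_1}$ has positive density on the whole of $(0,1)$. Since $s\mapsto sy$ sweeps out $(0,y)$ as $s$ ranges over $(0,1)$, scaling an unbounded symmetric set by the full interval $(0,1)$ produces all of $\RR$; hence $\Sigma=\RR$, i.e.\ $p_W>0$ everywhere.
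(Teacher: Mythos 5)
Your strategy is the paper's own: prove (\ref{rnbd}) by induction from the two recursions of Corollary \ref{momentWYrec}, then get moment-determinacy from finiteness of the moment generating function near $0$, entirety of $\phi$ from Stirling via (\ref{momentWY}), infinite divisibility from Proposition \ref{wdensity}, and the support statement from the fixed-point identity (\ref{markovequW}). Your bookkeeping of the two induction steps is also exactly the paper's, and the obstruction you isolate at odd $n$ is real: the paper performs the same triangle-inequality computation, arrives at $|r_{k+1}|\le (k+1)!\,b^{k-1}\frac{pk+p-1}{(2p-1)k+2p-2}$ with $b=\frac{p}{2p-1}$, and then asserts this is $\le (k+1)!\,b^{k}$; cross-multiplying, that last inequality fails by a margin of $1-p>0$ for every $p\in(3/4,1)$, which is your ``residual factor exceeds $1$'' in different notation. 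In fact the stated bound, not just the proof, is false: at $n=1$ the recursion is the single-term identity $(4p-3)r_2=2(2p-1)r_1^2$, so $r_2=\frac{2(2p-1)}{4p-3}$, which exceeds the claimed bound $\frac{2p}{2p-1}$ by $\frac{2(1-p)}{(4p-3)(2p-1)}>0$. So you have caught a genuine error in the paper at precisely the step you flagged.

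Where your proposal itself goes wrong is in the two ``honest resolutions'' you offer and in the one you dismiss. There is no sign cancellation to exploit: the failure already occurs at $n=1$, where the right-hand side is a single positive term, so (\ref{rnbd}) is irreparably false as stated and no strengthening of the induction can recover it. Conversely, enlarging the constant does \emph{not} ``square'': with the hypothesis kept in the form $|r_k|\le C^{k-1}k!$, every product of two bounds carries total power $C^{(2i-1)+(n-2i)}=C^{n-1}$, exactly one power short of the target $C^{n}$ uniformly in the split, so each step closes as soon as $C$ dominates the step ratio. That ratio is $\frac{p}{2p-1}$ at even $n$ and $\frac{p(n+1)-1}{(2p-1)(n+1)-1}$ at odd $n$; the latter is decreasing in $n$ with maximum $\frac{2p-1}{4p-3}$ at $n=1$, and since $(2p-1)^2-p(4p-3)=1-p>0$ we have $\frac{2p-1}{4p-3}>\frac{p}{2p-1}$. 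Taking $C=\frac{2p-1}{4p-3}$ therefore closes both steps and yields the correct estimate $|r_n|\le\left(\frac{2p-1}{4p-3}\right)^{n-1}n!$, sharp at $n=2$; with it, every conclusion of the proposition survives verbatim except that the radius $\frac{2p-1}{p}$ in (\ref{chard1powerseries}) must be replaced by $\frac{4p-3}{2p-1}$. Finally, one flaw in your support argument: you infer that $\operatorname{supp}(W_1)$ is unbounded because ``(\ref{rnbd}) gives the m.g.f.\ finite radius,'' but an upper bound on moments can only bound the radius from below, never certify it finite. Unboundedness should instead come from the fact that a non-degenerate infinitely divisible law cannot have compact support, or from the Steutel--van Harn theorem the paper cites (a continuous infinitely divisible law is supported on a half-line or on $\RR$). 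With that repaired, your scaling argument via $W_1\stackrel{\mathcal{L}}{=}e^{-(2p-1)\tau_1}(W_1'-W_1'')$ on the reflection event is a sound alternative to the paper's route, which instead assumes support $[\alpha,\infty)$ and derives the contradiction $F_W(z)>0$ for $z<\alpha$.
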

\begin{proof}
  We argue by induction. Note that (\ref{rnbd}) is true when $n=1$. Now assume that (\ref{rnbd}) holds for all $n\leq k$ for some $k\geq 1$. If $k$ is odd, then by Corollary \ref{momentWYrec} 
  $$
  \begin{aligned}
      |r_{k+1}| &\leq \frac{2 k! }{(2p-1)k+2p-2}(\frac{p}{2p-1})^{k-1}[\sum_{i=1}^{\frac{k-1}{2}} (2i) +(2p-1)\sum_{i=1}^{\frac{k+1}{2}} (k+2-2i)] \\
      &= (k+1)!(\frac{p}{2p-1})^{k-1} \frac{pk +p-1 }{(2p-1)k+2p-2}\leq (k+1)!(\frac{p}{2p-1})^{k}
  \end{aligned}
  $$
  If $k$ is even, similarly we have 
  $$
  |r_{k+1}| \leq  \frac{2p k! }{(2p-1)k}(\frac{p}{2p-1})^{k-1}[\sum_{i=1}^{\frac{k}{2}} (2i) +\sum_{i=1}^{\frac{k}{2}} (n+1-2i)]= (k+1)!(\frac{p}{2p-1})^{k}
  $$
  Therefore, (\ref{rnbd}) holds for all $n \geq 1$. In particular, the power series $\sum_k r_k x^k / (k!)$ has a positive radius of convergence lower bounded by $(2p-1)/p$. By Theorem 30.1 \cite{billingsley2012probability}, the distribution of $W_1$ is determined by its moments $(r_n)_{n\in \NN}$ and for any $|x| < (2p-1)/p$, $\varphi(x)$ is given by (\ref{chard1powerseries}). By (\ref{momentWY}), the results for $Y_1$ can be proved similarly and note that by Stirling's approximation, the radius of the second power series in (\ref{chard1powerseries}) is infinity.

  By Proposition \ref{wdensity}, the distribution of $W_1$ is infinitely divisible and $W_1$ admits a density $p_W$. We adapt the proof of Proposition 7.1 \cite{chauvin2011limit} to show that $p$ is supported on the whole real line. General results on infinite divisibility (see e.g. Theorem 8.4 \cite{steutel2004infinite}) ensure that the support of an infinitely divisible random variable having a continuous probability distribution function is either a half-line or $\RR$. We argue by contradiction. Suppose that the support of $W_1$ is $[\alpha,\infty)$ for some $\alpha \in \RR$. By (\ref{markovequW}), conditional on $U_{\tau_1}=(1,1)$, $W_1$ is equal in law to 
  $$e^{- (2p-1)\tau_1}([1] W_1+[1](-W_1))$$
  Let $F_W(x)$ be the distribution function of $W_1$. Note that $U_{\tau_1}=(1,1)$ occurs with probability $(1-p)$ and the density function of $-W_1$ is $p_W(-x)$. Then, for any $z<\alpha$, we have
  $$
  \begin{aligned}
    F_W(z) &\geq (1-p) \PP( e^{- (2p-1)\tau_1}([1] W_1+[1](-W_1)) \leq z) \\
       &=(1-p)  \PP( [1] W_1+[1](-W_1) \leq e^{(2p-1)\tau_1}z) \\
       &= (1-p) \int_0^{\infty} e^{-t}  \int_{-\infty}^{\infty} F_W( e^{(2p-1)t}z-y) p_W(-y) d y   dt \\
       &=(1-p) \int_0^{\infty} e^{-t}  \int_{\alpha}^{\infty} F_W( e^{(2p-1)t}z+x) p_W(x) d x   dt>0
  \end{aligned}
  $$
  which contradicts our assumption. Similarly, the support of $p$ can not be of the form $(-\infty,\beta]$ for some $\beta \in \RR$. This completes the proof.
\end{proof}

\subsection{MERW on $\ZZ^2$: Lyapunov functions method}
\label{2dERWsec}

\subsubsection{Proof of Proposition \ref{Snsio}}
\label{proofweakerrec}

\begin{proof}[Proof of Proposition \ref{Snsio}]
  For $k\geq 1$, define 
  $$\tau_k:=\inf\{j\geq k: \|S_j\| \leq j^s \}$$
  We claim that for any $k\geq 1$, $\tau_k<\infty$ a.s.. Note that the claim implies the desired result.

  \textbf{Proof of the claim}: We may assume that $\|S_k\| > k^s$ and $k$ is large enough. For $x\in \ZZ^2$ with $\|x\|\geq 1$, we let $f(x):=\sqrt{\log \|x\|}$. Note that the function $f$ was used in Section 2.3 \cite{popov2021two} to prove the recurrence of the 2-dimensional SRW. For $e =\pm e_i$, $i=1,2$,
  $$
\begin{aligned}
  \sqrt{\log \|x+e\|}-\sqrt{\log \|x\|}&=\sqrt{\log \|x\|} \left(\left(1+\frac{1}{\log \|x\|^2}\log \left(1+\frac{2 x \cdot e+1}{\|x\|^2}\right)\right)^{1/2}-1\right)
\end{aligned}
  $$ 
  Using Taylor expansions: as $x\to 0$,
  \begin{equation}
    \label{taylorexp}
      \log(1+x)= x-\frac{x^2}{2}+O(|x|^3); \quad \sqrt{1+x}=1+\frac{x}{2}-\frac{x^2}{8}+O(|x|^3)
  \end{equation}
   one can show that for $x \in \ZZ^2$ and $e = \pm e_i$, $i=1,2$, as $\|x\| \to \infty$,
  \begin{equation}
    \label{taylorsqrtln1x}
     \begin{aligned}
    \sqrt{\log \|x+e\|}-\sqrt{\log \|x\|}&=\sqrt{\log \|x\|}\left(\frac{1}{2 \log \|x\|}\left(\frac{ x \cdot e}{\|x\|^2}+\frac{1}{2\|x\|^2}-\frac{(x \cdot e)^2}{\|x\|^4}+O\left(\|x\|^{-3}\right)\right)\right. \\ &\left.-\frac{1}{8 \log^2 \|x\|} \frac{(x \cdot e)^2}{\|x\|^4}+O\left(\|x\|^{-3}(\log \|x\|)^{-2}\right)\right) 
  \end{aligned}
  \end{equation}
   Then, by (\ref{taylorsqrtln1x}) with $x=S_n,e=\sigma_{n+1}$ and Lemma \ref{computationlem} (ii), (iii), we have
  $$
  \begin{aligned}
    &\quad\ \mathbb{E}\left[f\left(S_{n+1}\right)-f\left(S_n\right) \mid \FF_n\right] \\
    &=\frac{a}{2n\sqrt{\log\|S_n\|}}-\frac{1}{2\|S_n\|^2 \log^{3/2}\|S_n\|}\left(\frac{a \log \|S_n\| }{\|S_n\|^2}\sum_{i=1}^2 (\frac{b_n(i)}{n}-\frac{1}{2})S_n(i)^2 \right. \\ 
    &\left.\quad\ +\frac{1}{8}+\frac{a}{4\|S_n\|^2}\sum_{i=1}^2 (\frac{b_n(i)}{n}-\frac{1}{2})S_n(i)^2+O\left(\frac{\log \|S_n\|}{\|S_n\|}\right)\right)
  \end{aligned}
$$
Since $a<1/2$ and $S_n(i)^2 \leq \|S_n\|^2$, we have 
\begin{equation}
  \label{2dEfdiff}
  \begin{aligned}
    &\quad\ \mathbb{E}\left[f\left(S_{n+1}\right)-f\left(S_n\right) \mid \FF_n\right]-\frac{a}{2n\sqrt{ \log \|S_n\|}} \\
    &\leq -\frac{1}{2\|S_n\|^2 \log^{3/2}\|S_n\|}\left(\frac{1}{10}-\log \|S_n\|\sum_{i=1}^2|\frac{b_n(i)}{n}-\frac{1}{2}|\right)
  \end{aligned}
\end{equation}
  if $S_n \notin B(0,r)$ for some large $r$. Note that here we can replace $1/10$ by any number strictly less than $1/8$ if we choose $r$ large enough. 
  
  We first assume that $a\geq 0$. If $k+n+1\leq \tau_k$,
  \begin{equation}
    \label{upbdsumajS}
      \sum_{j=k}^{k+n}\frac{a}{2j\sqrt{\log \|S_j\|}} \leq \sum_{j=k}^{k+n } \frac{a}{2\sqrt{s}j \sqrt{\log j}} \leq  \frac{a}{2 \sqrt{s}} \int_{k-1}^{k+n} \frac{1}{x \sqrt{\log x}} dx  < \sqrt{s \log (k+n)} < \sqrt{\log \|S_{k+n}\|}
  \end{equation}
 Define 
  $$
  T_k:=\inf\{j \geq k:  \ \sum_{i=1}^2|\frac{b_j(i)}{j}-\frac{1}{2}|\geq \frac{1}{j^{1/4}}\}
  $$
Note that if $k+n +1\leq  T_k$, then 
\begin{equation}
  \label{lnSdiffsmalll}
  \log \|S_{k+n}\|\sum_{i=1}^2|\frac{b_{k+n}(i)}{k+n}-\frac{1}{2}| \leq \frac{\log \|S_{k+n}\|}{(k+n)^{\frac{1}{4}}}\leq \frac{\log (k+n)}{(k+n)^{\frac{1}{4}}} <\frac{1}{10}
\end{equation}
if we assume that $k$ is large enough. Thus, by (\ref{2dEfdiff}), (\ref{upbdsumajS}) and (\ref{lnSdiffsmalll}), we see that $\{Y_n\}_{n\in \NN}$ defined by $Y_0=f(S_k)$ and
  $$
 Y_{n+1}:=f(S_{(k+n+1)\wedge \tau_k \wedge T_k}) -   \sum_{j=k}^{(k+n+1)\wedge \tau_k \wedge T_k-1}\frac{a}{2j\sqrt{\log \|S_j\|}} ,\quad  n\in \NN
  $$
  is a supermartingale. Since $f\left(S_{n+1}\right)-f\left(S_n\right)$ is lower bounded, by (\ref{upbdsumajS}), $\{Y_n\}_{n\in \NN}$ is lower bounded. In particular, $(Y_n)$ converges a.s.. By (\ref{upbdsumajS}) and the law of iterated logarithm for MERWs (\ref{ltilSdiff}), a.s. on $\{\tau_k=\infty\} \cap \{T_k=\infty\}$ 
$$
\begin{aligned}
  \limsup_{n\to \infty} Y_n &\geq \limsup_{n\to \infty} (\sqrt{\log \|S_{k+n+1}\|} - \sqrt{s \log (k+n)} )\\ 
  &=\limsup_{n\to \infty} \sqrt{\log (k+n)} (\sqrt{\frac{\log \|S_{k+n+1}\|}{\log (k+n)}} - \sqrt{s} )=\infty
\end{aligned}
$$
where we used the assumption $s<1/2$. Thus,
  $$
  \PP(\{\tau_k=\infty\} \cap \{T_k=\infty\}) =0
  $$
Indeed, we have proved that for any $m\geq k$,
  \begin{equation}
    \label{taukTmprob0}
      \PP(\{\tau_k=\infty\} \cap \{T_m=\infty\}) \leq \PP(\{\tau_m=\infty\} \cap \{T_m=\infty\}) =0
  \end{equation}
  where we used the fact that $\{\tau_k =\infty\} \subset \{\tau_m=\infty\}$. 
The case $a\leq 0$ is even simpler since by (\ref{2dEfdiff}) and (\ref{lnSdiffsmalll}), $\{f(S_{(k+n+1)\wedge \tau_k \wedge T_k}) \}_{n\in \NN}$ itself is a lower bounded supermartingale, and thus (\ref{taukTmprob0}) is still true. 
  By Lemma \ref{ratecon1overd}, $n^{\frac{1}{4}}(b_n(1)/n - 1/2)$ converges to 0 a.s. whence
  $$
  \PP(\bigcup_{m\geq k} \{T_m=\infty\} )=1
  $$
  Combined with (\ref{taukTmprob0}), this implies that $\PP(\tau_k<\infty)=1$. 
\end{proof}

\subsubsection{Proof of Theorem \ref{phasetranZ2MERW}}
\label{proof2d}
We first prove Proposition \ref{z258logn}, i.e. the critical case.

\begin{proof}[Proof of Proposition \ref{z258logn}]
 For any $n>1$, let $x_n:=\log (\|S_n\|^2+n^{\frac{4}{5}})/\log n$. Then, it is equivalent to proving that
  $$
  \lim_{n\to \infty}x_n=1, \quad a.s.
  $$
Now, using Taylor expansion, we see that there exists an $\varepsilon>0$ such that if $|x|\leq \varepsilon$, then
\begin{equation}
 \label{tayineln1x}
 \log (1+x) \geq x -\frac{1}{2}x^2-|x|^3
\end{equation}
Thus, there exists a positive integer $m>1$ such that for all $n\geq m$,
\begin{equation}
  \label{xnrecur58}
\begin{aligned}
  x_{n+1}-x_n&=\frac{\log \left(1+\frac{2S_n\cdot \sigma_{n+1}+1+(n+1)^{\frac{4}{5}}-n^{\frac{4}{5}}}{\|S_n\|^2+n^{\frac{4}{5}} }\right) }{\log (n+1)}+\log (\|S_n\|^2+n^{\frac{4}{5}}) (\frac{1}{\log (n+1)}-\frac{1}{\log n}) \\
  &\geq \frac{1}{\log (n+1)}\left(u_{n+1}-\frac{1}{2}u_{n+1}^2-|u_{n+1}|^3\right)-\frac{\log (\|S_n\|^2+n^{\frac{4}{5}})}{n \log^2 n}
\end{aligned}
\end{equation}
where 
$$
u_{n+1}:=\frac{2S_n\cdot \sigma_{n+1}+1+(n+1)^{\frac{4}{5}}-n^{\frac{4}{5}}}{\|S_n\|^2+n^{\frac{4}{5}} }
$$
Indeed, for some universal constant $c_1$ independent of $n$,
\begin{equation}
  \label{un25bd}
  |u_{n+1}|\leq \frac{2\|S_n\|}{\|S_n\|^2+n^{\frac{4}{5}} }+ \frac{2}{\|S_n\|^2+n^{\frac{4}{5}} } \leq \frac{c_1}{n^{\frac{2}{5}}}, \quad \forall n>1
\end{equation}
In particular, $|u_{n+1}|<\varepsilon$ for large $n$ (say $n\geq m$) so that we can apply (\ref{tayineln1x}) in (\ref{xnrecur58}).

Since $\{x_n\}$ is upper-bounded, it remains to show that
\begin{equation}
  \label{sumnegabd}
  \EE\left(\sum_{n=m}^{\infty} \EE (x_{n+1}-x_n|\FF_n)^{-} \right)<\infty
\end{equation}
where we recall that $x^{-}=\max (-x, 0)$. Indeed, (\ref{sumnegabd})  implies that 
$$
x_{n+1}+\sum_{j=m}^{n}\EE (x_{j+1}-x_{j}|\FF_j)^{-}, \quad n\geq m
$$
is an $L^1$-bounded submartingale and thus converges a.s.. Moreover, (\ref{sumnegabd}) implies that $\sum_{n=m}^{\infty} \EE (x_{n+1}-x_n|\FF_n)^{-}$ converges a.s. and therefore $x_n$ converges a.s.. By the law of the iterated logarithm for the MERW (\ref{ltilScrit}), we have $\limsup_{n\to \infty}x_n = 1$ a.s. and thus $\lim_{n\to \infty}x_n = 1$.

Since $\|S_n\|^2\leq n^2$, we have
\begin{equation}
  \label{Snlognn1}
  \sum_{n=m}^{\infty}\frac{\log (\|S_n\|^2+n^{\frac{4}{5}})}{n \log n}\left(\frac{1}{\log n}-\frac{1}{\log(n+1)}\right) \leq \sum_{n=m}^{\infty}\frac{\log (n^2+n^{\frac{4}{5}})}{n \log n}\frac{1}{n\log^2 n} <c_2
\end{equation}
for some universal constant $c_2$. By (\ref{xnrecur58}), (\ref{un25bd}) and (\ref{Snlognn1}), to prove (\ref{sumnegabd}), it suffices to show that 
\begin{equation}
  \label{lognunlogS}
  \EE\left(\sum_{n=m}^{\infty}\frac{1}{\log (n+1)} \EE (u_{n+1}-\frac{1}{2}u_{n+1}^2-\frac{\log (\|S_n\|^2+n^{\frac{4}{5}})}{n \log n}|\FF_n)^{-} \right) <\infty
\end{equation}

Using Lemma \ref{computationlem} (iii) with $a=1/2$, we have
$$
|\mathbb{E}\left(\left[S_n \cdot \sigma_{n+1}\right]^2 \mid \mathcal{F}_n\right)-\frac{1}{2}\left\|S_n\right\|^2|\leq \frac{\|S_n\|^2}{2}\sum_{i=1}^2 |\frac{b_n(i)}{n}-\frac{1}{2}| 
$$
Using this inequality and Lemma \ref{computationlem} (ii), we have 
\begin{equation}
  \label{condEunun2}
  \begin{aligned}
  &\quad \EE (u_{n+1}-\frac{1}{2}u_{n+1}^2|\FF_n)\geq \frac{\|S_n\|^2}{n(\|S_n\|^2+n^{\frac{4}{5}}) }+\frac{1+(n+1)^{\frac{4}{5}}-n^{\frac{4}{5}}}{\|S_n\|^2+n^{\frac{4}{5}}} -\frac{\|S_n\|^2}{(\|S_n\|^2+n^{\frac{4}{5}})^2}\\ 
  &-\frac{\sum_{i=1}^2|\frac{b_n(i)}{n}-\frac{1}{2}|}{\|S_n\|^2+n^{\frac{4}{5}}} -\frac{\|S_n\|^2(1+(n+1)^{\frac{4}{5}}-n^{\frac{4}{5}} )}{n(\|S_n\|^2+n^{\frac{4}{5}})^2} -\frac{(1+(n+1)^{\frac{4}{5}}-n^{\frac{4}{5}} )^2}{2(\|S_n\|^2+n^{\frac{4}{5}})^2} \\
  &\geq \frac{\|S_n\|^2}{n(\|S_n\|^2+n^{\frac{4}{5}}) }+\frac{n^{\frac{4}{5}}}{(\|S_n\|^2+n^{\frac{4}{5}})^2}-\frac{\sum_{i=1}^2|\frac{b_n(i)}{n}-\frac{1}{2}|}{\|S_n\|^2+n^{\frac{4}{5}}}-\frac{2\|S_n\|^2}{n(\|S_n\|^2+n^{\frac{4}{5}})^2} -\frac{2}{(\|S_n\|^2+n^{\frac{4}{5}})^2} 
\end{aligned}
\end{equation}
where we used that $0<(n+1)^{\frac{4}{5}}-n^{\frac{4}{5}}\leq 1$. By Lemma \ref{ratecon1overd}, there exists a constant $c_3$ such that for any $n>1$,
$$
  \EE \sum_{i=1}^2|\frac{b_n(i)}{n}-\frac{1}{2}| \leq \frac{c_3 \sqrt{\log n}}{\sqrt{n}}
$$
which implies that
\begin{equation}
  \label{unun23finite}
\EE \left( \sum_{n=m}^{\infty}\frac{\sum_{i=1}^2|\frac{b_n(i)}{n}-\frac{1}{2}|}{\|S_n\|^2+n^{\frac{4}{5}}} \right)<\infty
\end{equation}
Moreover,
\begin{equation}
  \label{unun245finite}
  \sum_{n=m}^{\infty}\frac{2\|S_n\|^2}{n(\|S_n\|^2+n^{\frac{4}{5}})^2}\leq \sum_{n=m}^{\infty}\frac{2}{n^{\frac{9}{5}}} <\infty , \quad  \sum_{n=m}^{\infty}\frac{2}{(\|S_n\|^2+n^{\frac{4}{5}})^2}\leq \sum_{n=m}^{\infty}\frac{2}{n^{\frac{8}{5}}} <\infty 
\end{equation}
By (\ref{condEunun2}), (\ref{unun23finite}) and (\ref{unun245finite}), to prove (\ref{lognunlogS}), it suffices to show that
\begin{equation}
  \label{sumvn-bd}
  \EE \sum_{n=m}^{\infty} v_n^{-} <\infty
\end{equation}
where
$$
v_n:=\frac{1}{\log (n+1)} \left(\frac{\|S_n\|^2}{n(\|S_n\|^2+n^{\frac{4}{5}}) }+\frac{n^{\frac{4}{5}}}{(\|S_n\|^2+n^{\frac{4}{5}})^2}-\frac{\log (\|S_n\|^2+n^{\frac{4}{5}})}{n \log n} \right)
$$
\textbf{Proof of (\ref{sumvn-bd})}. We divide $\EE \sum_{n=m}^{\infty} v_n^{-}$ into three parts. If $ n^{\frac{9}{10}} \leq \|S_n\|^2 \leq n \log^2 n$, then $\|S_n\|^2+ n^{\frac{4}{5}}\leq n( \log^2 n+1)$ and thus
$$
\begin{aligned}
  \frac{\|S_n\|^2}{\|S_n\|^2+n^{\frac{4}{5}} }-\frac{\log (\|S_n\|^2+n^{\frac{4}{5}})}{ \log n}&\geq 1-\frac{n^{\frac{4}{5}}}{\|S_n\|^2+n^{\frac{4}{5}} } - \frac{\log (n( \log^2 n+1))}{ \log n}\\
  &\geq -\frac{1}{n^{\frac{1}{10}}}-\frac{\log ( \log^2 n+1) }{\log n}
\end{aligned}
$$
whence we have
\begin{equation}
  \label{vn-sn910}
\EE \sum_{n=m}^{\infty}v_n^{-} \mathds{1}_{\{n^{\frac{9}{10}} \leq \|S_n\|^2 \leq n \log^2 n\}} \leq \sum_{n=m}^{\infty} \frac{1}{n\log (n+1)} \left(\frac{1}{n^{\frac{1}{10}}}+\frac{\log ( \log^2 n+1) }{\log n}\right)<\infty
\end{equation}
By Chebyshev's inequality and Corollary \ref{expeSn2} (which implies that $\EE \|S_n\|^2 \sim n \log n$, and will be proved later), for some constant $c_4$, we have
$$
\PP(\|S_n\|^2 > n \log^2 n) \leq  \frac{\EE \|S_n\|^2}{n \log^2 n} \leq \frac{c_4}{\log n}, \quad \forall n\geq m
$$
Observe that $\|S_n\|^2+n^{\frac{4}{5}}\leq 2n^2 \leq n^3$ for any $n>1$. Then,
\begin{equation}
  \label{vn-snlog2n}
\EE \sum_{n=m}^{\infty}v_n^{-} \mathds{1}_{\{\|S_n\|^2 > n \log^2 n\}} \leq \sum_{n=m}^{\infty}\frac{3\PP(\|S_n\|^2 > n \log^2 n)}{n\log (n+1)}   \leq \sum_{n=m}^{\infty} \frac{3c_4}{n \log^2 n} <\infty 
\end{equation}
For all large $n$ with $\|S_n\|^2 < n^{\frac{9}{10}}$,
$$
  \frac{n^{\frac{4}{5}}}{(\|S_n\|^2+n^{\frac{4}{5}})^2}-\frac{\log (\|S_n\|^2+n^{\frac{4}{5}})}{n \log n} >   \frac{n^{\frac{4}{5}}}{(n^{\frac{9}{10}}+n^{\frac{4}{5}})^2} -\frac{\log (n^{\frac{9}{10}}+n^{\frac{4}{5}})}{n\log n}  > 0
$$
which implies that there exists a positive constant $K$ such that for any $n\geq K$,
\begin{equation}
  \label{vnSnleqn910}
v_n \mathds{1}_{\{\|S_n\|^2 < n^{\frac{9}{10}}\}}\geq 0
\end{equation}
Now (\ref{sumvn-bd}) follows from  (\ref{vn-sn910}), (\ref{vn-snlog2n}) and (\ref{vnSnleqn910}).
\end{proof}

Now we are ready to prove Theorem \ref{phasetranZ2MERW}.
\begin{proof}[Proof of Theorem \ref{phasetranZ2MERW}]
  Proposition \ref{z258logn} and Proposition \ref{Lnot0MERW} imply that $S$ is transient for $p\geq 5/8$. Now we assume that $p<5/8$. Fix a positive constant $s\in (a,1/2)$. Set $n_1=1$. We define inductively a sequence of stopping times, for $k>1$,
$$
n_k=\inf\{n>n_{k-1}: \|S_{n} \|\leq n^{s},\ |\frac{b_{n}(i)}{n}-\frac{1}{2}| \leq \frac{1}{n^{\frac{1}{3}}},\ \forall i=1,2\}
$$
with the convention that $\inf \emptyset=\infty$. By Lemma \ref{ratecon1overd} and Proposition \ref{Snsio}, almost surely, $n_k<\infty$ for all $k\geq 1$. Again, we use the Lyapunov function $f(x)=\sqrt{\log \|x\|}$. By (\ref{2dEfdiff}),  
\begin{equation}
 \label{fsnrecsqrtlog}
 \begin{aligned}
   &\quad\ \mathbb{E}\left[f\left(S_{n+1}\right)-f\left(S_n\right) \mid \FF_n\right] \\ 
   &\leq \frac{-1}{2\|S_n\|^2 \log ^{3/2}\|S_n\|}\left(\frac{1}{10}-\log \|S_n\|\sum_{i=1}^2|\frac{b_n(i)}{n}-\frac{1}{2}|-\frac{a\|S_n\|^2\log \|S_n\|}{n}\right)
 \end{aligned}
\end{equation}
 if $S_n \notin B(0,r)$ for some large $r$. For each $n_k$, we define the stopping times $$\tau_k:=\inf\{j\geq n_k: S_j \in B(0,r) \}, \quad T_k:=\inf\{j \geq n_k:  \ \sum_{i=1}^2|\frac{b_j(i)}{j}-\frac{1}{2}|\geq \frac{1}{j^{1/4}}\}
 $$
 and choose $t\in (s,1/2)$, define $\theta_k:=\inf\{j \geq n_k: \|S_j\| \geq j^t \}$. Note that for $n_k + n< \theta_k$, 
 $$\frac{\|S_{n_k + n}\|^2\log \|S_{n_k + n}\|}{n_k + n} < \frac{\log (n_k + n)}{(n_k + n)^{1-2t}} < \frac{1}{20}$$
 if we assume that $k$ is large enough. Moreover, as in (\ref{lnSdiffsmalll}), we can show that if $n_k+n < \tau_k \wedge T_k \wedge \theta_k$, then 
 $$ \log \|S_{n_k + n}\|\sum_{i=1}^2|\frac{b_{n_k + n}(i)}{n_k + n}-\frac{1}{2}|< \frac{1}{20}$$
 Then, by (\ref{fsnrecsqrtlog}), $\{f(S_{(n_k+n)\wedge \tau_k \wedge T_k \wedge \theta_k})\}_{n\in \NN}$ is a non-negative supermartingale and thus converges a.s.. By the law of the iterated logarithm for MERWs (\ref{ltilSdiff}), $\theta_k<\infty$ a.s.. Thus, 
 $$\lim_{n\to \infty}f(S_{(n_k+n)\wedge \tau_k \wedge T_k  \wedge \theta_k}) = f(S_{\theta_k}) \quad \text{on}\ \{\tau_k=\infty\} \cap \{T_k=\infty\}$$ 
 Then by the optional stopping theorem, see e.g. Theorem 16, Chapter V \cite{dellacherie1982probabilities},
 $$
 \begin{aligned}
    \sqrt{s}\sqrt{\log n_k} &\geq \EE (f(S_{n_k  })|\FF_{n_k}) \geq \EE (f(S_{\theta_k})\mathds{1}_{\{\tau_k=\infty\} \cap \{T_k=\infty\}}|\FF_{n_k}) \\
    &\geq \EE (\sqrt{\log \theta_k^{t}}\mathds{1}_{\{\tau_k=\infty\} \cap \{T_k=\infty\}}|\FF_{n_k}) \geq \sqrt{t} \sqrt{\log n_k} \PP(\{\tau_k=\infty\} \cap \{T_k=\infty\}|\FF_{n_k})
 \end{aligned}
 $$
 Therefore, for all large $k$,
 \begin{equation}
   \label{inequtkcapTk}
     \PP(\{\tau_k=\infty\} \cap \{T_k=\infty\}|\FF_{n_k}) \leq \sqrt{\frac{s}{t}}
 \end{equation}
  Now we show that 
  \begin{equation}
    \label{TkinfFnk1}
    \lim_{k\to\infty} \PP( T_k=\infty|\FF_{n_k}) = 1
  \end{equation}
 Recall that in the proof of Lemma \ref{ratecon1overd}, we wrote $\eta_n:=\frac{b_n(1)}{n}-\frac{1}{2}$ with a slight abuse of notation. By (\ref{etanformula}), we get
 \begin{equation}
  \label{etankformula}
  \eta_n=\beta_n(\frac{\eta_{n_k}}{\beta_{n_k}}+ \sum_{j=n_k}^{n-1}\frac{\gamma_j}{\beta_{j+1}}\epsilon_{j+1}), \quad n\geq n_k
 \end{equation}
Then by (\ref{betanasy}) and the choice of $n_k$, there exists a constant $C_1$ not depending on $k$ and $n$ such that for any $n\geq n_k$, 
 \begin{equation}
   \label{firtermetanksmall}
    n^{\frac{1}{4}}\frac{\beta_n|\eta_{n_k}|}{\beta_{n_k}}\leq C_1 \frac{n^{\frac{1}{4}}}{n_k^{\frac{1}{3}}} (\frac{n_k}{n})^{\frac{4(1-p)}{3}}\leq C_1 \frac{n^{\frac{1}{4}}}{n_k^{\frac{1}{3}}} (\frac{n_k}{n})^{\frac{1}{2}} \leq C_1\frac{n_k^{\frac{1}{2}-\frac{1}{3}}}{n^{\frac{1}{2}-\frac{1}{4}}} \leq \frac{C_1}{n^{\frac{1}{12}}} 
 \end{equation}
 which is less than $1/4$ if $k$ is large enough. As in (\ref{etannnu}), if $p<5/8$, for $r\geq 1$, conditional on $\FF_{n_k}$, by Burkholder’s inequality, 
\begin{equation}
  \label{estressumgambeta}
  \PP(\beta_n|\sum_{j=n_k}^{n-1}\frac{\gamma_j}{\beta_{j+1}}\epsilon_{j+1}|\geq \frac{1}{4n^{\frac{1}{4}}}) \leq \frac{C_2(r,p) }{n^{\frac{r}{4}}}, \quad \forall n\geq n_k
\end{equation}
where the constant $C_2(r,p)$ does not depend on $k$ and $n$. Conditional on $\FF_{n_k}$, by (\ref{estressumgambeta}) with $r>4$, we have
\begin{equation}
  \label{Tkn14infi}
  \PP(\bigcup_{n\geq n_k} \beta_n|\sum_{j=n_k}^{n-1}\frac{\gamma_j}{\beta_{j+1}}\epsilon_{j+1}|\geq \frac{1}{4n^{\frac{1}{4}}}) \leq \sum_{n\geq n_k}\PP(\beta_n|\sum_{j=n_k}^{n-1}\frac{\gamma_j}{\beta_{j+1}}\epsilon_{j+1}|\geq \frac{1}{4n^{\frac{1}{4}}})  \to 0, 
\end{equation}
as $k\to \infty$. By (\ref{etankformula}), (\ref{firtermetanksmall}) and (\ref{Tkn14infi}), 
$$ \lim_{k\to \infty} \PP(n^{\frac{1}{4}}|\frac{b_n(1)}{n}-\frac{1}{2}|<\frac{1}{2}, \ \forall n\geq n_k |\FF_{n_k}) = 1$$  
We can prove a similar result for $\frac{b_n(2)}{n}-\frac{1}{2}$, which completes the proof of (\ref{TkinfFnk1}). By (\ref{inequtkcapTk}) and (\ref{TkinfFnk1}), for any $p<5/8$, we can find positive constants $K$ and $c$ such that for all $k\geq K$, $
\PP(\tau_k<\infty|\FF_{n_k}) > c$. Fix $k\geq K$, then for any $m\geq k$,
$$
\PP(\tau_k<\infty|\FF_{n_m}) \geq \PP(\tau_m<\infty|\FF_{n_m}) > c>0
$$
whence we have $\tau_k<\infty$ a.s. by Levy's 0-1 law. This shows that almost surely for any $k\geq K$, $S$ will return to $B(0,r)$ after time $n_k$. Thus, $S$ is recurrent by Proposition \ref{merw01}. 
\end{proof}

\section{Some related results}
\label{someotherre}
\subsection{Estimates on the expected exit times}
\label{estexpexit}

We provide some estimates of the exit times of MERWs on $\ZZ^d$. Recall $a$ from  (\ref{superlimY}).

\begin{lemma}
  \label{MartS2}
  Let $S$ be a MERW on $\ZZ^d$ with parameter $p$. Let $i_a=2$ if $a=-1/2$, $i_a=3$ if $a=-1$, and $i_a=1$ otherwise. For $n\geq i_a$, define
  $$
\gamma_n = \left \{ \begin{aligned} &\prod_{i=1}^{n-1} (1+\frac{2a}{i})  && \text{if } a\neq -1/2 \ \text{or} -1\\ &\frac{1}{n-1} && \text{if } a=-1/2 \\ &\frac{2}{(n-1)(n-2)} && \text{if } a=-1 \end{aligned} \right.
$$
  with the convention that $\gamma_1=1$ if $i_a=1$. Define
  $$
M_n:=\frac{\|S_n\|^2}{\gamma_n} -\sum_{i=i_a}^n \frac{1}{\gamma_i}, \quad n\geq i_a; \quad N_n := \|S_n\|^2-n-2a \sum_{\ell=1}^{n-1} \frac{\|S_{\ell}\|^2}{\ell}, \quad n\geq 1
  $$
   with the convention that $N_1=0$. Then $(M_n)_{n\geq i_a}$ and $(N_n)_{n\geq 1}$ are martingales with $\EE M_{i_a}=0$. 
\end{lemma}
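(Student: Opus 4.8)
The plan is to reduce everything to a single conditional second-moment identity, after which $(M_n)$ and $(N_n)$ are the same computation written in two different gauges. The starting point is the expansion $\|S_{n+1}\|^2=\|S_n\|^2+2S_n\cdot\sigma_{n+1}+\|\sigma_{n+1}\|^2$. Since each step $\sigma_{n+1}$ is a unit vector $\pm e_i$, we have $\|\sigma_{n+1}\|^2=1$, and Lemma~\ref{computationlem}(ii) gives $\EE(S_n\cdot\sigma_{n+1}\mid\FF_n)=a\|S_n\|^2/n$. Combining these yields the recursion
\[
\EE\bigl(\|S_{n+1}\|^2\mid\FF_n\bigr)=\Bigl(1+\tfrac{2a}{n}\Bigr)\|S_n\|^2+1,\qquad n\geq 1,
\]
and all of the martingale content is contained in this one line. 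Integrability is never an issue: the walk takes unit steps, so $\|S_n\|\leq n$ and hence $M_n$, $N_n$ are bounded for each fixed $n$.

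For $(N_n)$ I would simply subtract. From the definition, $N_{n+1}-N_n=\|S_{n+1}\|^2-\|S_n\|^2-1-2a\|S_n\|^2/n$; taking conditional expectation and inserting the recursion makes the right-hand side vanish identically, so $(N_n)_{n\geq1}$ is a martingale. For $(M_n)$ the same subtraction gives
\[
\EE\bigl(M_{n+1}-M_n\mid\FF_n\bigr)=\|S_n\|^2\Bigl(\tfrac{1+2a/n}{\gamma_{n+1}}-\tfrac{1}{\gamma_n}\Bigr),
\]
where the constant term $+1$ in the recursion produces $1/\gamma_{n+1}$, which cancels exactly the $-1/\gamma_{n+1}$ coming from the increment $\sum_{i=i_a}^{n+1}-\sum_{i=i_a}^{n}$ of the deterministic sum. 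Thus the martingale property is precisely the statement that $\gamma_n$ satisfies $\gamma_{n+1}=\gamma_n(1+2a/n)$.

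I would verify this recursion for $\gamma_n$ case by case. In the generic case $\gamma_n=\prod_{i=1}^{n-1}(1+2a/i)$ satisfies it by construction. The point of the three-way split is that the factor $1+2a/i$ vanishes at $i=1$ when $a=-1/2$ and at $i=2$ when $a=-1$, so the naive product would be $0$ and $\|S_n\|^2/\gamma_n$ ill-defined; starting the product at $i_a$ and telescoping then gives $\prod_{i=2}^{n-1}(1-1/i)=1/(n-1)$ when $a=-1/2$ and $\prod_{i=3}^{n-1}(1-2/i)=2/((n-1)(n-2))$ when $a=-1$, and one checks directly that both obey $\gamma_{n+1}=\gamma_n(1+2a/n)$. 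This bookkeeping around the exceptional values is the only genuinely delicate point, though it is routine once the vanishing factors are recognized.

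Finally, for $\EE M_{i_a}=0$, I would note that $\gamma_{i_a}=1$ in all three cases (check $\gamma_1=1$, $\gamma_2=1/(2-1)=1$, $\gamma_3=2/(2\cdot1)=1$), so that after removing the single-term sum $M_{i_a}=\|S_{i_a}\|^2-1$, and it remains to show $\EE\|S_{i_a}\|^2=1$. In the generic case $\|S_1\|^2=1$ deterministically; in the exceptional cases, iterating the recursion above from $\|S_1\|^2=1$ gives $\EE\|S_2\|^2=2+2a=1$ when $a=-1/2$, and $\EE\|S_2\|^2=2+2a=0$ followed by $\EE\|S_3\|^2=0\cdot(1+a)+1=1$ when $a=-1$. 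In every case $\EE M_{i_a}=1/\gamma_{i_a}-1/\gamma_{i_a}=0$, completing the argument.
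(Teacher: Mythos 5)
Your proof is correct and takes essentially the same route as the paper: both rest on the single recursion $\EE(\|S_{n+1}\|^2\mid\FF_n)=(1+\tfrac{2a}{n})\|S_n\|^2+1$ obtained from Lemma~\ref{computationlem}(ii). Your explicit verification of $\gamma_{n+1}=\gamma_n(1+2a/n)$ in the exceptional cases $a=-1/2,-1$ and of $\EE M_{i_a}=0$ simply fills in what the paper dismisses with ``from which one can easily deduce the assertions.''
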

\begin{proof}
By Lemma \ref{computationlem} (ii), we have
  \begin{equation}
    \label{proMnmartequ}
    \EE (\|S_{n+1}\|^2|\FF_n) =\|S_n\|^2+1+ 2\EE (S_{n}\cdot \sigma_{n+1}|\FF_n) =(1+\frac{2a}{n})\|S_n\|^2+1
  \end{equation}
from which one can easily deduce the assertions. 
\end{proof}

  Using Lemma \ref{MartS2}, one can easily find the rate of growth of $\EE \|S_n\|^2$. 

  \begin{corollary}
    \label{expeSn2}
    Let $S$ be a MERW with parameter $p$. Then,
    \begin{equation}
      \label{asyEsn2}
         \EE \|S_n\|^2 \sim \left \{ \begin{aligned} &\frac{n}{1-2a}  && \text{if } p<p_d \\ & n \log n && \text{if } p=p_d \\ & \frac{n^{2a}}{(2a-1)\Gamma(2a)} && \text{if } p>p_d \end{aligned} \right.
    \end{equation}
  \end{corollary}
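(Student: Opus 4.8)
The plan is to reduce the whole statement to the exact solution of the first-order linear recursion for $u_n:=\EE\|S_n\|^2$ and then read off the three asymptotics. Taking expectations in the martingale relation $\EE M_n=\EE M_{i_a}=0$ of Lemma \ref{MartS2} (equivalently, taking expectations in (\ref{proMnmartequ}), which gives $u_{n+1}=(1+\tfrac{2a}{n})u_n+1$ with $u_1=1$) yields at once the closed form $u_n=\gamma_n\sum_{i=i_a}^n\gamma_i^{-1}$. Thus everything comes down to estimating $\gamma_n$ and the partial sums $\sum_i\gamma_i^{-1}$, and the three regimes of the corollary are exactly the three regimes $a<\tfrac12$, $a=\tfrac12$, $a>\tfrac12$, i.e. $p<p_d$, $p=p_d$, $p>p_d$.

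For the generic case $a\neq\tfrac12,-\tfrac12,-1$ we have $i_a=1$, $\gamma_1=1$, and $\gamma_n=\prod_{i=1}^{n-1}\frac{i+2a}{i}$, so $\log|\gamma_n|=2a\log n+O(1)$ and, when $a>-\tfrac12$, the sharp form $\gamma_n=\Gamma(n+2a)/(\Gamma(n)\Gamma(1+2a))\sim n^{2a}/\Gamma(1+2a)$. The key simplification is a telescoping identity: setting $a_i:=i/((2a-1)\gamma_i)$ and using $\gamma_{i+1}=\frac{i+2a}{i}\gamma_i$, a one-line computation gives $\gamma_{i+1}^{-1}=a_i-a_{i+1}$. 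Summing and adding $\gamma_1^{-1}=1$ turns the closed form above into the exact identity
$$u_n=\frac{2a\gamma_n-n}{2a-1},\qquad a\neq\tfrac12 .$$
From this the two non-critical regimes fall out immediately. When $a<\tfrac12$ (so $2a<1$ and $\gamma_n=o(n)$) the $\gamma_n$-term is negligible and $u_n\sim -n/(2a-1)=n/(1-2a)$. When $a>\tfrac12$ (so $\gamma_n\gg n$) the opposite term dominates and $u_n\sim \frac{2a}{2a-1}\gamma_n\sim\frac{2a}{(2a-1)\Gamma(1+2a)}n^{2a}=\frac{n^{2a}}{(2a-1)\Gamma(2a)}$, using $\Gamma(1+2a)=2a\Gamma(2a)$.

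The remaining values are handled directly. The critical case $a=\tfrac12$ ($p=p_d$) is the cleanest: here $\gamma_n=\prod_{i=1}^{n-1}(1+\tfrac1i)=n$ telescopes exactly, whence $u_n=n\sum_{i=1}^n i^{-1}\sim n\log n$. The two degenerate values $a=-\tfrac12$ and $a=-1$ (which arise only for small $d$ and lie in the subdiffusive regime $p<p_d$) are treated with the explicit $\gamma_n$ and starting index $i_a$ of Lemma \ref{MartS2}: the sums $\sum_i\gamma_i^{-1}$ telescope to polynomials, giving $u_n=n/2$ and $u_n=n/3$, consistent with $n/(1-2a)$.

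The only genuinely delicate point is pinning down the exact constant $1/((2a-1)\Gamma(2a))$ in the superdiffusive regime, which amounts to evaluating $\sum_{i=1}^\infty\gamma_i^{-1}=\tfrac{2a}{2a-1}$; the telescoping identity above does this cleanly, and as a cross-check one may note the Beta-integral evaluation $\sum_{i\geq1}\gamma_i^{-1}=2a\int_0^1(1-t)^{2a-2}\,dt=\tfrac{2a}{2a-1}$ valid for $2a>1$. Everything else is routine bookkeeping with the ratio asymptotic for $\Gamma$.
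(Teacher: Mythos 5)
Your proof is correct, and it shares the paper's starting point --- Lemma \ref{MartS2} (equivalently, taking expectations in (\ref{proMnmartequ})) to get $\EE\|S_n\|^2=\gamma_n\sum_{i=i_a}^n\gamma_i^{-1}$, plus the Gamma-ratio asymptotic $\gamma_n\sim n^{2a}/\Gamma(1+2a)$ --- but you finish the computation differently, and more cleanly. The paper estimates the partial sums $\sum_{i\leq n}\gamma_i^{-1}$ regime by regime: comparison with $\sum i^{-2a}$ when $a<1/2$, with the harmonic sum when $a=1/2$, and an exact evaluation of the convergent series $\sum_{i\geq 1}\gamma_i^{-1}=\frac{2a}{2a-1}$ via the Beta integral $\sum_i B(2a,i)=\int_0^1(1-x)^{2a-2}dx$ when $a>1/2$. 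Your telescoping identity $\gamma_{i+1}^{-1}=a_i-a_{i+1}$ with $a_i=i/((2a-1)\gamma_i)$ instead produces the exact closed form
\begin{equation*}
\EE\|S_n\|^2=\frac{2a\gamma_n-n}{2a-1},\qquad a\neq \tfrac12,
\end{equation*}
which is finite-$n$ exact, subsumes the paper's Beta-function evaluation (letting $n\to\infty$ when $a>1/2$ recovers $\sum_i\gamma_i^{-1}=\frac{2a}{2a-1}$, since then $n/\gamma_n\to 0$), and makes both non-critical regimes immediate from a single asymptotic for $\gamma_n$; it also explains why the degenerate values $a=-1/2,-1$ (which indeed occur only for $d=1$, at $p=1/4$ and $p=0$) give the exact answers $n/2$ and $n/3$. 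The one place your shortcut cannot be used is the critical case $a=1/2$ (division by $2a-1$), which you correctly treat separately via $\gamma_n=n$ and the harmonic sum, exactly as the paper does. So: same skeleton, but your exact solution of the recursion replaces three separate asymptotic estimates with one identity --- a genuine simplification, at no loss of rigor.
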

  \begin{remark}
    The corresponding results for the one-dimensional case were derived by Schütz and Trimper in \cite{schutz2004elephants}.
  \end{remark}
  \begin{proof}
    If $a\neq -1/2$ or $-1$, $\gamma_n$ in Lemma \ref{MartS2} equals 
\begin{equation}
  \label{gammandefGam}
\gamma_n=\frac{(1+2a)\Gamma(n+2a)}{\Gamma(2+2a) \Gamma(n)} \sim \frac{1+2a}{\Gamma(2+2a)}n^{2a}
\end{equation}
by the properties of the Euler Gamma function. Thus,
    $$
   \lim_{n \to \infty} \frac{\sum_{i=1}^n \frac{1}{\gamma_i}}{n^{1-2a}} =\frac{\Gamma(2+2a)}{(1-2a)(1+2a)}, \ \text{if} \ p<p_d, \quad   \lim_{n \to \infty}  \frac{\sum_{i=1}^n \frac{1}{\gamma_i}}{\log n} = 1, \ \text{if} \ p=p_d
    $$
    If $p>p_d$, by (\ref{gammandefGam}), the properties of the Beta function $B(\cdot,\cdot)$ and the dominated convergence theorem, we have
    $$
        \sum_{i=1}^{\infty} \frac{1}{\gamma_i}=\sum_{i=1}^{\infty}\frac{2a\Gamma(2a) \Gamma(i)}{\Gamma(i+2a)}=2a\sum_{i=1}^{\infty} B(2a,i) =2a\sum_{i=1}^{\infty}\int_0^1 x^{i-1}(1-x)^{2a-1}dx=\frac{2a}{2a-1}
  $$
  Combined with Lemma \ref{MartS2}, these results prove (\ref{asyEsn2}) when $a\neq -1/2$ or $-1$. The cases $a= -1/2$ ($\gamma_n=1/(n-1)$) and $a=-1$ ($\gamma_n=2/[(n-1)(n-2)]$) are proved similarly.
  \end{proof}

 We may estimate the expected exit times of $S$ by Lemma \ref{MartS2}. 
  \begin{proposition}
    \label{estexpectMERWhit}
    Let $S$ be a MERW on $\ZZ^d$ with parameter $p\in [0,1]$. For $m\geq 1$, let $\zeta_m$ be the exit time from $B(0,m)$ as in (\ref{exittimedef}). 
    \begin{enumerate}[topsep=0pt, partopsep=0pt, leftmargin=5pt, align=left, label=(\roman*)]
 \item For any $p\in [0,1]$, we have $\EE \zeta_m \leq 6(m+1)^2$ 
 \item If $p<p_d$, then there exists a positive constant $C(p,d)$ such that for any $m\geq 1$,
    $$
     \EE \zeta_m \geq C(p,d) m^2
    $$
    \item If $p=p_d$, then there exists a positive constant $C(d)$ such that for any $m\geq 1$,
  $$
  \EE \zeta_m\cdot \log \EE\zeta_m  \geq C(d) m^2
  $$
  \item If $p>p_d$, then there exists a positive constant $C(p,d)$ such that for any $m\geq 1$,
  $$
     \EE \zeta_m\geq C(p,d) m^{\frac{1}{a}}
    $$
    \end{enumerate} 
\end{proposition}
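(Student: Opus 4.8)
The plan is to derive all four estimates from the one-step identity (\ref{proMnmartequ}), namely $\EE(\|S_{n+1}\|^2\mid\FF_n)=(1+\tfrac{2a}{n})\|S_n\|^2+1$, together with the growth of $\EE\|S_n\|^2$ from Corollary \ref{expeSn2}. Two elementary facts will be used throughout: first, $|a|\le 1$ for every $p\in[0,1]$; second, a nearest-neighbour step changes $\|S\|^2$ by at most $2\|S_n\|+1$, so $\|S_{n\wedge\zeta_m}\|^2<(m+1)^2$ for all $n$, while $\|S_{\zeta_m}\|^2\ge m^2$ at the exit.

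For (i) I would run a drift argument. On $\{n<\zeta_m\}$ we have $\|S_n\|^2<m^2$, so by (\ref{proMnmartequ}) and $|a|\le1$ the conditional increment of $\|S_n\|^2$ is at least $1-\tfrac{2m^2}{n}$, which is $\ge\tfrac12$ once $n\ge n_0:=\lceil 4m^2\rceil$. Hence $Y_n:=\|S_{n\wedge\zeta_m}\|^2-\tfrac12(n\wedge\zeta_m)$ is a submartingale for $n\ge n_0$; comparing $\EE Y_n\ge \EE Y_{n_0}\ge-\tfrac12 n_0$ with $\EE\|S_{n\wedge\zeta_m}\|^2<(m+1)^2$ gives $\EE(n\wedge\zeta_m)\le 2(m+1)^2+n_0$, and letting $n\to\infty$ by monotone convergence yields $\EE\zeta_m\le 6(m+1)^2$. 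In particular $\EE\zeta_m<\infty$, which I will invoke to justify the limits below.

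The lower bounds (ii)--(iv) all reduce, in the regime $a\ge0$, to a single observation: by (\ref{proMnmartequ}), $\|S_n\|^2$ is then a nonnegative submartingale, so Doob's maximal inequality gives
$$\PP(\zeta_m\le n)=\PP\Big(\max_{k\le n}\|S_k\|^2\ge m^2\Big)\le \frac{\EE\|S_n\|^2}{m^2}.$$
Corollary \ref{expeSn2} bounds $\EE\|S_n\|^2$ by $C(p,d)\,n$ when $p<p_d$, by $C(d)\,n\log n$ when $p=p_d$, and by $C(p,d)\,n^{2a}$ when $p>p_d$, for all $n\ge2$. Choosing $n$ so that the right-hand side equals $\tfrac12$ forces $\PP(\zeta_m>n)\ge\tfrac12$, whence $\EE\zeta_m\ge n/2$. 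For (ii) this produces $n\asymp m^2$; for (iv), $n\asymp m^{1/a}$. For (iii) I would take $n$ maximal with $C(d)\,n\log n\le m^2/2$, so $\EE\zeta_m\ge n/2\ge1$, and since $x\mapsto x\log x$ is increasing, $\EE\zeta_m\log\EE\zeta_m\ge\tfrac n2\log\tfrac n2\ge\tfrac14 n\log n\ge C(d)m^2$. The leftover case $a\le0$ in (ii) is even simpler: (\ref{proMnmartequ}) shows $\|S_n\|^2-n$ is a supermartingale, so optional stopping combined with $\EE\zeta_m<\infty$ gives $m^2\le\EE\|S_{\zeta_m}\|^2\le\EE\zeta_m$.

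The routine technical points are the passages to the limit $n\to\infty$ in the stopped (super/sub)martingales, which I would justify using the a priori bound $\EE\zeta_m<\infty$ from (i) together with the uniform estimate $\|S_{n\wedge\zeta_m}\|^2<(m+1)^2$ (dominated convergence on the left, monotone convergence for $\EE(n\wedge\zeta_m)$). The genuinely delicate step is the universal upper bound (i) in the subdiffusive regime $a<0$, where the drift of $\|S_n\|^2$ is negative for $n\lesssim m^2$: isolating the threshold $n_0\asymp m^2$ beyond which the drift becomes $\ge\tfrac12$ is exactly what makes the explicit constant $6$ come out, and is the place I would be most careful.
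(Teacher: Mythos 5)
Your proposal is correct, and for the lower bounds (ii)--(iv) it takes a genuinely different route from the paper. In part (i) the core idea coincides with the paper's --- once $n\geq 4m^2$, the conditional drift of $\|S_n\|^2$ is at least $\tfrac12$ per step while the walk is still inside $B(0,m)$, since $|a|\leq 1$ --- but your bookkeeping differs in a useful way: by running the stopped submartingale $Y_n$ over bounded horizons and letting $n\to\infty$, you obtain $\EE\zeta_m<\infty$ as a by-product, whereas the paper must first prove $\EE\zeta_m<\infty$ by a separate irreducibility estimate (from any point of $B(0,m)$ the walk exits within $m$ steps with probability at least $c^m$, as in the proof of Proposition \ref{merw01}) before it can apply optional sampling at $\zeta_m$ to the martingale $N_n$ of Lemma \ref{MartS2}. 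For (ii)--(iv) in the regime $a\geq 0$ the arguments genuinely diverge: the paper stops the rescaled martingale $M_n=\|S_n\|^2/\gamma_n-\sum_{i}1/\gamma_i$ (with $\gamma_n\asymp n^{2a}$) at $\zeta_m$ and then applies Jensen's inequality to a suitable convex function of $\zeta_m$, while you use Doob's maximal inequality for the nonnegative submartingale $\|S_n\|^2$ together with the growth of $\EE\|S_n\|^2$ from Corollary \ref{expeSn2}, choosing $n$ at the scale where $\EE\|S_n\|^2\approx m^2/2$. Both are sound; your version is more elementary (no auxiliary martingale, no Jensen step) and makes the origin of the scales $m^2$ in (ii), $m^2$ up to logarithms in (iii), and $m^{1/a}$ in (iv) transparent as \emph{the time at which the mean square displacement reaches $m^2$}, while the paper's version stays entirely inside the optional-stopping framework it has already built and never needs a maximal inequality. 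The $a\leq 0$ half of (ii) is the same supermartingale argument in both. Two routine points you should spell out: Corollary \ref{expeSn2} is only an asymptotic equivalence, so you need to upgrade it to bounds of the form $\EE\|S_n\|^2\leq C(p,d)\,n$, $C(d)\,n\log n$, or $C(p,d)\,n^{2a}$ valid for all $n\geq 2$ (immediate, since $\EE\|S_n\|^2\leq n^2<\infty$ for each $n$ and the ratio converges); and in (iii) the chain $\EE\zeta_m\log\EE\zeta_m\geq \tfrac{n}{2}\log\tfrac{n}{2}\geq\tfrac14 n\log n\geq C(d)m^2$ requires $n\geq 4$, i.e.\ $m$ large, with the finitely many small $m$ absorbed into the constant --- noting that, exactly as for the paper's own proof, the stated inequality in (iii) is degenerate at $m=1$, where $\EE\zeta_1=1$.
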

\begin{proof}
  We first show that $\EE \zeta_m<\infty$. If $S_{nm}=x\in B(0,m)$, then we can find a path of length less than $m$ from $x$ to a vertex $y$ with $\|y\|\geq m$, say $x_0=x,x_1,\cdots,x_k =y$. From the proof of Proposition \ref{merw01}, we see that for any $n\in \NN$,
  $$
 \PP(S_{nm+1}=x_1,S_{nm+2}=x_2\cdots,S_{nm+k}=x_k|\FF_{nm}) \mathds{1}_{\{S_{nm}=x\}}\geq c^k\mathds{1}_{\{S_{nm}=x\}} \geq c^m\mathds{1}_{\{S_{nm}=x\}}
  $$
  where $c\in (0,1)$ is a constant depending on $m$ but not on $x$. Thus,
$$\PP(\zeta_m>(n+1)m|\zeta_m>nm)\leq 1-c^m, \quad \forall n \in \NN$$ 
which implies that $\EE \zeta_m<\infty$. 

(i) We assume that $\zeta_m>4m^2$ and $S_{4m^2}=x \in B(0,m)$. Recall $\{N_n\}_{n\geq 1}$ from Lemma \ref{MartS2}. Then by Lemma \ref{MartS2} and Doob's optional sampling theorem, see e.g. Section 10.10 \cite{williams1991probability}, noting that $\{N_{ n\wedge \zeta_m}\}_{n\geq 1}$ is a martingale with bounded differences and using that for any $\ell \in [4m^2, \zeta_m-1]$,
$$
\frac{2a}{\ell}\|S_{\ell}\|^2 \geq -\frac{2 m^2}{4m^2}= -\frac{1}{2}
$$
we deduce that
\begin{equation}
  \label{Mmoptionalp}
  (m+1)^2-\|x\|^2-\frac{1}{2}(\EE \zeta_m-4m^2)\geq \EE (N_{\zeta_m}-N_{4m^2})=0
\end{equation}
and thus $\EE(\zeta_m|\FF_{4m^2})\leq 6(m+1)^2$ which yields the inequality in (1).

(ii) If $p\leq 1/(2d)$, then $a\leq 0$. Applying Doob's optional sampling theorem to $\{N_{ n\wedge \zeta_m}\}_{n\geq 1}$, we see that  
$$
\EE \zeta_m \geq  \EE \|S_{\zeta_m}\|^2 \geq m^2
$$
 
Now assume that $1/(2d)<p<p_d$, and in particular, $\{\gamma_n\}$ in Lemma \ref{MartS2} is lower bouned by 1. Recall $\{M_n\}_{n\geq 1}$ defined in Lemma \ref{MartS2}, then similarly,  $\{M_{ n\wedge \zeta_m}\}_{n\geq 1}$ is a martingale with bounded differences. By (\ref{gammandefGam}), there are positive constants $C_1(p,d)$ and $C_2(p,d)$  such that 
 \begin{equation}
  \label{c1c2gamman}
  C_1(p,d)n^{2a} \leq \gamma_n \leq C_2(p,d)n^{2a}, \quad \forall n\geq 1
 \end{equation}   
By (\ref{c1c2gamman}) and Doob's optional sampling theorem,  we can find two positive constants $C_3(p,d)$ and $C_4(p,d)$ such that for any $m\geq 1$,
  \begin{equation}
    \label{plesspdc1c2Doob}
    \EE \frac{C_3(p,d) m^2-C_4(p,d) \zeta_m}{\zeta_m^{2a}}\leq  \EE \left(\frac{m^2}{\gamma_{\zeta_m}} - \sum_{i=1}^{\zeta_m} \frac{1}{\gamma_i}\right) \leq \EE \left(\frac{\|S_{\zeta_m}\|^2}{\gamma_{\zeta_m}} - \sum_{i=1}^{\zeta_m} \frac{1}{\gamma_i}\right)=0
  \end{equation}
  By Jensen's inequality, 
  $$
  \frac{C_3(p,d) m^2-C_4(p,d)\EE \zeta_m}{(\EE\zeta_m)^{2a}} \leq \EE \frac{C_3(p,d)m^2-C_4(p,d)\zeta_m}{\zeta_m^{2a}} \leq 0, \quad \text{i.e.}\quad \EE \zeta_m\geq \frac{C_3(p,d) m^2}{C_4(p,d)}
  $$
  
(iii) and (iv): Similarly as in (\ref{plesspdc1c2Doob}), by (\ref{c1c2gamman}) and Doob's optional sampling theorem, we can find positive constants $C_5(p,d), C_6(p,d), C_7(p,d)$ and $C_8(p,d)$ such that for any $m\geq 1$,
$$
\EE \frac{C_5(p,d) m^2-C_6(p,d) \zeta_m \log \zeta_m}{\zeta_m}\leq 0, \ \text{if}\ p=p_d; \quad \EE \frac{C_7(p,d) m^2-C_8(p,d) \zeta_m^{2a}}{\zeta_m^{2a}}\leq 0, \ \text{if}\ p>p_d 
$$
It remains to apply Jensen's inequality.
\end{proof}

\subsection{Improved estimates on the rate of escape}
\label{secimprorate}

We give improved estimates on the rate of escape for MERWs on $\ZZ^d$ with $d\geq 3$ and $p\geq 1/(2d)$. We use the estimates in Proposition \ref{estexpectMERWhit} and adapt the technique used in the proof of Theorem 3.10.1 \cite{menshikov2016non}.

\begin{proof}[Proof of Proposition \ref{rateescapMERW2d}]
  The case $p=1$ is trivial. We assume that $p\in [1/(2d),1)$. Define $f(x)=\|x\|^{-1/2}$ for $x\in \ZZ^d \backslash \{0\}$. Then, for $e=e_i$, $i=1,2,\cdots,d$, 
$$
f(x+e)-f(x)=\|x\|^{-1/2}\left(\left(1+\frac{2 x \cdot e+1}{\|x\|^2}\right)^{-1/ 4}-1\right)
$$
  Using Taylor's expansion, as $\|x\|\to \infty$, we have
$$
  f(x+e)-f(x) =-\frac{1}{2}\frac{1}{\|x\|^{\frac{5}{2}}}\left(x \cdot e+\frac{1}{2}-\frac{5}{4} \frac{(x \cdot e)^2}{\|x\|^2}+O\left(\|x\|^{-1}\right)\right)
$$
Then, by Lemma \ref{computationlem} (ii), (iii), for $d\geq 3$,
$$
  \begin{aligned}
    \mathbb{E}\left[f\left(S_{n+1}\right)-f\left(S_n\right) \mid \FF_n\right] &=\frac{-1}{2\|S_n\|^{5/2}}\left(\frac{a\|S_n\|^{2}}{n}+\frac{1}{2}-\frac{5}{4}\frac{\EE([S_n \cdot \sigma_{n+1}]^2|\FF_n)}{\|S_n\|^2}+O\left(\|S_n\|^{-1}\right)\right) \\
    &\leq -\frac{1}{2\|S_n\|^{5/2}}\left(\frac{1}{2}-\frac{5}{4d}-\frac{5}{4}\sum_{i=1}^d |\frac{b_n(i)}{n}-\frac{1}{2}|+O\left(\|S_n\|^{-1}\right)\right) \\
    &\leq -\frac{1}{2\|S_n\|^{5/2}}\left(\frac{1}{20}-\frac{5}{4}\sum_{i=1}^d |\frac{b_n(i)}{n}-\frac{1}{2}|\right)
  \end{aligned}
$$
if $S_n \notin B(0,r)$ for some large $r$. Recall $\zeta_m$ defined in (\ref{exittimedef}). For $m_2>m_1 \in \NN\backslash \{0\} $, let $\lambda_{m_2, m_1}:=\inf \left\{n \geq \tau_{m_2}: \|S_n\| \leq m_1\right\}$ and for $k\geq 1$, let
$$
T_k:=\inf\{j \geq k:  \ \sum_{i=1}^d|\frac{b_j(i)}{j}-\frac{1}{2}|\geq \frac{1}{j^{1/4}}\}
$$ 
We assume that $m_1>r$, then by possibly choosing a larger $r$, we see that
$$
\{f(S_{(\tau_{m_2}+n) \wedge T_{\tau_{m_2}}\wedge \lambda_{m_2, m_1}})\}_{n\in \NN}
$$
is a non-negative supermartingale. Then, similarly as in the proof of (\ref{inequtkcapTk}), by the optional stopping theorem for non-negative supermartingales, for any $m_1>r$, we have
\begin{equation}
  \label{lammim2bd}
  \begin{aligned}
   \frac{1}{\sqrt{m_2}}&\geq f(S_{\tau_{m_2}}) \geq \EE (f(S_{\lambda_{m_2, m_1}})\mathds{1}_{\{\lambda_{m_2, m_1}<\infty\} \cap \{T_{\tau_{m_2}}=\infty\}}|\FF_{\tau_{m_2}}) \\
   &\geq  \frac{1}{\sqrt{m_1}}\mathbb{P}\left(\lambda_{m_2, m_1}<\infty, T_{\tau_{m_2}}=\infty \mid \mathcal{F}_{\tau_{m_2}}\right) 
\end{aligned}
\end{equation}
For any $x\in \RR_+$, define
$$
\eta_x:=\sup \left\{n \geq 0: \|S_n\| \leq x\right\}
$$
Note that $\eta_x$ is not a stopping time. Observe that by (\ref{lammim2bd})
\begin{equation}
  \label{3101etaT}
  \begin{aligned}
  \mathbb{P}\left(\eta_{m_1}>n, T_{\tau_{m_2}}=\infty\right) &\leq \mathbb{P}(\eta_{m_1}>n, \tau_{m_2}\leq n, T_{\tau_{m_2}}=\infty)+\mathbb{P}(\tau_{m_2}> n, T_{\tau_{m_2}}=\infty) \\
  &\leq  \mathbb{P}(\lambda_{m_2, m_1}<\infty, T_{\tau_{m_2}}=\infty)+\mathbb{P}(\tau_{m_2}>n)\\ &\leq \sqrt{\frac{m_1}{m_2}}+\frac{6(1+m_2)^2}{n}
\end{aligned}
\end{equation}
where we used Proposition \ref{estexpectMERWhit} and Markov's inequality in the last inequality. Fix $\varepsilon>0$ and set
$$
m_1(k)=2^k,\quad m_2(k)=2^k(\log 2^k)^{2+\varepsilon}, \quad  n(k)=4^k(\log 2^k)^{5+4 \varepsilon}
$$
Then, for all $k$ sufficiently large, by (\ref{3101etaT})
$$
\mathbb{P}(\eta_{2^k}>n(k),T_{\tau_{m_2(k)}}=\infty) \leq \frac{1}{(k \log 2)^{1+\frac{1}{2} \varepsilon}} +\frac{12}{(k \log 2)^{1+\varepsilon}}
$$
So $\sum_{k \in \NN} \mathbb{P}\left[\eta_{2^k}>n(k),T_{\tau_{m_2(k)}}=\infty\right]<\infty$. The Borel-Cantelli lemma shows that, a.s., $\{\eta_{2^k}>n(k),T_{\tau_{m_2(k)}}=\infty\}$, $k=1,2,3\cdots $, occur finitely often. By Lemma \ref{ratecon1overd}, a.s. $\{T_{\tau_{m_2(k)}}<\infty\}$, $k=1,2,3\cdots $, occur finitely often. Therefore, $\eta_{2^k} \leq n\left(k\right)$ for all but finitely many $k \in \NN_{+}$. Thus, almost surely, for all $x \in \RR_+$ sufficiently large, 
$$
\eta_x \leq \eta_{2^{\lfloor \log_2 x \rfloor+1}} \leq n\left( \lfloor \log_2 x \rfloor+1 \right) \leq 4x^2 (\log (2x))^{5+4 \varepsilon} \leq x^2(\log x)^{5+5\varepsilon}
$$
where the floor function $\lfloor \cdot \rfloor: \RR \to \ZZ$ is defined by $\lfloor y\rfloor=\max \{m \in \mathbb{Z} \mid m \leq y\}$. Since $\|S_n\| \rightarrow \infty$ by Theorem \ref{MERWtran12d}, we deduce that, a.s., for all but finitely many $n$, 
\begin{equation}
  \label{nSnlogSn5}
  n \leq \eta_{\|S_n\|} \leq \|S_n\|^2\left(\log \|S_n\|\right)^{5+5\varepsilon}
\end{equation}
We may choose $\varepsilon< 1/5$. If $\|S_n\| \leq n^{\frac{1}{2}}(\log n)^{-3}$, then 
$$
\|S_n\|^2\left(\log \|S_n\|\right)^{5+5\varepsilon} \leq \frac{n (\log n)^{5+5\varepsilon} }{(\log n)^{6}} 
$$
which can occur for only finitely many $n$ in view of (\ref{nSnlogSn5}).
\end{proof}

\subsection{The Berry-Esseen type bounds for the ERW}
\label{rateconclteswp0}

Let $(S_n)_{n\geq 0}$ be an elephant random walk with parameter $p\in [0,1]$. For $n\geq 1$, we let 
\begin{equation}
  \label{mnhatandef}
  \bar{S}_n:=a_n S_n, \quad \text{where}\ a_n:= \prod_{k=1}^{n-1}\frac{k}{k+2 p-1} =\frac{\Gamma(n) \Gamma(2 p)}{\Gamma(n+2 p-1)}
\end{equation}
with the convention that $a_1=1$ (and thus $\bar{S}_1=1$). Then by (\ref{Esigman1}), $(\bar{S}_n)_{n\geq 1}$ is a martingale, as was pointed out by Bercu in \cite{bercu2017martingale}. Note that $a_n \sim \Gamma(2p)n^{1-2p}$. To prove Theorem \ref{cltp}, we need the following auxiliary lemma.
\begin{lemma}
  \label{estimatean}
  There exists a positive constant $C(p)$ such that for any $n\geq 1$
  \begin{equation}
    \label{andiffn12p}
    |a_{n+1}-\Gamma(2p)n^{1-2p}| \leq \frac{C(p)}{n^{2p}}
  \end{equation}
  Moreover, for $p<3/4$, resp. $p=3/4$, there exists a positive constant $C_1(p)$, resp. a positive constant $C$ such that
  \begin{equation}
    \label{nan2bd}
    |\frac{na_n^2}{3-4p} - \sum_{k=1}^na_k^2| \leq C_1(p)n^{\max(2-4p,0)}, \quad \text{resp.}\quad |na_n^2\log n - \sum_{k=1}^na_k^2| \leq C
  \end{equation}
\end{lemma}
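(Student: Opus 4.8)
The plan is to treat the three estimates separately, exploiting the exact gamma-function representation $a_{n+1} = \Gamma(2p)\,\Gamma(n+1)/\Gamma(n+2p)$ together with discrete summation-by-parts; throughout I assume $p>0$ so that $\Gamma(2p)$ and all the $a_n$ are finite, which is the regime in which the statement makes sense. For \eqref{andiffn12p} I would invoke the standard asymptotic expansion of a ratio of gamma functions, $\Gamma(n+1)/\Gamma(n+2p) = n^{1-2p}\bigl(1 + O(1/n)\bigr)$ as $n\to\infty$ (this follows from Stirling's formula, or self-containedly from summing $\log(1+(2p-1)/k)$ and identifying the constant against the exact formula). Multiplying by $\Gamma(2p)$ turns the $O(1/n)$ correction into an error of order $n^{1-2p}\cdot n^{-1}=n^{-2p}$, which gives \eqref{andiffn12p} for all large $n$; since both sides are finite for each of the finitely many remaining small $n$, the constant $C(p)$ can simply be enlarged to cover all $n\geq 1$.

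For the first bound in \eqref{nan2bd} (the case $p<3/4$) I would use the exact recursion $a_{n+1}=a_n\,n/(n+2p-1)$ and telescope the auxiliary sequence $b_n:=n a_n^2$. A direct computation gives
\begin{equation*}
  b_{n+1}-b_n = a_n^2\cdot\frac{n^2(3-4p)-n(2p-1)^2}{(n+2p-1)^2} = (3-4p)a_n^2 + r_n,
\end{equation*}
where, using $n^2/(n+2p-1)^2 = 1 + O(1/n)$ and $a_n^2=O(n^{2-4p})$, the remainder satisfies $|r_n|\leq C(p)\,n^{1-4p}$. Summing over $k$ and rearranging expresses
\begin{equation*}
  \frac{n a_n^2}{3-4p} - \sum_{k=1}^n a_k^2 = -a_n^2 + \frac{1}{3-4p}\Bigl(a_1^2 + \sum_{k=1}^{n-1} r_k\Bigr).
\end{equation*}
Here $a_n^2=O(n^{2-4p})$, while $\sum_{k=1}^{n-1}|r_k|\leq C(p)\sum_{k=1}^{n-1}k^{1-4p}$ is of order $n^{2-4p}$ when $p<1/2$ and of order $1$ when $1/2<p<3/4$; in both cases the total is $O(n^{\max(2-4p,0)})$, as claimed. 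The exceptional value $p=1/2$ is immediate, since then $2p-1=0$ forces $a_n\equiv 1$ and the left side is identically zero.

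The second bound in \eqref{nan2bd} (the case $p=3/4$) is where the main difficulty lies, because $3-4p=0$ makes the previous telescoping degenerate, so $b_n=n a_n^2$ no longer tracks $\sum a_k^2$. To recover the logarithm I would instead telescope $c_n:=n a_n^2\log n$. With $2p-1=\tfrac12$ and $a_{n+1}=a_n\,n/(n+\tfrac12)$, the identity $(n+1)n^2/(n+\tfrac12)^2 = n - \tfrac{n/4}{(n+\frac12)^2}$ yields
\begin{equation*}
  c_{n+1}-c_n = a_n^2\, n\log\!\bigl(1+\tfrac1n\bigr) - a_n^2\,\frac{(n/4)\log(n+1)}{(n+\frac12)^2} = a_n^2 + \tilde r_n,
\end{equation*}
and since $n\log(1+1/n)=1+O(1/n)$ while $a_n^2=O(1/n)$ at $p=3/4$, the remainder obeys $|\tilde r_n|\leq C\,(\log n)/n^2$, which is summable. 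Telescoping and rearranging then bounds $|c_n-\sum_{k=1}^n a_k^2|$ by a constant, i.e. $|n a_n^2\log n-\sum_{k=1}^n a_k^2|\leq C$.

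The routine parts are the Taylor/Stirling expansions and the bookkeeping of summing $r_k$; the genuinely delicate points, which I would highlight, are the degeneracy at $p=3/4$ that forces the switch to the logarithmic auxiliary sequence $c_n$, and checking that the single exponent $n^{\max(2-4p,0)}$ is matched exactly across the three subranges $p<1/2$, $p=1/2$, and $1/2<p<3/4$ (the value $p=1/2$ being the only place where the crude $|r_n|\lesssim n^{1-4p}$ estimate is lossy, and there handled trivially since $a_n\equiv 1$). Uniform validity down to small $n$ is obtained throughout by absorbing finitely many terms into the constants.
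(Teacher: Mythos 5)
Your proof is correct, and for \eqref{andiffn12p} it coincides with the paper's: the paper likewise obtains $a_{n+1}=\Gamma(2p)n^{1-2p}(1+O(1/n))$ from Stirling's asymptotic series, and your restriction to $p>0$ is no loss (at $p=0$ both $\Gamma(2p)$ and $a_n$ for $n\geq 2$ are undefined, so the statement only makes sense for $p>0$). For \eqref{nan2bd}, however, you take a genuinely different route. The paper squares the first estimate to get $|a_{n+1}^2-\Gamma(2p)^2n^{2-4p}|\leq C_2(p)n^{1-4p}$ and then asserts \eqref{nan2bd} follows easily; filling that in requires, in addition, the power-sum comparisons $\sum_{k\leq n}k^{2-4p}=n^{3-4p}/(3-4p)+O(n^{\max(2-4p,0)})$ for $p<3/4$ and $\sum_{k\leq n}k^{-1}=\log n+O(1)$ for $p=3/4$, together with the corresponding expansion of $na_n^2/(3-4p)$, resp.\ $na_n^2\log n$. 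Your telescoping of $b_n=na_n^2$, resp.\ $c_n=na_n^2\log n$, via the exact recursion $a_{n+1}=a_n\,n/(n+2p-1)$ avoids all power-sum asymptotics: the increment is exactly $(3-4p)a_n^2+r_n$, resp.\ $a_n^2+\tilde r_n$, and only summability of the explicit remainders is needed, for which $a_n^2=O(n^{2-4p})$ from the first part suffices. I checked your algebra: $(n+1)n^2-n(n+2p-1)^2=n^2(3-4p)-n(2p-1)^2$, the identity $(n+1)n^2/(n+\tfrac{1}{2})^2=n-(n/4)/(n+\tfrac{1}{2})^2$, and the bounds $|r_n|\leq C(p)n^{1-4p}$ and $|\tilde r_n|\leq C\log(n+1)/n^2$ all hold; your separate treatment of $p=1/2$ (where $a_n\equiv 1$ makes the left side vanish identically) is precisely the caveat the paper also needs and records in its remark. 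The trade-off: the paper's argument is shorter on the page but leaves the critical-case bookkeeping implicit, while yours is fully self-contained and makes the origin of the logarithm at $p=3/4$ transparent, as it emerges from $n\log(1+1/n)\to 1$ in the telescoped increment rather than from the harmonic sum.
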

\begin{remark}
  If $p=1/2$, $a_n \equiv \Gamma(2p)=1$.
\end{remark}
\begin{proof}
  By Stirling’s asymptotic series, see e.g. Section VII \cite{spiegel2018mathematical}, for $x>0$,
\begin{equation}
  \label{Stirlingasy}
   \Gamma(x+1)=\sqrt{2 \pi x} x^x e^{-x}(1+O(\frac{1}{x}))
\end{equation}
By (\ref{Stirlingasy}) and Taylor expansion, for any $n\geq 1$,
$$
\begin{aligned}
  &\quad n^{2p-1}|a_{n+1}-\Gamma(2p)n^{1-2p}| =\Gamma(2p) \left|\frac{ \sqrt{n}n^{n+2p-1} e^{-n}(1+O(\frac{1}{n})) }{ \sqrt{n+2p-1}(n+2p-1)^{n+2p-1} e^{-n-2p+1}(1+O(\frac{1}{n})) }-1\right| \\
  &=\Gamma(2p)\left|\sqrt{\frac{1}{1+\frac{2p-1}{n}}}\exp(n(1+\frac{2p-1}{n})\log (\frac{1}{1+\frac{2p-1}{n}})+2p-1) -1 +O(\frac{1}{n})\right| \leq \frac{C(p)}{n}
\end{aligned}
$$
for some positive constant $C(p)$, which completes the proof of (\ref{andiffn12p}). Thus, for any $p\in [0,1]$, there exists a positive constant $C_2(p)$ such that for any $n\geq 1$,
\begin{equation}
  \label{an2bd}
  |a_{n+1}^2-\Gamma(2p)^2n^{2-4p}|\leq C_2(p)n^{1-4p}
\end{equation}
from which one can easily deduce (\ref{nan2bd}).
\end{proof}

\begin{proof}[Proof of Theorem \ref{cltp}]
 Let $\bar{S}$ be the martingale defined in (\ref{mnhatandef}). For any $p\in [0,3/4]$, by Theorem 3 (or Remark 2) in \cite{MR4646949} (note that one can apply the same argument there to prove the case $p=0$) and the Lipschitz property of $\Phi$, we can find a positive constant $C_3(p)$ such that for any $n\geq 1$, 
 \begin{equation}
  \label{barSnphibd}
     \sup_{t \subset \mathbb{R}}\left|\mathbb{P}\left(\frac{ \bar{S}_{n}}{\sqrt{ \sum_{k=1}^na_k^2}} \leq t\right)-\Phi(t)\right| \leq C_3(p) \left( \frac{1}{\sqrt{n}} + \frac{1}{\sqrt{\sum_{k=1}^na_k^2}}\right) 
 \end{equation}

 \textbf{Case 1:} $p<3/4$. By Lemma \ref{estimatean}, there exists a positive constant $C_4(p)$ such that the right-hand side of (\ref{barSnphibd}) is bounded by $C_4(p)n^{-\min(1,3-4p)/2}$ for all $n\geq 1$. For any $t\in \RR$, since $\bar{S}_n=a_nS_n$, by (\ref{barSnphibd}),
 \begin{equation}
  \label{34pSnphibd}
    \begin{aligned}
   |\PP(\frac{\sqrt{3-4p}S_n}{\sqrt{n}} \leq t)-\Phi(t)| &= |\PP(\frac{\bar{S}_n}{\sqrt{ \sum_{k=1}^na_k^2}} \leq \frac{\sqrt{\frac{n}{3-4p}}a_nt}{\sqrt{ \sum_{k=1}^na_k^2}})-\Phi(t)| \\
   &\leq \frac{C_4(p)}{n^{\min(1,3-4p)/2}} + |\Phi(\frac{\sqrt{\frac{n}{3-4p}}a_nt}{\sqrt{ \sum_{k=1}^na_k^2}})- \Phi(t)|
 \end{aligned}
 \end{equation}
 Now we provide an upper bound of the last term on the right-hand side. By (\ref{nan2bd}),
there exists a constant $C_5(p)$ such that for any $n\geq 1$,
 \begin{equation}
  \label{anMn1bd}
     |\frac{\sqrt{\frac{n}{3-4p}}a_n}{\sqrt{ \sum_{k=1}^na_k^2}}- 1|=|\frac{\frac{na_n^2}{3-4p}-\sum_{k=1}^na_k^2}{\sqrt{ \sum_{k=1}^na_k^2}(\sqrt{\frac{n}{3-4p}}a_n+\sqrt{ \sum_{k=1}^na_k^2})}| \leq \frac{C_5(p)}{n^{\min(1,3-4p)}}
 \end{equation}
 and thus, if $|t|\leq n^{\min(1,3-4p)/2}$,
 $$
 |\frac{\sqrt{\frac{n}{3-4p}}a_nt}{\sqrt{ \sum_{k=1}^na_k^2}}- t|\leq \frac{C_5(p)|t|}{n^{\min(1,3-4p)}} \leq \frac{C_5(p)}{n^{\min(1,3-4p)/2} }
 $$
 The Lipschitz property of $\Phi$ then implies that the last term in (\ref{34pSnphibd}) is upper bounded by $C_5(p)n^{-\min(1,3-4p)/2}$. If $|t|> n^{\min(1,3-4p)/2}$, the last term in (\ref{34pSnphibd}) is still $O(n^{-\min(1,3-4p)/2})$ uniformly in $t$ by the properties of $\Phi$ (i.e. $\Phi(-x) \to 0$ and $1-\Phi(x) \to 0$ exponentially fast as $x \to \infty$), which completes the proof of Case 1.

 \textbf{Case 2}: $p=3/4$. By Lemma \ref{estimatean} and (\ref{barSnphibd}), similarly as in (\ref{34pSnphibd}), for some constant $C_1$, we have for all $n>1$,
 \begin{equation}
   \label{Snphi34}
     |\PP(\frac{S_n}{\sqrt{n\log n}} \leq t)-\Phi(t)| \leq \frac{C_1}{\sqrt{\log n}} + |\Phi(\frac{\sqrt{n\log n}a_nt}{\sqrt{\sum_{k=1}^na_k^2 }})- \Phi(t)|
 \end{equation}
 Now by (\ref{nan2bd}), as in (\ref{anMn1bd}), one can show that there exists a constant $C_2$ such that for any $n> 1$ and $t\in \RR$,
 \begin{equation}
   \label{sqrtantMnt}
   |\frac{\sqrt{n\log n}a_nt}{\sqrt{\sum_{k=1}^na_k^2 }}- t|\leq \frac{C_3|t|}{\log n}
 \end{equation}
 The rest of the proof follows the same lines as that of Case 1: We consider the two cases $|t|\geq \sqrt{\log n}$ and $|t|\leq \sqrt{\log n}$, instead. For the former case, we use the exponential decay of $\Phi$. For the latter one, we use (\ref{sqrtantMnt}) and the Lipschitz property of $\Phi$. In either case, one can show that the last term in (\ref{Snphi34}) is $O((\log n)^{-1/2})$ uniformly in $t$. 
\end{proof}

\section{Acknowledgement}

I am very grateful to Professor Tarrès, my Ph.D. advisor, for inspiring the choice of this subject and for helpful discussions and advice. Thanks also to Zheng Fang for careful reading.

I would also like to thank LPSM at Sorbonne Université for hosting me during Spring 2023 when work on this paper was undertaken. The visit to Paris was supported by the NYU-ECNU Institute funding.

\bibliographystyle{plain}
\bibliography{math_ref}

\end{document}